 \let\mathcal\mathscr
 \DeclareRobustCommand{\SkipTocEntry}[5]{}
\def\bw#1#2{\textstyle{\bigwedge\hskip-0.9mm^{#1}}\hskip0.2mm{#2}}
\def\cha{characteristic}
\def\av{abelian variety}
\def\avs{abelian varieties}
\def\dra{\dashrightarrow}
\def\isom{\simeq}
\def\eps{\varepsilon}
\def\ie{\hbox{i.e.}}
\DeclareMathOperator{\isomto}{\stackrel{{}_{\scriptstyle\sim}}{\to}}
\DeclareMathOperator{\Card}{Card}
\DeclareMathOperator{\Br}{Br}
\DeclareMathOperator{\Disc}{Disc}
\DeclareMathOperator{\Gal}{Gal}
\DeclareMathOperator{\Gr}{Gr}
\DeclareMathOperator{\Tr}{Tr}
\DeclareMathOperator{\Id}{Id}
\DeclareMathOperator{\NS}{NS}
\DeclareMathOperator{\PicVar}{PicVar}
\DeclareMathOperator{\pr}{pr}
\DeclareMathOperator{\Sing}{Sing}
\DeclareMathOperator{\Sym}{Sym}
\def\llra{\hbox to 10mm{\rightarrowfill}}
\def\lllra{\hbox to 15mm{\rightarrowfill}}
\def\dra{\dashrightarrow}
\def\Z{{\bf Z}}
\def\P{{\bf P}}
\def\R{{\bf R}}
\def\Q{{\bf Q}}
\def\C{{\bf C}}
\def\F{{\bf F}}
\def\T{{\bf T}}
\def\eps{{\varepsilon}}
\def\kk{{\bf{k}}}
\def\mr{{\mathbf r}}
\def\phi{\varphi}
\def\gm{{\mathfrak m}}
\def\gp{{\mathfrak p}}
\def\gU{{\mathfrak U}}
\def\gS{{\mathfrak S}}
\def\gF{{\mathfrak F}}
\def\cE{{\mathcal E}}
\def\cO{{\mathcal O}}
\def\cS{{\mathcal S}}
\def\cX{{\mathcal X}}
\def\LL{{\mathbb L}}
\newtheorem{theo}{Theorem}[section]
\newtheorem{prop}[theo]{Proposition}
\newtheorem{lemm}[theo]{Lemma}
\newtheorem{coro}[theo]{Corollary}
\newtheorem{rema}[theo]{Remark}
\newtheorem{exam}[theo]{Example}
\begin{document}

\title{Lines on cubic hypersurfaces over finite fields}

\author[O.~Debarre]{Olivier Debarre}
\address{D\'epartement Math\'ematiques et Applications\\UMR CNRS 8553\\PSL Research University\\\'Ecole Normale Su\-p\'e\-rieu\-re\\45 rue d'Ulm, 75230 Paris cedex 05, France}
\email{{olivier.debarre@ens.fr}}
\urladdr{\url{http://www.math.ens.fr/~debarre/}}

\author[A.~Laface]{Antonio Laface}
\address{
Departamento de Matem\'atica,
Universidad de Concepci\'on,
Casilla 160-C,
Concepci\'on, Chile}
\email{alaface@udec.cl}

\author{Xavier Roulleau}
\address{Laboratoire de Math\'ematiques et Applications, Universit\'e de Poitiers, UMR CNRS 7348, T\'el\'eport 2 - BP 30179 - 86962 Futuroscope Chasseneuil, France
}
\email{xavier.roulleau@math.univ-poitiers.fr}
\urladdr{\url{http://www-math.sp2mi.univ-poitiers.fr/~roulleau/}}

\keywords{Cubic hypersurfaces, lines in hypersurfaces, finite fields, zeta functions, Tate conjecture, Fano varieties}

\subjclass[2010]{14G15, 14J70, 14F20, 14G10}
 
\thanks{
The second author was partially supported 
by Proyecto FONDECYT Regular N.\ 1150732
and Proyecto Anillo ACT 1415 PIA Conicyt.
}

 \begin{abstract}
We show that any smooth cubic hypersurface of dimension $n $ defined over a finite field $\F_q$ contains a line defined over   $\F_q$  in each of the following cases:
\begin{itemize}
\item $n=3$ and $q\ge 11$;
\item $n=4$ and $q\ne3$;
\item $n\ge 5$.
\end{itemize}
For a smooth cubic threefold $X$, the variety of lines contained in $X$ is a smooth projective surface $F(X)$ for which the Tate conjecture holds, and  we obtain information about the Picard number of $F(X)$ and the 5-dimensional principally polarized Albanese variety $A(F(X))$. 
 \end{abstract}

 \maketitle

\section{Introduction}

The study of rational points on hypersurfaces in the projective space defined over a finite field has a long history.\ Moreover, if $X\subset \P^{n+1}$ is a (smooth) cubic hypersurface, the (smooth) variety $F(X)$ parametrizing lines contained in $X$ is an essential tool for the   study of the geometry of $X$.\ Therefore, it seems natural to investigate $F(X)$ when $X$ is a cubic hypersurface    defined over a finite field $\F_q$ and the first question to ask is whether $X$ contains a line defined over $\F_q$.

One easily finds smooth cubic surfaces defined over $\F_q$ containing no $\F_q$-lines, with $q$ arbitrarily large.\ On the other hand, if $\dim(X)\ge 5$, the variety $F(X)$, when smooth, has ample anticanonical bundle and it follows from powerful theorems of Esnault and Fakhruddin--Rajan that $X$ always contains an $\F_q$-line (Section  \ref{5ormore}).\ So the interesting cases are when $\dim(X)=3$ or~$4$.

When $X$ is a smooth cubic threefold, $F(X)$ is a smooth surface of general type.\ Using a  formula of Galkin--Shinder which relates the number of $\F_q$-points on $F(X)$ with the number of $\F_q$- and $\F_{q^2}$-points on $X$   (Section \ref{segs}), we  find the zeta function of $F(X)$   (Theorem~\ref{main}).\ Using the Weil conjectures, we obtain that a smooth $X$ always contains $\F_q$-lines when $q\ge 11$ (Theorem~\ref{prop6}).\ Using a computer, we produce examples of smooth cubic threefolds containing no $\F_q$-lines for $q\in\{2,3,4,5\}$ (Section \ref{noli}), leaving only  the cases where $q\in\{7,8,9\}$ open, at least when $X$ is smooth.

Theorem \ref{main} can also be used for explicit computations of  the zeta function of $F(X)$.\ 
  For that, one needs to know the number of $\F_{q^r}$-points of $X$ for sufficiently many $r$.\ Direct computations are possible   for small $q$   or when $X$ has   symmetries (see Section \ref{exfermat} for   Fermat hypersurfaces, Section \ref{kleins} for the Klein threefold, and \cite{ked} for cyclic cubic threefolds).\ If $X$ contains an $\F_q$-line, it is in general faster to use the structure of conic bundle on $X$ induced by projection from this line, a method initiated by   Bombieri and Swinnerton-Dyer  in 1967 (Section~\ref{secbsd}).\ This is illustrated by an example in Section \ref{algos}, where we compute the zeta function of a cubic $X$ and of its Fano surface $F(X)$ in characteristics up to  $31$.\ In all these examples, once one knows the zeta function of $F(X)$,   the Tate conjecture (known in this case; see Remark \ref{tatec}) gives its Picard number.\ It is also easy to determine whether    its 5-dimensional Albanese variety $A(F(X))$ is simple, ordinary, supersingular...

Singular cubics tend to contain more lines (Example \ref{cex}).\ When $X$  is a cubic threefold with a single node, the geometry of $F(X)$ is closely related to that of a smooth genus-4 curve (\cite{cg}, \cite{kvdg}; see also \cite[Example 5.8]{gash}).\ Using the results of \cite{HLT} on pointless curves of genus~$4$, we prove that $X$ always contains $\F_q$-lines when $q\ge 4$ (Corollary \ref{coronodal}) and produce examples for $q\in\{2,3\}$ where $X$   contains no $\F_q$-lines
(Section \ref{nodal}).

When $X$ is a smooth cubic fourfold, $F(X)$ is a smooth fourfold with trivial canonical class.\ Using again the Galkin--Shinder formula, we compute the 
zeta function of $F(X)$  (Theorem \ref{main4})  
 and deduce from the Weil conjectures that $X$ contains an $\F_q$-line when $q\ge 5$ (Theorem \ref{th52}).\ 
 A trick using a theorem of Mazur based on crystalline cohomology (provided by K.~Kedlaya)   proves  that $X$ also contains an $\F_q$-line when $q$ is any power of $2$ (Proposition~\ref{ked}).\
 The case  $q=3$ is the only one left open;  calculations on a computer suggest that any cubic fourfold defined over $\F_3$ should contain an $\F_3$-line.

\subsection*{Acknowledgements}
 This collaboration   started  after a mini-course given in 2015 by the first author for the  CIMPA School  ``II Latin American School of Algebraic Geometry and Applications'' given in Cabo Frio, Brazil.\ We thank CIMPA for financial support and the organizers C.~Araujo and S.~Druel for making this  event successful.\
 Many thanks also to N.~Addington (who corrected an error in the original equation in Section~\ref{sec542}), 
 D.~Bragg (who corrected an error in  the original statement of Theorem~4.12), 
 F.~Charles, S.~Elsenhans, B.~van Geemen, F.~Han, E.~Howe, T.~Katsura, Ch.~Liedtke, and O.~Wittenberg for useful correspondences and conversations.\ Special thanks go to K.~Kedlaya, who spotted an error in the calculations in our original  proof of Theorem~\ref{th52} and explained how to fix it to  obtain an improved result; he also provided us with a proof of the existence of lines on smooth cubic fourfolds defined over $\F_4$ (Proposition~\ref{ked}) and kindly allowed us to include it in the present version of this article.\
 The computations made for this paper were done with the computer algebra programs {\tt Magma}~(\cite{bc}) and {\tt Sage}.

\section{Definitions and tools}

\subsection{The  Weil and Tate conjectures}\label{sedwc}

Let $\F_q$ be a finite field with $q$ elements and let $\ell$ be a prime number prime to $q$.

Let $Y$ be a   projective variety of dimension $n$ defined over $\F_q$.\ For every integer  $r\ge 1$,   set
$$N_r(Y):=\Card\bigl(Y(\F_{q^r})\bigr) 
$$
and define the   {\em zeta function}
$$Z(Y,T):=\exp\Bigl(\sum_{r\ge 1}N_r(Y)\frac{T^r}{r}\Bigr).
$$

Let $\overline{\F_q}$ be an algebraic closure of $\F_q$  and let $\overline{Y}$ be the  variety obtained from $Y$ by extension of scalars from $\F_q$ to $\overline{\F_q}$.\ The Frobenius morphism $F\colon \overline{Y}\to \overline{Y}$  acts  on the \'etale cohomology  $H^\bullet(\overline{Y} ,\Q_\ell)$ by a $\Q_\ell$-linear map which we denote by $F^*$.\  Grothendieck's Lefschetz Trace formula (\cite[Theorem 13.4, p.~292]{mil}) states that, for all integers $r\ge 1$, one has
\begin{equation}\label{grolef}
N_r(Y)=\sum_{0\le i\le 2n}(-1)^i\Tr\bigl(F^{*r},H^i(\overline{Y} ,\Q_\ell)\bigr).
\end{equation}
If $Y$ is moreover smooth, the   Weil conjectures proved by Deligne in \cite[Th\'eor\`eme (1.6)]{del} say that for each $i$, the (monic) characteristic polynomial 
$$Q_i(Y,T):=\det\bigl(T\Id-F^*,H^i(\overline{Y} ,\Q_\ell)\bigr)$$
has integral coefficients and is independent of $\ell$ (in particular, so is its degree  $b_i(Y):=h^i(\overline{Y} ,\Q_\ell)$, called the {\em $i$-th Betti number} of $Y$) and all the conjugates of its complex roots $\omega_{ij}$ have modulus~$q^{i/2}$.\ Poincar\'e duality implies $b_{2n-i}(Y)=b_i(Y)$ and $\omega_{2n-i,j}=q^n/\omega_{ij}$ for all $1\le j\le b_i(Y)$.

We can rewrite the trace formula \eqref{grolef} as
\begin{equation}\label{grolef2}
N_r(Y)=\sum_{0\le i\le 2n}(-1)^i\sum_{j=1}^{b_i(Y)}\omega_{ij}^r 
\end{equation}
or
\begin{equation}\label{zeta}
Z(Y,T)=\prod_{0\le i\le 2n} P_i(Y,T)^{(-1)^{i+1}}.
\end{equation}
Finally, it is customary   to introduce the polynomials
\begin{equation}\label{pi}
P_i(Y,T):=\det\bigl(\Id-TF^*,H^i(\overline{Y} ,\Q_\ell)\bigr)=T^{b_i(Y)}Q_i\Bigl(Y,\frac{1}{T}\Bigr)=\prod_{j=1}^{b_i(Y)}(1- \omega_{ij}T).
\end{equation}

{\em Whenever $i$ is odd,}  the real roots of $Q_i( Y,T)$ have even multiplicities (\cite[Theorem~1.1.(b)]{elja}), hence $b_i(Y)$ is even.
We can therefore assume   $\omega_{i,j+b_i(Y)/2}=\bar\omega_{ij}$ for all $1\le j\le  b_i(Y)/2$, or 
$T^{b_i(Y)}Q_i( Y,q^i/T)=q^{ib_i(Y)/2}Q_i( Y,T)$.\ If $m:={b_1(Y)/2}$, we will write
\begin{equation}\label{rec}
 Q_1(Y,T)=T^{2m}+a_1T^{2m-1}+\dots+ a_{m-1}T^{m+1}+a_mT^m+qa_{m-1}T^{m-1}+\dots+q^{m-1}a_1T+q^m.
\end{equation}

The Tate conjecture for divisors on $Y$ states that the $\Q_\ell$-vector space in $H^2\bigl(\overline{Y} ,\Q_\ell(1)\bigr)$ generated by  classes of $\F_q$-divisors   is equal to the space of $\Gal(\overline{\F_q}/\F_q)$-invariants classes and that its dimension is equal to the multiplicity of $q$ as a root of the polynomial $Q_2(Y,T)$ (\cite[Conjecture~2, p.~104]{tate2}).

\subsection{The Katz trace formula}\label{skatz}

Let $Y$ be   a proper scheme of dimension $n$ over $\F_q$.\ The endomorphism $f\mapsto f^q$ of  $\cO_Y$ induces an $\F_q$-linear endomorphism $\gF_q$  of the $\F_q$-vector space $H^\bullet(Y,\cO_Y)$ and for all $r\ge 1$, one has   (\cite{kat}, Corollaire 3.2) 
\begin{equation}\label{trka}
N_r(Y)\cdot 1_{\F_q}= \sum_{j=0}^n (-1)^j \Tr \bigl(\gF_q^r, H^j(Y,\cO_Y)\bigr)\qquad \hbox{in }\F_q. 
\end{equation}
 In particular,   the right side, which is a priori in $\F_q$, is actually in the prime subfield of $ \F_q$.
 
\subsection{The Galkin--Shinder formulas}\label{segs}

Let $X\subset \P^{n+1}_{\F_q}$ be a reduced cubic hypersurface   defined over  $\F_q$, with singular set   $\Sing(X)$. 

We let $F(X)\subset \Gr(1, \P^{n+1}_{\F_q})$ be the scheme of lines contained in $X$; it is also defined over~$\F_q$.\ When $n\ge 3$ and     $\Sing(X)$ is finite, $F(X)$ is a  local complete intersection of dimension $2n-4$, smooth if $X$ is smooth, and geometrically connected   (\cite[Theorem (1.3)   and Corollary (1.12)]{ak}).

In the Grothendieck ring of varieties over $\F_q$, one has   the   relation (\cite[Theorem 5.1]{gash})
\begin{equation}\label{gsfgrot}
\LL^2[F(X)]=[X^{(2)}]-(1 + \LL^n)[X]  +\LL^n[\Sing(X )], 
\end{equation}
where $X^{(2)}:=X^2/\gS_2$ is the symmetric square of $X$ and, as usual, $\LL$ denotes the class of the affine line.\ Together with the relation \cite[(2.5)]{gash}, it implies that, for all $r\ge 1$, we have (\cite[Corollary~5.2.3)]{gash})
\begin{equation}\label{gsf}
N_r\bigl(F(X)\bigr)=\frac{N_r(X)^2-2(1+q^{nr})N_r(X)+N_{2r}(X)}{2q^{2r}}+q^{ (n-2)r}N_r\bigl(\Sing(X)\bigr).
\end{equation}

\subsection{Abelian varieties over finite fields}\label{sso}

Let $A$ be an \av\  of dimension $n$ defined over a finite field $\F_q$ of \cha\ $p$ and let $\ell$ be a prime number prime to $p$.\ The $\Z_\ell$-module $H^1(\overline A,\Z_\ell)$ is free of rank $2n$ and there is an isomorphism 
\begin{equation}\label{wed}
\bw{\bullet}{H^1(\overline A,\Q_\ell)} \isomto H^\bullet(\overline A,\Q_\ell)
\end{equation}
of $\Gal(\overline{\F_q}/\F_q)$-modules.

An elliptic curve $E$ defined over $\overline{\F_q}$ is {\em supersingular} if its only $p$-torsion  point   is $0$.\ All
supersingular elliptic curves are isogenous.
The 
  \av\ $A$   is {\em supersingular} if $A_{\overline{\F_q}}$ is isogenous  to $E^n$, where $E$ is a supersingular elliptic curve (in particular,   any two supersingular \avs\ are isogenous over $\overline{\F_q}$).
The  following  conditions are equivalent (\cite[Theorems~110, 111, and 112]{Hu})
\begin{itemize}
 \item[(i)]  $A$  is supersingular;
\item[(ii)]  $Q_1(A_{\F_{q^r}},T)=(T\pm q^{r/2} )^{2n}$ for some $r\ge 1$; 
\item[(iii)]  $\Card\bigl(A(\F_{q^r} )\bigr)=(q^{r/2}\pm 1 )^{2n}$ for some $r\ge 1$;
\item[(iv)] each complex root of $Q_1(A,T)$ is $\sqrt{q}$ times a root of unity;
\item[(v)] in the notation of \eqref{rec}, if $q=p^r$, one has $p^{\lceil rj/2\rceil}\mid a_j$ for all $j\in\{1,\dots,n\}$.
\end{itemize}
If condition (ii) is satisfied, one has  $Q_2(A_{\F_{q^r}},T)=(T-q^r )^{n(2n-1)}$ and the Tate conjecture, which holds for divisors on abelian varieties, implies that the Picard number of $A_{\F_{q^r}}$, hence also the geometric Picard number of  $A$, is $n(2n-1)$, the maximal possible value.\ Conversely, when  $n>1$, if $A_{\F_{q^r}}$ has maximal Picard number for some $r$, the \av\ $A
$ is supersingular.

The abelian variety $A$ is {\em ordinary}  if it contains $p^n$ (the maximal possible number) $p$-torsion $\overline{\F_q}$-points.\ This is equivalent to the coefficient $a_n$ of $T^n$ in  $Q_1(A,T)$ being prime to $p$;
if this is the case,  $A$   is simple (over $\F_q$)  if and only if  the polynomial $Q_1(A,T)$ is  irreducible (see \cite[Section~2]{HZ}). 

\section{Cubic surfaces}

There exist smooth cubic surfaces defined over $\F_q$ containing no $\F_q$-lines, with $q$ arbitrarily large.\ This is the case for example for the diagonal cubics defined by
$$x_1^3+x_2^3+x_3^3+ax_4^3=0,$$
where $a\in \F_q$ is not a cube.\ If  $q\equiv 1\pmod3$, there is such an $a$, since  there are   elements of order~$3$ in $\F_q^\times$, hence the morphism $\F_q^\times\to \F_q^\times$, $x\mapsto x^3$ is not injective, hence not surjective.

\section{Cubic threefolds}\label{secuth}

\subsection{The zeta function of the surface of lines}

Let    $X\subset \P^4_{\F_q}$ be a {\em smooth} cubic hypersurface defined over $\F_q$.\ Its Betti numbers   
are $1$, $0$, $1$, $10$, $1$, $0$, $1$, and the eigenvalues of the Frobenius morphism acting on the 10-dimensional vector space $H^3(\overline X,\Q_\ell)$  
 are all divisible by $q$ as algebraic integers (\cite[Remark~5.1]{kat}).\ We can therefore write \eqref{grolef} as
$$N_r(X)=1+q^r+q^{2r}+q^{3r}-q^r\sum_{j=1}^{10}\omega_j^r,$$
where, by the Weil conjectures proved by Deligne (Section \ref{sedwc}), the complex algebraic integers~$\omega_j$ (and all their  conjugates) have modulus 
$\sqrt{q}$.\ The trace formula \eqref{zeta} reads
$$Z(X,T)=\frac{P_3(X,T)}{(1-T)(1-qT)(1-q^2T)(1-q^3T)},
$$
where $P_3(X,T) =\prod_{j=1}^{10}(1-q\omega_jT)$.\ If we set 
\begin{equation}\label{mr}
M_r(X):=
\frac1{q^r}\bigl(
N_r(X)-(1+q^r+q^{2r}+q^{3r})\bigr) 
=-\sum_{j=1}^{10}\omega_j^r,
\end{equation}
we obtain
\begin{equation}\label{p3}
P_3(X,T)=\exp \Bigl (\sum_{r\ge 1}M_r(X)\frac{(qT)^r}{r}\Bigr)
.
\end{equation}

We will show in Section \ref{secbsd} that the numbers $M_r(X)$ have  geometric significance.  

\begin{theo}\label{main}
Let    $X\subset \P^4_{\F_q}$ be a {\em smooth} cubic hypersurface defined over $\F_q$ and let $F(X)$ be the smooth surface of lines contained in $X$.\ With the notation \eqref{pi}, we have 
\begin{eqnarray*}
P_1(F(X),T)&=& P_3(X,T/q)=:\prod_{1\le j\le 10}(1-\omega_jT),\\
P_2(F(X),T)&=& \prod_{1\le j<k\le 10}(1-\omega_j\omega_kT),\\
P_3(F(X),T)&=& P_3(X,T )=\prod_{1\le j\le 10}(1-q\omega_jT),
\end{eqnarray*}
where the complex numbers $\omega_1,\dots,\omega_{10}$ have  modulus $\sqrt{q}$.\ In particular,  
\begin{equation}\label{zetafx}
Z(F(X),T)=\frac{\prod_{1\le j\le 10}(1-\omega_jT)\prod_{1\le j\le 10}(1-q\omega_jT)}{(1-T)(1-q^2T)\prod_{1\le j<k\le 10}(1-\omega_j\omega_kT)}
.\end{equation}
\end{theo}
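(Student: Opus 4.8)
The plan is to deduce everything from the Galkin--Shinder point-count formula \eqref{gsf} together with the Weil conjectures. The key input is that for a smooth cubic threefold $X\subset\P^4_{\F_q}$ one has $\Sing(X)=\vide$, so \eqref{gsf} becomes
$$
N_r(F(X))=\frac{N_r(X)^2-2(1+q^{nr})N_r(X)+N_{2r}(X)}{2q^{2r}}
$$
with $n=3$. Substituting the expression $N_r(X)=1+q^r+q^{2r}+q^{3r}-q^r\sum_j\omega_j^r$ from \eqref{mr}, i.e.\ $N_r(X)=1+q^r+q^{2r}+q^{3r}+q^rM_r(X)$ with $M_r(X)=-\sum_{j=1}^{10}\omega_j^r$, I would expand the right-hand side. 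The terms $1+q^r+q^{2r}+q^{3r}$ form a geometric-type expression, and after multiplying out, dividing by $2q^{2r}$, and cancelling, the answer should organize itself into four groups: a constant-type contribution, a $q^{2r}$ contribution, a contribution linear in the $\omega_j^r$ (with a factor $q^r$), and the quadratic contribution $\tfrac12\bigl((\sum_j\omega_j^r)^2+\sum_j\omega_j^{2r}\bigr)=\sum_{j\le k}\omega_j^r\omega_k^r$. That last identity — the Newton-type relation expressing $e_2$-like sums — is the small combinatorial heart of the computation.

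Concretely, I expect to arrive at
$$
N_r(F(X))=1+q^{2r}+q^r\sum_{j=1}^{10}\omega_j^r+\sum_{1\le j<k\le 10}\omega_j^r\omega_k^r
$$
(possibly after checking a few low-order terms to pin down signs). Here I must be careful with the $j=k$ diagonal: the quadratic part of $N_r(X)^2$ produces $q^{2r}\bigl(\sum_j\omega_j^r\bigr)^2$, and combining with $N_{2r}(X)$ contributes $-q^{2r}\sum_j\omega_j^{2r}$ (from the $-q^{2r}\sum\omega_j^{2r}$ hidden in $N_{2r}(X)$), so that $\tfrac{1}{2q^{2r}}$ times these two gives exactly $\sum_{j<k}\omega_j^r\omega_k^r$ with the diagonal removed. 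The cross terms between $\sum_j\omega_j^r$ and the polynomial part $1+q^r+q^{2r}+q^{3r}$, against the $-2(1+q^{3r})$ factor, should collapse to the single clean term $q^r\sum_j\omega_j^r$; similarly the purely polynomial terms must telescope to $1+q^{2r}$ using $(1+q^r+q^{2r}+q^{3r})^2-2(1+q^{3r})(1+q^r+q^{2r}+q^{3r})+(1+q^{2r}+q^{4r}+q^{6r})=2q^{2r}(1+q^{2r})$, which is a routine but essential algebraic identity I would verify once.

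Once the formula for $N_r(F(X))$ above is established for all $r\ge1$, I compare it with the trace formula \eqref{grolef2} for the smooth projective surface $F(X)$, whose Betti numbers are $b_0=b_4=1$, $b_1=b_3=10$, $b_2=\binom{10}{2}=45$ (the Betti numbers of the Fano surface are classical, but they also follow a posteriori from matching degrees). Writing $N_r(F(X))=\sum_{i=0}^4(-1)^i\sum_k\eta_{ik}^r$, the expression I derived forces, term by term in $r$: $H^0$ and $H^4$ contribute $1+q^{2r}$; $H^1$ contributes $-\sum(\text{eigenvalues})^r$ which must match $-q^r\sum_j\omega_j^r$ wait — sign bookkeeping here is the place to be most careful, since $H^1$ and $H^3$ enter \eqref{grolef2} with sign $-1$ while the displayed $N_r(F(X))$ has $+q^r\sum\omega_j^r$ and $+\sum_{j<k}\omega_j^r\omega_k^r$; I will need to re-identify which of $H^1,H^3$ carries $\{\omega_j\}$ and which carries $\{q\omega_j\}$, using the modulus constraint ($|\omega_j|=\sqrt q$ forces the $H^1$-eigenvalues, by the Weil bound $|\eta_{1k}|=q^{1/2}$, to be the $\omega_j$ themselves, and Poincaré duality $\eta_{3k}=q^2/\eta_{1k}=q\omega_j$ up to relabeling). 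The $H^2$-part then must be $\sum_{j<k}\omega_j^r\omega_k^r$ with eigenvalues $\omega_j\omega_k$, of modulus $q$, consistent with Weil. An appeal to the fact that the eigenvalue multiset is determined by all the power sums (equivalently, the characteristic polynomial is determined by \eqref{pi} as a power-series identity, cf.\ \eqref{p3}) then yields the three asserted formulas for $P_1,P_2,P_3$, and \eqref{zeta} gives \eqref{zetafx}. The main obstacle is purely organizational: correctly tracking the signs and the diagonal terms through the expansion of \eqref{gsf}, and justifying the identification of Frobenius eigenvalues across the different cohomological degrees via the Weil modulus bounds and Poincaré duality rather than by an actual geometric correspondence (though the latter, via the Abel--Jacobi/incidence correspondence, could alternatively be invoked).
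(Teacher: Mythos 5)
Your proposal is essentially the paper's own second proof: the paper itself notes that substituting the Galkin--Shinder formula \eqref{gsf} into the definition of $Z(F(X),T)$ directly yields \eqref{zetafx}, and your closing step (separating the eigenvalues into cohomological degrees by Deligne's modulus bounds and Poincar\'e duality, with the Betti numbers coming out of the match) is the natural way to make that one-line remark precise, whereas the paper's first proof instead uses the geometric isomorphisms $H^3(\overline X,\Q_\ell)\simeq H^1\bigl(\overline{F(X)},\Q_\ell(-1)\bigr)$ and $\bigwedge^2 H^1(\overline{F(X)},\Q_\ell)\simeq H^2(\overline{F(X)},\Q_\ell)$. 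One slip your own sign-check would catch: the expansion gives $N_r(F(X))=1+q^{2r}-(1+q^r)\sum_{j}\omega_j^r+\sum_{j<k}\omega_j^r\omega_k^r$, i.e., the cross terms collapse to $-(1+q^r)\sum_j\omega_j^r$ rather than $+q^r\sum_j\omega_j^r$, and it is exactly this coefficient $(1+q^r)$ that produces the two odd-degree factors $P_1(F(X),T)=\prod_j(1-\omega_jT)$ and $P_3(F(X),T)=\prod_j(1-q\omega_jT)$ with $b_1=b_3=10$.
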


\begin{proof} There are several ways to prove this statement.\ The first is to prove that there are isomorphisms
\begin{equation*} 
 H^3(\overline X,\Q_\ell)\isomto H^1\bigl(\overline{F(X)},\Q_\ell(-1)\bigr)\quad{\rm and}\quad  \bw{2}{H^1(\overline{F(X)},\Q_\ell)}\isomto H^2(\overline{F(X)},\Q_\ell)
\end{equation*}
of $\Gal(\overline{\F_q}/\F_q)$-modules.\ The first isomorphism  holds with $\Z_\ell$-coefficients: if we introduce the incidence variety $I=\{(L,x)\in F(X)\times X\mid x\in L\}$ with its projections $\pr_1\colon I\to F(X)$ and $\pr_2\colon I\to X$, it is given by $\pr_{1*}\pr_2^*$ (\cite[p.~256]{chpi}).\  
 The second isomorphism follows, by standard arguments using smooth and proper base change, from the analogous statement in singular cohomology, over $\C$, which is proven in 
   \cite[Proposition 4]{rou1}. 
   
 These isomorphisms (and Poincar\'e duality)  then imply the formulas for the polynomials $P_i(F(X),T)$ given in the theorem.
   
   Alternatively, simply substituting in the definition of $Z(F(X),T)$ the values for  $N_r(F(X))$ given by the 
   Galkin--Shinder formula \eqref{gsf} directly gives \eqref{zetafx}, from which one deduces the formulas for the polynomials $P_i(F(X),T)$.\end{proof}

\begin{rema}[The Tate conjecture for $F(X)$]\upshape\label{tatec}
The Tate conjecture for the surface $F(X)$ (see Section \ref{sedwc}) was proved in \cite{rou1} over any field $\kk$ of finite type over the prime field, {\em of \cha\ other than 2.} This last restriction can in fact be lifted as follows: the proof in \cite{rou1} rests  on  the following two facts 
\begin{itemize}
\item[a)]  $F(X)$ maps to  its (5-dimensional) Albanese variety $A(F(X))$ onto a surface with class a multiple of $\theta^3$, where $\theta$ is a principal polarization on $A(F(X))$;
\item[b)] $b_2(A(F(X)))=b_2(F(X))$.
\end{itemize}
Item a) is proved (in \cha\ $\ne 2$) 
  via the theory of Prym varieties (\cite[Proposition~7]{bea2}).
  For item b), we have $\dim(A(F(X)))=h^1(F(X),\cO_{F(X)})=5$ (\cite[Proposition (1.15)]{ak}), hence $b_2(A (F(X)))=\binom{2\dim(A(X))}{2}=45$, whereas $b_2(F(X))=\deg\bigl(P_2(F(X),T)\bigr)=45$ by Theorem~\ref{main}.
  
  To extend a) to all characteristics, we consider $X$ as the reduction modulo the maximal ideal~$\gm$ of a smooth cubic $\cX$ defined over a valuation ring of \cha\ zero.\ There is a ``difference morphism'' $\delta_{F(X)}\colon F(X)\times F(X)\to A(F(X))$, defined over $\kk$, which is the reduction modulo $\gm$ of the analogous morphism  $\delta_{F(\cX)}\colon F(\cX)\times F(\cX)\to A\bigl(F(\cX)\bigr)$.\ By  \cite[Proposition~5]{bea2}, the image of $\delta_{F(\cX)}$ is a divisor which defines a principal polarization $\vartheta$ on $A\bigl(F(\cX)\bigr)$, hence the image of $\delta_{F(X)}$ is also a principal polarization  on $A(F(X))$, defined over $\kk$. 
  
  Since the validity of the Tate conjecture is not affected by passing to a finite extension of $\kk$, we may assume that $F(X)$ has a $\kk$-point, which we lift to  $F(\cX)$.\ We can then define Albanese morphisms, and   $a_{F(X)}\colon F(X)\to A(F(X))$ is the reduction modulo $\gm$ of $a_{F(\cX)}\colon F(\cX)\to A(F(\cX))$.\  The image of $a_{F(\cX)}$ has class 
 $\vartheta^3/3!$ (\cite[Proposition 7]{bea2}), hence the image of $a_{F(X)}$ also has class  $(\vartheta\vert_{A(X)})^3/3!$ (this class is not divisible in $H^6(A(X),\Z_\ell)$, hence   $a_{F(X)}$ is generically injective).\ This proves a), hence the Tate conjecture for $F(X)$, in all characteristics.
 
Going back to the case where $\kk$ is finite, Theorem \ref{main} implies the equality $Q_2(F(X),T)= Q_2(A(F(X)),T)$.\ Since the Tate conjecture holds for divisors on abelian varieties, this proves that   $F(X)$ and $A(F(X))$ have the same Picard numbers, whose maximal possible value is 45.
\end{rema}
 
 \begin{coro}\label{pic}
Let $2m_\pm$ be the multiplicity of the root $\pm\sqrt{q}$ of $Q_1(F(X),T)$ and let $m_1,\dots, m_c$ be the multiplicities of the pairs of nonreal conjugate roots of $Q_1(F(X),T)$, so that $m_++m_-+ \sum_{i=1}^cm_i=5$.\ The Picard number of $F(X)$ is then
$$\rho(F(X))=m_+(2m_+-1)+m_-(2m_--1)+\sum_{i=1}^c m_i^2 
.$$
We have $\rho(F(X))\ge 5$, with equality if and only if $Q_1(F(X),T)$ has no multiple roots.

If $q$ is not a square, the  possible Picard numbers are all odd numbers between $5$ and $13$, and $17$ and $25$.

If $q$ is a square, the possible Picard numbers are all odd numbers between $5$ and $21$, and $25$, $29$, and $45$.\ We have  $\rho(F(X))=45 $ if and only if $Q_1(F(X),T)= (T\pm \sqrt q )^{10}$.
  \end{coro}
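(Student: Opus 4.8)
The plan is to read $Q_2(F(X),T)$ off Theorem~\ref{main}, invoke the Tate conjecture to identify $\rho(F(X))$ with the multiplicity of $q$ among its roots, and then carry out an elementary count. By Theorem~\ref{main} and \eqref{pi} one has
$$Q_2(F(X),T)=T^{45}P_2\bigl(F(X),1/T\bigr)=\prod_{1\le j<k\le 10}(T-\omega_j\omega_k),$$
where $\omega_1,\dots,\omega_{10}$ are the roots of $Q_1(F(X),T)$, all of modulus $\sqrt q$. By Remark~\ref{tatec} the Tate conjecture holds for $F(X)$, so $\rho(F(X))$ equals the multiplicity of $q$ as a root of $Q_2(F(X),T)$, i.e.\ the number of unordered pairs $\{j,k\}$, $j\ne k$, with $\omega_j\omega_k=q$.

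Next I would describe the multiset $\{\omega_1,\dots,\omega_{10}\}$: it is stable under complex conjugation (as $Q_1(F(X),T)\in\Z[T]$) and satisfies $\overline{\omega}_j=q/\omega_j$ (the functional equation \eqref{rec}, or Poincar\'e duality on $H^1$). Hence its real elements are $\pm\sqrt q$, with even multiplicities $2m_+$ and $2m_-$ by the result of \cite{elja} quoted in Section~\ref{sedwc} for odd $i$; writing the non-real roots as $c$ conjugate pairs with multiplicities $m_1,\dots,m_c$ gives $m_++m_-+\sum_{i=1}^c m_i=5$ (the degree of $Q_1(F(X),T)$ being $10$). If $q$ is not a square then $\sqrt q$ and $-\sqrt q$ are conjugate over $\Q$, so they occur with equal multiplicity and $m_+=m_-$. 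Using $\omega\overline{\omega}=q$ and the distinctness of the $c$ conjugate pairs, the products $\omega_j\omega_k$ ($j<k$) that equal $q$ are exactly $\sqrt q\cdot\sqrt q$ (there are $\binom{2m_+}{2}$ of these), $(-\sqrt q)(-\sqrt q)$ (there are $\binom{2m_-}{2}$), and $\alpha\cdot\overline{\alpha}$ with $\alpha$ in the $i$-th pair (there are $m_i^2$ for each $i$); summing yields $\binom{2m_+}{2}+\binom{2m_-}{2}+\sum_i m_i^2$, which is the asserted formula for $\rho(F(X))$.

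Finally I would extract the numerics. The bound $\rho(F(X))\ge 5$ is immediate from $m(2m-1)\ge m$, $m_i^2\ge m_i$ and $m_++m_-+\sum_i m_i=5$; the equality statement, and the fact that $\rho(F(X))=45$ forces $m_+=5$ or $m_-=5$ — hence $Q_1(F(X),T)=(T\pm\sqrt q)^{10}$, so $q$ must be a square and, by Section~\ref{sso}, $A(F(X))$ is supersingular — then follow by inspection of this formula. The two lists of possible values are obtained by tabulating $m_+(2m_+-1)+m_-(2m_--1)+\sum_i m_i^2$ over all ways of writing $5=m_++m_-+\sum_i m_i$, imposing $m_+=m_-$ in the non-square case. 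There is no serious obstacle once Theorem~\ref{main} and the Tate conjecture are in hand; the only points needing care are checking that no ``unexpected'' product $\omega_j\omega_k$ equals $q$ (where the relation $\omega\overline{\omega}=q$ is used) and keeping the square versus non-square dichotomy straight — the constraint $m_+=m_-$ is precisely what makes the non-square list shorter — while the enumeration itself is finite and purely mechanical.
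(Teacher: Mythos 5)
Your proof is correct and takes essentially the same route as the paper's: the Tate conjecture for $F(X)$ (Remark~\ref{tatec}) identifies $\rho(F(X))$ with the multiplicity of $q$ as a root of $Q_2(F(X),T)$, Theorem~\ref{main} turns this into counting the pairs $\{j,k\}$ with $\omega_j\omega_k=q$, and the numerical claims are settled by the same finite inspection of the possible $(m_+,m_-,m_1,\dots,m_c)$. The only difference is that you spell out the pair count via $\bar\omega_j=q/\omega_j$ and the constraint $m_+=m_-$ when $q$ is not a square, details the paper compresses into ``inspection of all possible cases.''
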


\begin{proof}
The Tate conjecture holds for divisors on $F(X)$ (Remark \ref{tatec}).\ As explained at the end of Section \ref{sedwc}, it says that the rank of the Picard group is the multiplicity of $q$ as a root  of $Q_2(F(X),T)$.\ The remaining statements then follow  from   Theorem \ref{main}
by inspection of all possible cases for the values of 
$m_+,m_-,m_1,\dots, m_c$.
\end{proof}

\begin{rema}[The Artin--Tate conjecture for $F(X)$]\upshape\label{atcsrem}
Let    $X\subset \P^4_{\F_q}$ be a  smooth  cubic hypersurface defined over $\F_q$.\ By Remark \ref{tatec}, the Tate conjecture holds for
the (smooth projective) surface $F(X)$. 

We denote by $\NS(F(X))$ its  N\'eron--Severi group, by $\NS(F(X))_{\rm tor}$
its torsion subgroup, and by $\rho(F(X))$ and $\Disc\bigl(\NS(F(X))\bigr)$
   the rank and the discriminant
of the lattice $\NS(Y)/\NS(Y)_{\rm tor}$, respectively.\
Finally, we let 
 $\PicVar (F(X))$ be its Picard variety, which  is reduced of dimension~$5$
 \cite[Lem.~1.1]{Tyurin1}. 
 
 In that situation, the main result of \cite{mil2}    (the Artin--Tate
Conjecture; see \cite[Section~4.4 (C)]{milt}) is that when $q$ is odd,\footnote{When $q$ is even, Tate still proved in \cite[Theorem 5.2]{tatedix} that the equality \eqref{atc} holds modulo a power of $2$.} the Brauer group $\Br\bigl(F(X)\bigr)$ is finite\footnote{Its order is then a square by \cite{llr}.} and 
\begin{equation}\label{atc}
D_q\bigl(F(X)\bigr):=\lim_{s\to1}\frac{P_{2}(F(X),q^{-s})}{(1-q^{1-s})^{\rho(F(X))}}=\frac{
\Card\bigl(\Br(F(X))\bigr)|\Disc\bigl(\NS(F(X))\big)|}{q^{\alpha }\Card\bigl(\NS(F(X))_{\rm tor}\bigr)^{2}},
\end{equation} 
 where $\alpha :=\chi(F(X),\cO_{F(X)})-1+\dim\bigl(\PicVar(F(X))\bigr)=10$.
 
 In all the  examples where we   computed the zeta function of $F(X)$, the equality \eqref{atc} will imply that the quotient $\frac{
\Card (\Br (F(X) ) )|\Disc (\NS (F(X) ) )|}{ \Card (\NS (F(X) )_{\rm tor} )^{2}}$ is an integer (see \eqref{atc1}, \eqref{atc2}, \eqref{atc3}, \eqref{atc4}, \eqref{atc5}, \eqref{atc44}), suggesting that $\NS(F(X))$ might be torsion-free.
\end{rema}

 In \cha\ $0$, the N\'eron--Severi group $\NS(F(X))$ is indeed torsion-free: this follows from the fact that $H^2(F(X),\Z)$ is a free abelian group (\cite[Lemmas 9.3 and 9.4]{cg}).\ Standard arguments using smooth and proper base change then
imply that in \cha\ $p>0$, the N\'eron--Severi group has no prime-to-$p$ torsion.

\subsection{Existence of lines on smooth cubic threefolds over large finite fields}

We can now bound the number of $\F_q$-lines on a smooth cubic threefold defined over $\F_q$.

\begin{theo}\label{prop6}
Let $X$ be a smooth cubic threefold defined over $\F_q$ and let $N_1(F(X))$ be the   number of $\F_q$-lines contained in $X$.\ We have
$$N_1(F(X))\ge
\begin{cases}
1+45q+q^2-10(q+1)\sqrt q& {\it if }\  q\ge 64;\\
1+13q+q^2-6(q+1)\sqrt q& {\it if }\ 16\le q\le 61;\\
1-3q+q^2-2(q+1)\sqrt q& {\it if } \  q\le 13.
\end{cases}
$$
In particular,  
  $X$ contains at least 10 $\F_q$-lines if $q\ge 11$.
  
  Moreover, for all $q$,
  $$N_1(F(X))\le 1+45q+q^2+10(q+1)\sqrt q
  .$$
\end{theo}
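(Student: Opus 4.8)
The plan is to combine Theorem~\ref{main} with the Weil bounds $|\omega_j|=\sqrt q$ on the ten Frobenius eigenvalues. From \eqref{zetafx} and the trace formula \eqref{grolef2} applied to the smooth projective surface $F(X)$, we have
\begin{equation*}
N_1(F(X))=1+q^2+\sum_{1\le j<k\le 10}\omega_j\omega_k-\Bigl(\sum_{1\le j\le 10}\omega_j+\sum_{1\le j\le 10}q\omega_j\Bigr),
\end{equation*}
using $b_0=b_4=1$ with eigenvalues $1$ and $q^2$, $b_1=b_3=10$ with eigenvalues $\omega_j$ and $q\omega_j$, and $b_2=45$ with eigenvalues $\omega_j\omega_k$ ($j<k$). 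So first I would record this exact expression for $N_1(F(X))$.

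Next I would set $S:=\sum_{j=1}^{10}\omega_j=-M_1(X)$ and note the purely formal identity $\sum_{j<k}\omega_j\omega_k=\tfrac12\bigl(S^2-\sum_j\omega_j^2\bigr)=\tfrac12\bigl(S^2+M_2(X)\bigr)$, so that
\begin{equation*}
N_1(F(X))=1+q^2-(q+1)S+\tfrac12\bigl(S^2-\textstyle\sum_j\omega_j^2\bigr).
\end{equation*}
The trivial bounds are $|S|\le 10\sqrt q$ and $|\sum_j\omega_j^2|\le 10q$, giving at once the upper bound $N_1(F(X))\le 1+q^2+10(q+1)\sqrt q + \tfrac12(100q+10q)=1+q^2+10(q+1)\sqrt q+55q$; but this is slightly weaker than claimed, so the real content is a better handling of the $H^2$ term. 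The cleaner route, and the one I would actually take, is to bound the $H^2$-contribution directly: $\bigl|\sum_{j<k}\omega_j\omega_k\bigr|\le 45q$ always, which gives the stated upper bound $N_1(F(X))\le 1+45q+q^2+10(q+1)\sqrt q$ immediately, and symmetrically the crude lower bound $N_1(F(X))\ge 1-45q+q^2-10(q+1)\sqrt q$. The point of the three-case refinement is that the $H^2$-eigenvalues are not independent of the $H^1$-eigenvalues: since $H^2(\overline{F(X)},\Q_\ell)\isom\bw{2}{H^1(\overline{F(X)},\Q_\ell)}$, each $\omega_j\omega_k$ with $1\le j<k\le 10$ is a product of two Frobenius eigenvalues of the abelian fivefold $A(F(X))$, and the Tate conjecture (Remark~\ref{tatec}) identifies the multiplicity of $q$ among the $\omega_j\omega_k$ with $\rho(F(X))$. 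So I would split according to which roots of $Q_1$ coincide: write $S=\sum q^{1/2}e^{i\theta_j}$ and observe that the subsum of $\sum_{j<k}\omega_j\omega_k$ equal to a fixed value $q$ (coming from conjugate pairs $\omega_j\bar\omega_j=q$) has size exactly $\rho_0$ where $\rho_0$ is the number of conjugate pairs, and the remaining terms have absolute value $\le(45-\rho_0)q$. Feeding in $S=\sum_{j}\omega_j$ with the constraint that a conjugate pair contributes $2\sqrt q\cos\theta_j$ (real, $|\cdot|\le 2\sqrt q$) while a real root $\pm\sqrt q$ of multiplicity contributing to $\rho(F(X))$ per Corollary~\ref{pic}, one gets, for each of the three regimes of $q$, the extremal configuration: roughly, for small $q$ the optimum forces few repeated roots so one is near $\rho=0$ hence the $-3q$ (from a single forced pair, or from the parity constraint $b_1$ even combined with the geometry), for intermediate $q$ one can afford $\rho$ around $6$ hence $+13q$, and for $q\ge 64$ the fully supersingular configuration $Q_1(F(X),T)=(T\mp\sqrt q)^{10}$ becomes admissible within the Weil range, giving $\rho=45$ and the $+45q$ bound together with $S=\pm 10\sqrt q$ and $\sum_j\omega_j^2=10q$.

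Concretely the steps are: (1) write $N_1(F(X))$ exactly in terms of $\omega_1,\dots,\omega_{10}$ via Theorem~\ref{main}; (2) reparametrize $\omega_j=\sqrt q\,\zeta_j$ with $|\zeta_j|=1$, the $\zeta_j$ closed under conjugation, and express $N_1(F(X))-1-q^2$ as $q\,f(\zeta)+\sqrt q\,g(\zeta)$ where $f(\zeta)=\sum_{j<k}\zeta_j\zeta_k$ and $g(\zeta)=-(q+1)q^{-1/2}\cdot q^{1/2}\sum_j\zeta_j$... (being careful: the $H^1$ and $H^3$ terms together give $-(q+1)\sqrt q\sum_j\zeta_j$); (3) bound $g$ trivially by $10(q+1)\sqrt q$ in absolute value, and bound $f$ according to the number $c$ of conjugate pairs among the $\zeta_j$, using that the ``diagonal-ish'' sub-sum $\sum \zeta_j\bar\zeta_j$ over pairs equals $c$ while the rest is $\le(45-c)$ in modulus, and then optimize the resulting piecewise-linear-in-$q$ lower bound over $c\in\{0,\dots,5\}$ subject to the Weil-range feasibility of each $c$; (4) read off the three cases and the uniform upper bound. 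The main obstacle is step~(3): controlling $f(\zeta)=\sum_{j<k}\zeta_j\zeta_k$ well enough to beat $45q$ requires genuinely using the structure $H^2=\bw 2 H^1$ — i.e.\ that the $H^2$-eigenvalues are products of pairs of unit-modulus numbers closed under conjugation, so their sum cannot simultaneously be as large as $45$ in one direction while $\sum_j\zeta_j$ is large in the compatible direction, unless one is in the (nearly) supersingular configuration. Making the case analysis over $c$ clean, and checking which values of $c$ are actually realizable (equivalently, excluded by the Weil circle) in each range $q\le 13$, $16\le q\le 61$, $q\ge 64$, is the bookkeeping-heavy part; the thresholds $13,16,61,64$ are exactly the points where an extra conjugate pair can be absorbed without violating $|\sqrt q\sum\zeta_j|$ staying consistent with $f$ attaining its constrained extremum. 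The last assertion of the theorem, $X$ contains $\ge 10$ lines for $q\ge 11$, then follows since in the regime $q\le 13$ the bound $1-3q+q^2-2(q+1)\sqrt q\ge 10$ holds for $q\ge 11$ (a direct numerical check: at $q=11$ it gives $1-33+121-2\cdot12\sqrt{11}=89-24\sqrt{11}>10$), and for larger $q$ the stronger bounds only increase.
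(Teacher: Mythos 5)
Your starting point coincides with the paper's: by Theorem~\ref{main} and \eqref{grolef2}, $N_1(F(X))=1+q^2+\sum_{j<k}\omega_j\omega_k-(q+1)\sum_j\omega_j$, and from this you do obtain the uniform upper bound correctly (bounding $\bigl|\sum_{j<k}\omega_j\omega_k\bigr|\le 45q$ and $\bigl|\sum_j\omega_j\bigr|\le 10\sqrt q$). But the heart of the theorem --- the three-case lower bound --- is not proved in your proposal: your step~(3), which you yourself flag as the main obstacle, is only a plan, and as described it would not work. The Picard number, the Tate conjecture, and the ``realizability'' or ``Weil-range feasibility'' of eigenvalue configurations are irrelevant here: the paper's lower bound is obtained by minimizing over \emph{all} configurations permitted by the Weil bounds, not just arithmetically realizable ones, so no feasibility analysis can be the source of the thresholds; and your heuristic for the terms $-3q$, $+13q$, $+45q$ (``forced pairs'', supersingularity ``becoming admissible'' at $q\ge 64$) does not correspond to any estimate one could actually carry out. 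A small further slip: at $q=11$ the bound gives $89-24\sqrt{11}\approx 9.4$, which is \emph{not} $>10$; one needs the integrality of $N_1(F(X))$ (and the checks at $q=13$ and in the higher ranges) to conclude that there are at least $10$ lines.

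The missing idea, which is essentially the whole content of the paper's proof, is to exploit the fact that the roots of $Q_1(F(X),T)$ come in conjugate pairs $\omega_j,\bar\omega_j$ ($1\le j\le 5$) and to rewrite everything in terms of the five real numbers $r_j=\omega_j+\bar\omega_j\in[-2\sqrt q,2\sqrt q]$: the diagonal products $\omega_j\bar\omega_j$ contribute the constant $5q$, the cross terms give $\sum_{1\le j<k\le 5}r_jr_k$, and one gets $N_1(F(X))=1+5q+q^2-(q+1)\sum_j r_j+\sum_{j<k}r_jr_k$, which is \emph{affine in each $r_j$ separately} (no $r_j^2$ occurs). Hence its extrema on the box $[-2\sqrt q,2\sqrt q]^5$ are attained at the vertices $2\sqrt q(\pm1,\dots,\pm1)$, where, writing $t=2l-5$ for a vertex with $l$ positive coordinates, the value is $1-5q+q^2-2t(q+1)\sqrt q+2qt^2$. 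The three cases merely record which vertex ($t=1$, $3$, or $5$) realizes the minimum: the switch between $t=1$ and $t=3$ happens at $q+1=4\sqrt q$ (about $q=13.9$) and between $t=3$ and $t=5$ at $q+1=8\sqrt q$ (about $q=62$), and since there is no prime power strictly between $13$ and $16$, nor between $61$ and $64$, the thresholds are clean. Your identity $\sum_{j<k}\omega_j\omega_k=\tfrac12\bigl(S^2-\sum_j\omega_j^2\bigr)$ does not by itself give access to this, because $S$ and $\sum_j\omega_j^2$ are far from independent; the conjugate-pair parametrization is precisely what decouples the problem into a multilinear optimization with an explicit finite list of candidate extrema.
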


\begin{proof}
As we saw in Section  \ref{sedwc}, we can write the roots of  $Q_1(F(X),T)$  
as $\omega_1,\dots ,\omega_5 ,\overline\omega_1,\dots ,\overline\omega_5$.\ The $r_j:=\omega_j+\overline \omega_j$ are then real numbers in $[-2\sqrt{q},2\sqrt{q}]$ and, by \eqref{grolef2} and Theorem \ref{main}, we  have
\begin{eqnarray*}
N_1\bigl( F(X)\bigr)&=&1-\sum_{1\le j\le 5}r_j+5q+\sum_{1\le j<k\le 5}(\omega_j  \omega_k+ \overline\omega_j  \omega_k+ \omega_j  \overline\omega_k+ \overline\omega_j  \overline\omega_k)-\sum_{1\le j\le 5}qr_j+q^2\\
&=&1+5q+q^2-(q+1)\sum_{1\le j\le 5}r_j+\sum_{1\le j<k\le 5}r_j  r_k\\
&=:&F_q( r_1,\dots,r_5 ).
\end{eqnarray*}
Since the   real function
$F_q\colon [-2\sqrt{q},2\sqrt{q}]^5\to \R$ is   {\em linear in each variable,} its extrema are reached on the boundary of its domain, \ie,  at one of the points $2\sqrt q\,(\pm 1,\dots,\pm 1)$.\ At such a point $\mr_l$ (with $l$ positive coordinates), we have  
$$F_q(\mr_l)=1+5q+q^2-2(2l-5)(q+1)\sqrt q+  \frac12\bigl( 4q(2l-5)^2 -20q\bigr).
$$
The  minimum
is  obviously reached      for $l\in\{3,4,5\}$, the maximum for $l= 0$, and the rest is easy. 
 \end{proof}

\subsection{Computing techniques: the Bombieri--Swinnerton-Dyer method}\label{secbsd}

By Theorem~\ref{main}, the zeta function of the surface $F(X)$  of lines contained in a smooth cubic threefold $X\subset \P^4_{\F_q}$ defined over $\F_q$  is completely determined by the roots $q\omega_1,\dots,q\omega_{10}$ of the degree-10 \cha\ polynomial 
of the Frobenius morphism acting on $H^3(\overline X,\Q_\ell)$.\ If one knows the numbers of points of $X$ over   sufficiently many finite extensions of $\F_q$, these roots can be computed from the relations
$$\exp \Bigl (\sum_{r\ge 1}M_r(X)\frac{T^r}{r}\Bigr)=P_3(X,T/q)=P_1(F(X),T)=\prod_{1\le j\le 10}(1-\omega_jT),
$$
where
 $
M_r(X) =
\frac1{q^r}\bigl(N_r(X)-(
1+q^r+q^{2r}+q^{3r})\bigr) 
$ was defined in \eqref{mr}.

The reciprocity relation \eqref{rec} implies that
the polynomial $P_1(F(X),T)$ is   determined by the coefficients of $1,T,\dots,T^5$, hence by the numbers $N_1(X),\dots,N_5(X)$.\ The direct computation of these numbers is possible (with a computer) when $q$ is small (see Section \ref{secf2} for examples), but the amount of calculations quickly becomes very large. 

We will explain a method for  computing directly the   numbers $M_1(X),\dots,M_5(X)$.\ It was first introduced  in \cite{bsd}  and uses a classical geometric construction which expresses   the blow up of $X$ along a line as a conic bundle.\ It is valid only in \cha s $\ne 2$ and requires $X$ to contain  an $\F_q$-line $L$. 

Let $\widetilde X\to X$ be the blow up of $L$.\ Projecting from $L$ induces a morphism $\pi_L\colon \widetilde X\to \P^2_{\F_q}$ which is a conic bundle  and we denote by $\Gamma_L\subset \P^2_{\F_q}$ its discriminant curve, defined over $\F_q$.
 {\em Assume from now on that $q$ is odd;} the curve  $\Gamma_L$  is then a nodal plane quintic curve   and the associated double cover $\rho\colon \widetilde \Gamma_L\to \Gamma_L$ is admissible in the sense of \cite[D\'efinition 0.3.1]{bea} (the curve $\widetilde \Gamma_L$ is nodal and the fixed points of the   involution associated with $\rho$ are exactly the nodes of 
$\widetilde \Gamma_L$; \cite[Lemma~2]{bsd}).\footnote{In \cha\ 2, the curves  $  \Gamma_L$ and $ \widetilde \Gamma_L$  might not be nodal (see Lemma~\ref{fermat2}).} 

One can then define the Prym variety associated with $\rho$ and it  is isomorphic to the Albanese variety of the surface $F(X)$ (\cite[Theorem 7]{mur} when $\Gamma_L$ is smooth).\ The following is \cite[Formula (18)]{bsd}.

\begin{prop}\label{proposition number of points Bombieri}
Let $X\subset \P^4_{\F_q}$ be a smooth cubic threefold defined over $\F_q$, with $q$ odd, and assume that $X$ contains an $\F_q$-line $L$.\ With the notation \eqref{mr}, we have, for all $r\ge 1$,
$$M_r(X)=N_r(\widetilde \Gamma_L)-N_r( \Gamma_L).$$
\end{prop}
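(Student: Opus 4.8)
The plan is to compute $N_r(\widetilde X)$ in two different ways and compare. First a reduction: since $X$, $L$, $\widetilde X$, the conic bundle $\pi_L$, its discriminant curve $\Gamma_L$ and the double cover $\rho\colon\widetilde\Gamma_L\to\Gamma_L$ are all defined over $\F_q$, and all of these constructions commute with the base change $\F_q\hookrightarrow\F_{q^r}$ (the discriminant is the vanishing of a determinant of a symmetric matrix of linear forms, and $\widetilde\Gamma_L$ is the double cover parametrizing the components of the degenerate fibres — both recipes being insensitive to the base field in odd characteristic, and $q^r$ being again odd), it suffices to prove the identity for $r=1$ and then apply it over each $\F_{q^r}$, using $N_r(Y)=N_1(Y_{\F_{q^r}})$.

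For the first computation of $N_1(\widetilde X)$ I would use that $\widetilde X\to X$ is the blow-up of the smooth curve $L\simeq\P^1_{\F_q}$, so its exceptional divisor is a $\P^1$-bundle over $L$ and hence has $(q+1)^2$ rational points; this gives $N_1(\widetilde X)=N_1(X)-(q+1)+(q+1)^2=N_1(X)+q(q+1)$.

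For the second computation I would partition $\widetilde X(\F_q)$ according to the $\P^2$-valued image under $\pi_L$ and count each fibre. For $p\notin\Gamma_L$ the fibre is a smooth conic, which has an $\F_q$-point by the Chevalley--Warning theorem and hence is a $\P^1_{\F_q}$ with $q+1$ points. For $p$ a smooth point of $\Gamma_L$ the fibre is a pair of distinct lines meeting in a point: if they are $\F_q$-rational the fibre has $2q+1$ points, if they are conjugate over $\F_{q^2}$ it has only the intersection point; and the fibre of $\rho$ over $p$, being the pair of components of this degenerate conic, has $2$, resp.\ $0$, rational points accordingly. For $p$ a node of $\Gamma_L$ the conic has rank $1$ (here the smoothness of $X$ and the classical geometry of \cite{bsd,bea} enter: $\Gamma_L$ is nodal and the rank drops to exactly $1$ precisely over its nodes), i.e.\ it is a double line whose reduced support is $\F_q$-rational (a rank-one quadratic form over $\F_q$ is a scalar times the square of a linear form which is $\F_q$-rational up to a scalar), so the fibre has $q+1$ points, while $\rho$ has a single point above $p$, $\F_q$-rational whenever $p$ is (being Frobenius-fixed). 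Writing $\Card\bigl(\pi_L^{-1}(p)(\F_q)\bigr)=q+1+a_p$, one checks in each of these cases that $a_p\in\{0,\pm q\}$ and that $\Card\bigl(\rho^{-1}(p)(\F_q)\bigr)-1=a_p/q$. Summing over $p\in\P^2(\F_q)$ (the terms with $p\notin\Gamma_L$ having $a_p=0$) then yields $N_1(\widetilde X)-(q+1)(q^2+q+1)=\sum_p a_p=q\bigl(N_1(\widetilde\Gamma_L)-N_1(\Gamma_L)\bigr)$.

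Comparing the two expressions for $N_1(\widetilde X)$ gives
\[
q\bigl(N_1(\widetilde\Gamma_L)-N_1(\Gamma_L)\bigr)=N_1(X)+q(q+1)-(q+1)(q^2+q+1)=N_1(X)-(1+q+q^2+q^3),
\]
which is exactly $qM_1(X)$ by the definition \eqref{mr}. I expect the genuinely delicate part to be the bookkeeping over $\Sing(\Gamma_L)$ in the second computation: one must invoke the precise classical picture (that $X$ smooth forces $\Gamma_L$ nodal with corank-one fibres exactly over its nodes, and that $\rho$ is admissible, hence has exactly one point over each node), and one must check the $\F_q$-rationality of the points counted there — the reduced line of a rank-one fibre, and the point of $\widetilde\Gamma_L$ over a rational node, both Frobenius-fixed. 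Everything else is elementary point counting, the smooth-fibre case resting only on Chevalley--Warning.
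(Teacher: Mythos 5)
Your proposal is correct and follows essentially the same route as the paper's proof: both compute $N_1(\widetilde X)$ once via the blow-up formula and once fiberwise over $\P^2$ through the conic bundle, with the same case analysis (smooth conic, split/non-split pair of lines over smooth points of $\Gamma_L$, double line over nodes with a single rational point of $\widetilde\Gamma_L$ above), and then pass to $\F_{q^r}$. Your $a_p$-bookkeeping is just a repackaging of the paper's count $qN_r(\widetilde\Gamma_L)+N_r(\Gamma_L)$ of points on degenerate fibers, so there is nothing essentially different to compare.
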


\begin{proof}
We will go quickly through the proof of \cite{bsd} because it is the basis of our algorithm.\ A point $x\in \P^2(\F_{q})$ corresponds to an $\F_q$-plane $P_x\supset L$ and the fiber $\pi_L^{-1}(x)$ is 
 isomorphic to  the conic $C_{x}$ such that
$X\cap P_{x}= L+C_x$.\ We have four cases:
\begin{itemize}
\item[(i)] either $C_{x}$ is geometrically irreducible, \ie, $x\not\in\Gamma_{L}(\F_q)$, in which case $\pi_L^{-1}(x)(\F_q)$ consists of $q+1$ points;
\item[(ii)] or $C_{x} $ is the union of two different
$\F_q$-lines, \ie, $x$ is smooth on $\Gamma_L$ and the $2$ points of $ \rho^{-1}(x) $  
are in $\widetilde \Gamma_L(\F_q)$, in which case $\pi_L^{-1}(x)(\F_q)$ consists of $2q+1$ points;
\item[(iii)] or $C_{x} $ is the union of two different conjugate
$\F_{q^2}$-lines, \ie, $x$ is smooth on $\Gamma_L$ and the $2$ points of $ \rho^{-1}(x) $  
are {\em not} in $\widetilde \Gamma_L(\F_q)$, in which case $\pi_L^{-1}(x)(\F_q)$ consists of $ 1$ point;
\item[(iv)] or $C_x$ is twice an $\F_q$-line, \ie, $x$ is singular on $\Gamma_{L}$, in  which case    $\pi_L^{-1}(x)(\F_q)$ consists of   $q+1$ points.
\end{itemize}
The total number of points of $\widetilde \Gamma_L(\F_{q})$ lying on a degenerate conic $C_x$ is therefore $q N_1(\widetilde \Gamma_L)+N_1( \Gamma_L)$ and we obtain
\[
N_1(\widetilde X)=(q+1)\bigl(N_{1}(\P^{2}_{\F_q})-N_{1}(\Gamma_{L})\bigr)+q N_1(\widetilde \Gamma_L)+N_1( \Gamma_L).
\]
Finally, since each point on $L\subset  X$ is replaced by a $\P^{1}_{\F_q}$
on $\widetilde X$, we have  
\[
N_{1}(\widetilde X)=N_{1}(X)-(q+1)+(q+1)^{2},
\]
 thus $N_{1}(X)=q^{3}+q^{2}+q+1+q\bigl(N_{1}(\widetilde \Gamma_L)-N_{1}(\Gamma_{L})\bigr)$.\ Since   the same conclusion holds upon replacing $q$ with $q^r$, this proves the proposition.
\end{proof}

Let $x\in \Gamma_L(\F_q)$.\  In order to compute
the numbers $N_1( \Gamma_L)-N_1(\widetilde\Gamma_{L})$, we need to understand
when the    points of $\rho^{-1}(x)$ are defined over $\F_q$. 

We follow \cite[p.~6]{bsd}.\ Take homogenous $\F_q$-coordinates $x_1,\dots,x_5$ on $\P^4$ so that $L$ is given by the equations $ x_{1}=x_{2}=x_{3}=0 $.\ The equation of the cubic $X$ can then be written as
\[
f+2q_{1}x_{4}+2q_{2}x_{5}+
 \ell_{1}x_{4}^{2}+2\ell_{2}x_{4}x_{5}+\ell_{3}x_{5}^{2}=0,
\]
 where $f$ is a cubic  form,
$q_{1},q_{2}$ are
quadratic forms, and  
 $\ell_{1},\ell_{2},\ell_{3}$ are linear forms  in the variables $x_{1},x_{2},x_{3}$.\ We choose the plane $\P^2_{\F_q}\subset \P^4_{\F_q}$ defined by $x_4=x_5=0$.\ If $x=(x_1,x_2,x_3,0,0)\in \P^2_{\F_q}$, the conic $C_x$ considered above is defined by the equation
$$
 f y_{1}^{2}  
 +2q_{1}y_{1}y_{2}+2q_{2}y_{1}y_{3} +
 \ell_{1}y_{2}^{2}+2\ell_{2}y_{2}y_{3}+\ell_{3}y_{3}^{2}=0
$$
and the quintic $\Gamma_L\subset \P^2_{\F_q}$ is defined by the equation
 $\det(M_L)=0$, where
\begin{equation}\label{ml}
M_L:=\begin{pmatrix} 
f & q_{1} & q_{2}  \\
 q_{1} & \ell_{1} & \ell_{2} \\
 q_{2} & \ell_{2} & \ell_{3} 
\end{pmatrix}. 
\end{equation}
For each $i\in\{1,2,3\}$, let $\delta_{i}\in H^{0}\bigl(\Gamma_{L},\cO(a_i)\bigr)$, where $a_i=2$, $4$, or $4$,
be the determinant of the submatrix of $M_L$ obtained by deleting its $i$th row and    $i$th column.\ The $-\delta_{i}$ are transition
functions of an invertible sheaf $\mathcal{L}$ on $\Gamma_{L}$ such
that $\mathcal{L}^{\otimes2}=\omega_{\Gamma_{L}}$ (a thetacharacteristic).\ It defines the double cover $\rho\colon \widetilde\Gamma_{L}\to\Gamma_{L}$.

A point $x\in \P^2_{\F_q}$ is singular on $\Gamma_L$ if and only if $\delta_1(x)=\delta_2(x)=\delta_3(x)=0$.\ These points do not contribute to $M_r$ since the only  point of $\rho^{-1}(x)$ is defined over the field of definition of $x$.\ This is the reason why we may assume that $x$ is smooth in the next proposition.

\begin{prop}
\label{delta}
Let $x$ be a smooth $\F_q$-point of $\Gamma_L$.\
The curve $\widetilde \Gamma_L$ has two $\F_q$-points over $x\in\Gamma_{L}(\F_q)$
if and only if either $-\delta_1(x)\in(\F_q^\times)^{2}$, or $ \delta_1(x)=0$ and either $-\delta_2(x)$ or $-\delta_3(x)$ is in 
 $ (\F_q^\times)^{2}.$\end{prop}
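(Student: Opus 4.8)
The plan is to work out explicitly, in terms of the matrix $M_L$, what it means for a point of $\rho^{-1}(x)$ to be rational. Recall that near a smooth point $x\in\Gamma_L(\F_q)$ the double cover $\rho\colon\widetilde\Gamma_L\to\Gamma_L$ is the one attached to the theta-characteristic $\cL$ whose transition functions are the $-\delta_i$; concretely, on the open set $U_i\subset\Gamma_L$ where $\delta_i\neq0$, the curve $\widetilde\Gamma_L$ is cut out by an equation of the form $t_i^2=-\delta_i$ (after a suitable choice of local trivialization), and the gluing over $U_i\cap U_j$ multiplies these square roots by the transition function. So the two points over $x$ are $\F_q$-rational exactly when $-\delta_i(x)$ is a square in $\F_q$ for some (equivalently, for the relevant) index $i$ with $\delta_i(x)\neq0$. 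I would start by recording this local description precisely and reducing the statement to: \emph{the fibre $\rho^{-1}(x)$ splits over $\F_q$ iff $-\delta_i(x)\in(\F_q^\times)^2$ for the index $i$ used to trivialize $\cL$ near $x$.}

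Next I would connect the $\delta_i$ directly to the conic $C_x$. The conic $C_x$ has Gram matrix $M_L(x)$, which is a $3\times3$ symmetric matrix over $\F_q$ of rank $2$ (since $x$ is a smooth point of the quintic $\det(M_L)=0$, the matrix drops rank by exactly one). A rank-$2$ symmetric matrix decomposes over $\F_q$ as a product of two distinct linear forms that are defined over $\F_q$ versus conjugate over $\F_{q^2}$ according to whether the relevant $2\times2$ cofactor is a square or a nonsquare in $\F_q^\times$ — this is the classical discriminant-of-a-degenerate-conic computation, and it is exactly the dichotomy between cases (ii) and (iii) in the proof of Proposition~\ref{proposition number of points Bombieri}. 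The point is that the $2\times2$ cofactors of $M_L(x)$ are, up to the square of a scalar, the quantities $-\delta_i(x)$; so "$C_x$ splits over $\F_q$" is equivalent to "$-\delta_i(x)$ is a square" whenever $\delta_i(x)\neq0$. The case analysis: if $-\delta_1(x)\in(\F_q^\times)^2$ we are done immediately; if $\delta_1(x)=0$ but the matrix still has rank $2$, then one of $\delta_2(x),\delta_3(x)$ is nonzero, and the same cofactor criterion gives the condition on $-\delta_2(x)$ or $-\delta_3(x)$; if $\delta_1(x)=\delta_2(x)=\delta_3(x)=0$ the matrix has rank $\le1$, which forces $x$ to be singular on $\Gamma_L$, contradicting our hypothesis.

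I would organize the argument as: (1) local model of $\rho$ via $t_i^2=-\delta_i$ and the transition functions $-\delta_i$; (2) identification of the $\delta_i(x)$ (up to squares) with the $2\times2$ cofactors of $M_L(x)$, together with the observation that at a smooth point of $\Gamma_L$ not all three vanish; (3) the elementary-linear-algebra lemma that a corank-$1$ symmetric $3\times3$ matrix over $\F_q$ factors into two $\F_q$-rational linear forms iff a nonzero $2\times2$ principal (or appropriate) cofactor is a square; (4) matching cases (ii)/(iii) of Proposition~\ref{proposition number of points Bombieri} with the two/one rational points of $\rho^{-1}(x)$, to conclude. The main obstacle I anticipate is bookkeeping in step (1)--(2): pinning down the precise normalization that makes "$t_i^2=-\delta_i$" literally correct (the $\cL^{\otimes2}\isom\omega_{\Gamma_L}$ identification and the sign conventions of \cite{bsd}), and being careful that the square/nonsquare class of the cofactor is trivialization-independent on the locus where it is nonzero. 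Once those identifications are nailed down, the rest is the short case distinction in step (4). (This is essentially the content of \cite[p.~6]{bsd}, reorganized; one cites loc.\ cit.\ for the thetacharacteristic statement and does the cofactor computation by hand.)
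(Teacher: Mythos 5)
Your proposal is correct, and its mathematical core is the same as the paper's: the two points of $\rho^{-1}(x)$ are rational exactly when the rank-two conic $C_x$ splits into two $\F_q$-rational lines, and this splitting is read off from the square class of a nonvanishing minor of $M_L(x)$. Where you differ is in how the minors enter. The paper works in explicit coordinates: it restricts $C_x$ to the line $L=V(y_1)$, so that $-\delta_1(x)=\ell_2^2(x)-\ell_1(x)\ell_3(x)$ appears as the discriminant of the binary form $\ell_1y_2^2+2\ell_2y_2y_3+\ell_3y_3^2$, and when $\delta_1(x)=0$ it moves the vertex of $C_x$ to a coordinate point to bring $-\delta_3$ (or $-\delta_2$) into play. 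You instead use the invariant linear-algebra fact that for a corank-one symmetric $3\times3$ matrix the adjugate is symmetric of rank one, say $\lambda vv^{T}$ with $v$ spanning the kernel, so the principal cofactors are $\delta_i(x)=\lambda v_i^2$ and splitting of the conic is governed by the square class of $-\lambda$, detected by $-\delta_i(x)$ for \emph{any} $i$ with $\delta_i(x)\neq0$. Your route makes the independence of the choice of $i$ manifest and avoids the vertex-chasing case analysis; the paper's route is more explicit and ties $-\delta_1$ directly to the rationality of the two points of $L\cap C_x$.

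Three small repairs. First, a sign slip: the principal cofactors of $M_L(x)$ are the $\delta_i(x)$ themselves, not ``$-\delta_i(x)$ up to a square''; the minus sign enters only through the splitting criterion (a form $ay^2+bz^2$ of rank two splits over $\F_q$ iff $-ab\in(\F_q^\times)^2$), which is why the proposition features $-\delta_i$. Second, your step (1) (a scheme-theoretic local equation $t_i^2=-\delta_i$ for $\widetilde\Gamma_L$) is more than you need and is precisely where your normalization worries live; for this statement it suffices to use, as the paper does via cases (ii)--(iii) in the proof of Proposition~\ref{proposition number of points Bombieri}, that the points of $\rho^{-1}(x)$ correspond Galois-equivariantly to the two line components of $C_x$, so rationality of the fibre is rationality of those components. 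Third, the implication ``all three $\delta_i(x)$ vanish $\Rightarrow$ rank $\le1$'' is false for general matrices and uses the symmetry of $M_L(x)$ (in odd characteristic), again via the adjugate being symmetric of rank $\le1$; alternatively you can simply quote the paper's stated equivalence that $x$ is singular on $\Gamma_L$ iff $\delta_1(x)=\delta_2(x)=\delta_3(x)=0$, which is all your case analysis needs.
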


\begin{proof}
With the notation above, the line $L=V(y_{1})\subset \P^2_{\F_q}$ meets the conic  $C_{x}\subset \P^2_{\F_q}$
at the points $(0,y_2,y_3)$ such that 
\[
 \ell_{1}y_{2}^{2}+2\ell_{2}y_{2}y_{3}+\ell_{3}y_{3}^{2}=0.
\]
Therefore, if $-\delta_1(x)=\ell_2^{2}(x)-\ell_{1}(x)\ell_3(x)$
is nonzero, the curve $\widetilde \Gamma_L$ has two rational points over $x\in\Gamma_{L}(\F_q)$
if and only if $-\delta_1(x)\in (\F_q^\times)^{2}$. 

 When $\delta_1(x)=0$, 
we have $C_x= L_{1}+L_{2}$,
where $L_{1}$ and $L_{2}$ are lines 
meeting in an $\F_q$-point  $z$ of $L$ which we   assume to be $(0,0,1)$.\ This means that there is no $y_3$ term in the equation of $C_x$, hence $\ell_{2}(x)=\ell_{3}(x)=q_2(x)=0$.\ The conic $C_x$ is defined by  the equation
$$
 \ell_{1}(x)y_{2}^{2}+2q_{1}(x)y_{1}y_{2}+ f (x)  y_{1}^{2} =0
$$
and the two lines $L_{1}$ and $L_{2}$ are   defined over $\F_q$ if and only if $-\delta_3(x)=q_1^{2}(x)-\ell_{1}(x)f(x)\in (\F_q^\times)^{2}$ (since $\delta_1(x)=\delta_2(x)=0$, this is necessarily nonzero because $x$ is smooth on  $\Gamma_L$).

For the general case: if $y_3(z)\ne 0$, we make a linear change of coordinates $y_1=y'_1$, $y_2=y'_2+ty'_3$, $y_3=y'_3$ in order to obtain $y'_2(z)=0$, and we check that  $-\delta_3(x)$ is unchanged; if $z= (0,1,0)$, we obtain as above   $\delta_1(x)=\delta_3(x)=0$ and   $L_{1}$ and $L_{2}$ are   defined over $\F_q$ if and only if $-\delta_2(x) \in (\F_q^\times)^{2}$.\ This proves the proposition.
\end{proof}

We can now describe our algorithm for the computation of the numbers $M_{r}(X)=N_{r}(\widetilde\Gamma_{L})-N_{r}(\Gamma_{L})$.

The input data is a cubic threefold $X$ over $\F_q$ containing
an $\F_{q}$-line $L$.\ We choose coordinates as above and construct the matrix $M_L$ of \eqref{ml} whose determinant is the equation
of the quintic $\Gamma_{L}\subset \P^{2}_{\F_q}$.\ We compute $M_{r} $
with the following simple algorithm.

\begin{algorithm}[H]\label{algo}
 \KwIn{$(X,L,r)$}
 \KwOut{$M_r$}
 Compute the matrix $M_L$, the three minors $\delta_1,\delta_2,\delta_3$ and
 the curve $\Gamma_L$\;
 $M_r := 0$\;
 \While{$p\in \{p\, :\, p \in \Gamma_L(\F_{q^r})\mid \Gamma_L\text{ is smooth at }p\}$}{
  \eIf{$-\delta_1(p)\in(\F_{q^r}^\times)^2$ or 
  $(\delta_1(p) = 0$ and 
 $(-\delta_2(p)\in(\F_{q^r}^\times)^2 \text{ or } -\delta_3(p)\in(\F_{q^r}^\times)^2))$}{
   $M_r := M_r + 1$\;
   }{
   $M_r := M_r-1$\;
  }
 }
 \KwRet{$M_r$}\;
\caption{Computing $M_r$.}
\end{algorithm}

 \subsection{Lines on mildly singular  cubic threefolds}\label{nnodal} 
We describe a method based on results of Clemens--Griffiths and Kouvidakis--van der Geer which reduces the computation of the number of $\F_q$-lines on a cubic  threefold   with a single singular point, of type $A_1$ or  $A_2$,\footnote{A hypersurface singularity is of type $A_j$ if it is, locally analytically, given by an equation $x_1^{j+1}+x_2^2+\dots+x_{n+1}^2=0$.\ Type $A_1$ is also called a node.}  to the computation of the number of points on a smooth curve of genus 4.\ One consequence is that there is always an $\F_q$-line when $q>3$. 

Let $C$ be a smooth nonhyperelliptic curve of genus 4 defined over a perfect field $\F$.
We denote by $g^1_3$ and $h^1_3=K_C-g^1_3$ the (possibly equal) degree-3 pencils on $C$.\ The canonical curve $\phi_{K_C}(C)\subset \P^3_\F$ is contained in a unique geometrically integral   quadric surface $Q$ whose rulings cut out the degree-3 pencils on $C$; more precisely,
\begin{itemize}
\item either $Q\isom \P^1_\F\times \P^1_\F$ and the two rulings of $Q$ cut out distinct degree-3 pencils $g^1_3$ and $h^1_3=K_C-g^1_3$ on $C$ which are defined over $\F$;
\item or $Q$ is smooth but its two  rulings are defined over a quadratic extension of $\F$ and are exchanged by the Galois action, and so are $g^1_3$ and $h^1_3$;
\item or $Q$ is singular and its ruling cuts out   a degree-3 pencil  $g^1_3$  on $C$ which is defined over $\F$ and satisfies $K_C=2g^1_3 $.
\end{itemize}
Let  
$\rho\colon \P^3_\F\dra\P^4_\F$ be the rational map defined by the 
linear system of cubics containing $\phi_{K_C}(C)$.\ The image 
of $\rho$ is a cubic threefold $X$ defined over $\F$; it has a single   singular point, 
  $\rho(Q)$, which is of type $A_1$ if $Q$ is smooth, and of type $A_2$ otherwise.\ Conversely, every  cubic  threefold  $X\subset \P^4_\F$  defined over $\F$ with a single singular point $x$, of type $A_1$ or  $A_2$,  is obtained in this fashion: the curve $C$ is $\T_{X,x}\cap X$ and parametrizes the lines in $X$ through $x$ (\cite[Corollary~3.3]{cml}).

The   surface $F(X)$ is isomorphic
to the nonnormal surface 
obtained
by gluing the   images $C_g$ and $C_h$ of   the morphisms $C\to C^{(2)}$  defined by
$p\mapsto g_3^1-p$ and $p\mapsto h_3^1-p$ (when $Q$ is singular,   $F(X)$ has a 
cusp singularity along the curve $C_g=C_h$).\ This was proved in  \cite[Theorem~7.8]{cg} over $\C$ and in \cite[Proposition 2.1]{kvdg} in general.

\begin{prop}\label{propnodal}
Let $X\subset \P^4_{\F_q}$ be a cubic  threefold defined over ${\F_q}$ with a single singular point, of type $A_1$ or  $A_2$.\ Let $C$ be the associated curve of genus $4$, with degree-3 pencils $g_3^1$ and $h_3^1$.\ For any $r\ge 1$,   set $n_r := \Card  (C(\F_{q^r}) )$.\ We have
$$
 \Card\bigl(F(X)(\F_q)\bigr) = 
 \begin{cases}
 \frac12 (n_1^2-2n_1+n_2) & \text{ if     $g_3^1$ and $h_3^1$ are distinct  and defined over $\F_q$;}\\
 \frac12 (n_1^2+2n_1+n_2) & \text{ if  $g_3^1$ and $h_3^1$ are not defined over $\F_q$;}\\
 \frac12 (n_1^2+n_2)& \text{ if  $g_3^1=h_3^1$.}  \end{cases}
$$
\end{prop}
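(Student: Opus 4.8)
The plan is to count $\F_q$-points on $F(X)$ directly from the description of $F(X)$ given just above the statement: $F(X)$ is obtained from the genus-$4$ curve $C$ by gluing the two images $C_g, C_h \subset C^{(2)}$ of the maps $p\mapsto g^1_3 - p$ and $p\mapsto h^1_3 - p$ (with $C_g = C_h$ when $Q$ is singular). So I would first recall that since the gluing is along a closed subscheme, there is a conductor-square short exact sequence relating $\cO_{F(X)}$ to $\cO_{C_g \sqcup C_h}$ and $\cO_{C_g \cap C_h}$, which on $\F_{q^r}$-points gives $\Card(F(X)(\F_{q^r})) = \Card(C_g(\F_{q^r})) + \Card(C_h(\F_{q^r})) - \Card((C_g\cap C_h)(\F_{q^r}))$ — provided one is careful that forming $\F_q$-points of the pushout really is the pushout of $\F_q$-points, which holds here because the normalization $C_g \sqcup C_h \to F(X)$ is finite and the relevant schemes are reduced.

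Next I would identify each of the three terms. Each of $C_g$ and $C_h$ is the image of $C$ under a morphism $C \to C^{(2)}$; this morphism is a closed immersion (for $g^1_3$ a base-point-free pencil with no pair of conjugate points mapping to the same divisor — true since $C$ is not hyperelliptic and $\deg = 3$), so $C_g \isom C$ and $C_h \isom C$ as $\F_q$-schemes \emph{when} the pencils are individually defined over $\F_q$; hence $\Card(C_g(\F_{q^r})) = \Card(C_h(\F_{q^r})) = n_r$. When $g^1_3$ and $h^1_3$ are conjugate over a quadratic extension, the Galois action swaps $C_g$ and $C_h$, so $C_g \sqcup C_h$ is, as an $\F_q$-scheme, the Weil restriction of $C \otimes \F_{q^2}$; thus $\Card((C_g\sqcup C_h)(\F_{q^r})) = n_{2r}$ when $r$ is odd and $= 2n_r$ when $r$ is even — but in the point-count at level $r=1$ this is just $n_2$, which is what appears in the middle case of the formula.

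Then I would compute the intersection $C_g \cap C_h \subset C^{(2)}$. A point of $C_g \cap C_h$ is a degree-$2$ effective divisor $D$ with $g^1_3 - D$ and $h^1_3 - D$ both effective, i.e. $D \le E$ for some $E \in g^1_3$ and $D \le E'$ for some $E' \in h^1_3$. Writing $D = p + q$ (allowing $p = q$), the condition is that $p$ and $q$ lie on a common member of $g^1_3$ \emph{and} on a common member of $h^1_3$; since $g^1_3 + h^1_3 = K_C$, this is exactly the condition that $p + q$ is cut on the canonical curve by a line of the quadric $Q$ meeting it in... — more precisely, $C_g \cap C_h$ is naturally identified with the curve $C$ itself via $p \mapsto p + \iota(p)$ where $\iota$ is the involution... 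Actually the cleanest route: when $g^1_3 \ne h^1_3$, the product map $C \times C \to C^{(2)} \times C^{(2)}$ followed by the two pencil maps shows $C_g \cap C_h$ is a finite scheme, and a dimension/degree count on $C^{(2)}$ (using that $C_g, C_h$ are curves on the surface $C^{(2)}$ with computable self-intersection and $C_g \cdot C_h$) pins down its length; combined with the geometric description in \cite{cg}, \cite{kvdg} of the singular locus of $F(X)$, one gets $\Card((C_g \cap C_h)(\F_{q^r})) = n_r$ in the first case (the glued curve is a copy of $C$) and $= 0$ contributions beyond what is already counted in the second case, while in the third case $C_g = C_h = C$ and there is no subtraction, giving $\tfrac12(n_1^2 + n_2)$ after also accounting for... — here I realize the factor $\tfrac12$ and the $n_2$ terms mean I should instead express everything through $\Card(C^{(2)}(\F_{q^r}))$.

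So the actual heart of the argument is: $\Card(C^{(2)}(\F_{q^r})) = \tfrac12(n_r^2 + n_{2r})$ (the standard symmetric-square point count, since $C^{(2)}(\F_{q^r})$ consists of $\F_{q^r}$-rational pairs $\{p,p\}$ and Galois-conjugate pairs over $\F_{q^{2r}}$), and $C_g, C_h$ are each isomorphic as $\F_q$-varieties to $C$ — or their disjoint union is a Weil restriction — so the gluing formula reads $\Card(F(X)(\F_q)) = \Card(C_g(\F_q)) + \Card(C_h(\F_q)) - \Card((C_g \cap C_h)(\F_q))$, and I must show $C_g \cap C_h$ is a single reduced point or empty or a whole curve according to the three cases. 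I expect the main obstacle to be \emph{precisely identifying $C_g \cap C_h$ scheme-theoretically and over $\F_q$}: showing that when $g^1_3$ and $h^1_3$ are distinct and $\F_q$-rational the intersection is empty as a finite scheme except possibly for a controlled contribution (this is really the statement that $F(X)$ has a normal crossing gluing and the count $\tfrac12(n_1^2 - 2n_1 + n_2)$ already incorporates $C_g \cap C_h = \vide$), reconciling this with the known structure of $\Sing(F(X))$ from \cite{cg} and \cite{kvdg}, and handling the quadratic-extension and $A_2$ cases where Galois descent and the cusp singularity respectively intervene. Once the intersection is understood, the three formulas drop out by plugging $\Card(C(\F_{q^r})) = n_r$ and $\Card(C^{(2)}(\F_{q^r})) = \tfrac12(n_r^2 + n_{2r})$ into the gluing exact sequence.
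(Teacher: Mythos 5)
Your argument computes the wrong quantity at its very first step. In the description preceding the proposition, $F(X)$ is the non-normal \emph{surface} whose normalization is the symmetric square $C^{(2)}$: the gluing identifies the two curves $C_g,C_h\subset C^{(2)}$ with each other (the point $g^1_3-p$ with the point $h^1_3-p$); it does not build $F(X)$ out of the curves $C_g$ and $C_h$ alone. Consequently your conductor-square identity $\Card\bigl(F(X)(\F_{q^r})\bigr)=\Card\bigl(C_g(\F_{q^r})\bigr)+\Card\bigl(C_h(\F_{q^r})\bigr)-\Card\bigl((C_g\cap C_h)(\F_{q^r})\bigr)$ is just the inclusion--exclusion count for the one-dimensional scheme $C_g\cup C_h$ and cannot yield the proposition for any determination of $C_g\cap C_h$: the left side is of the order of $q^2$ (it contains all of $C^{(2)}\setminus(C_g\cup C_h)$), while your right side is of the order of $q$. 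You sense the problem when the term $\tfrac12(n_1^2+n_2)$ forces $C^{(2)}$ into the picture, but your final paragraph still rests on the same identity, so settling $C_g\cap C_h$ (your announced ``main obstacle'') would not repair the proof. A further slip in the same vein: when $g^1_3$ and $h^1_3$ are conjugate, the $\F_q$-scheme $C_g\sqcup C_h$ has \emph{no} $\F_{q^r}$-points for $r$ odd (Frobenius swaps the two components); the count is not $n_{2r}$, and this scheme is not the Weil restriction of $C\otimes\F_{q^2}$ (that would be a surface).

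The paper's proof is the direct orbit count on the correct gluing: $\Card\bigl(C^{(2)}(\F_q)\bigr)=\tfrac12(n_1^2+n_2)$ --- the one computation you do have --- and then one follows Frobenius through the identification map $C^{(2)}\to F(X)$. If $g^1_3\neq h^1_3$ are both defined over $\F_q$, each rational point $g^1_3-p$ with $p\in C(\F_q)$ is glued to the rational point $h^1_3-p$, so $n_1$ rational points are lost. If $g^1_3$ and $h^1_3$ are conjugate, Frobenius exchanges $C_g$ and $C_h$, and for $p\in C(\F_q)$ the fibre $\{g^1_3-p,\,h^1_3-p\}$ is Frobenius-stable although neither of its elements is rational: these create $n_1$ new $\F_q$-points of $F(X)$. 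This is precisely a failure of the statement you wanted to invoke, namely that taking $\F_q$-points commutes with the pushout --- rational points of the glued surface need not lift to rational points of its normalization. Finally, if $g^1_3=h^1_3$, the identification is infinitesimal (a cusp along $C_g=C_h$) and the map is a bijection on points. These three cases give the three displayed formulas.
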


\begin{proof}
Points of $ C^{(2)}(\F_q)$  correspond to
\begin{itemize}
\item the $ \frac12 (n_1^2- n_1)$ pairs of distinct  points of $C(\F_q)$,
\item the $  n_1 $    $\F_q$-points on the diagonal,
\item the $ \frac12 (n_2 - n_1)$ pairs of distinct conjugate points of $C(\F_{q^2})$,
\end{itemize}
for a total of $ \frac12 (n_1^2 +n_2) $ points (compare with \cite[(2.5)]{gash}).\ 
When $g_3^1$ and $h_3^1$ are distinct  and defined over $\F_q$, the gluing process eliminates $n_1$ $\F_q$-points.\ When $g_3^1$ and $h_3^1$ are not defined over $\F_q$, the curves $C_g$ and~$C_h$ contain no pairs of conjugate points, and the gluing process creates $n_1$ new $\F_q$-points.\ Finally, when $g_3^1=h_3^1$, the map $C^{(2)}(\F_q)\to F(X)(\F_q)$ is a bijection.
\end{proof}

\begin{coro}\label{coronodal}
When $q\ge 4$, any cubic  threefold $X\subset \P^4_{\F_q}$   defined over ${\F_q}$ with a single singular point, of type $A_1$ or  $A_2$, contains an $\F_q$-line.
\end{coro}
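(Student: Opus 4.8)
The plan is to reduce the existence of an $\F_q$-line on $X$ to the existence of an $\F_q$-point on the associated genus-$4$ curve $C$, and then invoke known bounds on pointless curves of genus $4$. Concretely, a line in $X$ through the singular point $x$ corresponds to a point of $C = \T_{X,x}\cap X$, so it suffices to show $C(\F_q)\ne\varnothing$ for $q\ge 4$. The Weil bound gives $\Card(C(\F_q))\ge q+1-2\cdot 4\sqrt q = q+1-8\sqrt q$, which is unfortunately negative for all $q\le 64$, so the naive estimate is useless in the range that matters. Instead I would appeal directly to the classification of curves of genus $4$ over small finite fields with no rational point, for which the relevant reference is \cite{HLT}: they show that a (smooth, geometrically connected) curve of genus $4$ over $\F_q$ with $q\ge 4$ always has an $\F_q$-rational point, equivalently there are no pointless genus-$4$ curves over $\F_q$ for $q\in\{4,5,7,8,9,\dots\}$ and $q\ge 4$ in general.

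The steps, in order, would be: (1) Observe that $X$ has a single singular point $x$, of type $A_1$ or $A_2$, defined over $\F_q$ (a single Galois-stable point is automatically $\F_q$-rational); invoke the description recalled before Proposition~\ref{propnodal} to produce the smooth non-hyperelliptic genus-$4$ curve $C$ over $\F_q$ parametrizing lines of $X$ through $x$. (2) Note that any $\F_q$-point $p\in C(\F_q)$ yields an $\F_q$-line of $X$ (the line $\overline{xp}$), so it is enough to show $C(\F_q)\ne\varnothing$. (3) Apply the main result of \cite{HLT}: for $q\ge 4$, every smooth projective geometrically connected curve of genus $4$ over $\F_q$ has an $\F_q$-point. (4) Conclude. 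If one prefers not to cite \cite{HLT} as a black box for all $q\ge 4$, one can split off the cases $q\ge 64$ (or even somewhat smaller, using that the Weil bound combined with the fact that $C$ lies on a quadric in $\P^3$ and hence the Frobenius eigenvalues are constrained gives a positive count for moderately large $q$) and reduce to finitely many small values handled by \cite{HLT} or by a direct search.

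An alternative, more self-contained route is to use Proposition~\ref{propnodal} together with the observation that $\Card(F(X)(\F_q)) = \tfrac12(n_1^2\pm 2n_1 + n_2)$ and $F(X)$ is geometrically connected of dimension $2$; if $F(X)(\F_q)=\varnothing$ then in particular $n_1 = \Card(C(\F_q))$ must satisfy a strong constraint — for instance in the first case $n_1^2 - 2n_1 + n_2 = 0$ forces $n_1 \le 1$ and $n_2$ small, which the Weil/Hasse–Weil bounds on $C$ rule out once $q$ is large, and otherwise $n_1=0$, i.e. $C$ is pointless, contradicting \cite{HLT} for $q\ge 4$. Either way, the argument funnels into the pointless-genus-$4$-curve input.

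The main obstacle is precisely that the elementary Weil bound is far too weak in the relevant range $4\le q\le 61$: the inequality $q+1-8\sqrt q>0$ fails badly there, so one genuinely needs the sharper classification results of \cite{HLT} (there do exist pointless genus-$4$ curves over $\F_2$ and $\F_3$, which is exactly why the statement is restricted to $q\ge 4$ and why the cases $q\in\{2,3\}$ are dealt with separately by explicit examples in Section~\ref{nodal}). Everything else — the reduction from lines through the node to points of $C$, and the rationality of the singular point — is routine given the geometric description already set up before Proposition~\ref{propnodal}.
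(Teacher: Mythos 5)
The key external input you rely on is misquoted, and this breaks the argument exactly in the range that matters. The result of \cite{HLT} (their Theorem 1.2, as cited in the paper) is that pointless smooth genus-$4$ curves over $\F_q$ exist \emph{if and only if} $q\le 49$; what is true is only that no such curves exist for $q>49$. So your step (3) --- ``for $q\ge 4$ every genus-$4$ curve over $\F_q$ has an $\F_q$-point'' --- is false for every $q$ with $4\le q\le 49$, which is precisely the range not already handled by the Weil bound (that bound only becomes positive around $q\ge 62$). Consequently your main route, which reduces the corollary to $C(\F_q)\ne\varnothing$, cannot work: there are non-hyperelliptic pointless genus-$4$ curves over small fields, and for the corresponding nodal cubics there is no $\F_q$-line \emph{through the node} at all; the $\F_q$-lines whose existence the corollary asserts then come from elsewhere. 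This also shows that the reduction itself is too lossy: lines of $X$ not passing through the singular point are counted by $F(X)$ via pairs of conjugate points of $C(\F_{q^2})$, as in Proposition~\ref{propnodal}, so pointlessness of $C$ over $\F_q$ alone does not preclude $\F_q$-lines on $X$.

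The paper's proof uses exactly this finer information, and your ``alternative route'' gestures at it but leaves out all the substantive steps. Assuming $X$ has no $\F_q$-line, Proposition~\ref{propnodal} gives either $n_1=n_2=0$, or $n_1=n_2=1$ with $g^1_3$ and $h^1_3$ distinct and rational; the latter is eliminated by a short argument showing it would force $g^1_3\equiv 3x\equiv h^1_3$. One is left with $n_1=n_2=0$, i.e.\ $C$ pointless over $\F_{q^2}$, so \cite{HLT} gives $q^2\le 49$, hence $q\le 7$ --- note the application is over $\F_{q^2}$, not $\F_q$, which is what makes the bound usable. Finally the cases $q\in\{4,5,7\}$ are not covered by any citation: they are excluded by a finite search through the possible real Weil polynomials $H(T)=T^4-(q+1)T^3-3qT^2+aT+b$ of \eqref{polF}, whose first coefficients are pinned down by $n_1=n_2=0$ and whose roots must be real and lie in $[-2\sqrt q,2\sqrt q]$. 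None of this is in your proposal, and it cannot be repaired merely by citing \cite{HLT} correctly, since ``$C$ pointless over $\F_q$'' does occur for $q$ in the critical range.
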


For $q\in\{2,3\}$, we produce in Section \ref{nodal} explicit examples of cubic threefolds with a single singular point, of type $A_1$, but containing no $\F_q$-lines: the  bound in the corollary is the best possible.

\begin{proof}
Assume that $X$ contains no $\F_q$-lines.\ Proposition \ref{propnodal} then implies that either $n_1=n_2=0$, or $n_1=n_2=1$ and $g_3^1$ and $h_3^1$ are distinct  and defined over $\F_q$.\ The latter case cannot in fact occur: if $C(\F_q)=\{x\}$, we write $g^1_3\equiv x+x'+x''$.\ Since $g_3^1$ is defined over $\F_q$, so is $x'+x''$, hence $x'$ and $x''$ are both defined over $\F_{q^2}$.\ But  $C(\F_{q^2})=\{x\}$, hence $x'=x''=x$ and $g^1_3\equiv 3x$.\ We can do the same reasoning with $h^1_3$ to obtain $h^1_3\equiv 3x\equiv g^1_3 $, a  contradiction.

Therefore, we have $n_1=n_2=0$.\ According to  \cite[Theorem 1.2]{HLT}, every genus-4 curve over $\F_q$ with $q>49$ has an $\F_q$-point so we obtain $q\le7$. 

Because of the     reciprocity relation \eqref{rec}, there is a  monic degree-4 polynomial $H$ with integral coefficients that satisfies $Q_1(C,T) = T^4  H(T + q/T)$.\
 If $\omega_1,\dots,\omega_4,\bar \omega_1,\dots,\bar\omega_4$ are the roots of $Q_1(C,T)$ (see Section \ref{sso}), with $|\omega_j|=\sqrt{q}$, the roots of $H $ are the $r_j:=\omega_j+\bar \omega_j$, and
$$q+1-n_1=\sum_{1\le j\le 4}r_j\quad,\quad q^2+1-n_2=\sum_{1\le j\le 4}(\omega_j^2+\bar \omega_j^2)=\sum_{1\le j\le 4}(r_j^2-2q).$$
 Since $n_1=n_2=0$, we obtain
 $
 \sum_{1\le j\le 4}r_j=q+1$ and $
\sum_{1\le j\le 4} r_j^2 = q^2+8q+1$, so that
  $\sum_{1\le i<j\le 4} r_ir_j = -3q$;   we can therefore write
\begin{equation}
\label{polF}
 H(T) = T^4 - (q +1)T^3 - 3qT^2 + aT + b.
\end{equation}   
 Finally, since $|r_j|\le2\sqrt q$ for each $j$, we also have $|b|=|r_1r_2r_3r_4|\le 16q^2$ and
  $|a|=|\sum_{j=1}^4b/r_j|\leq 32q^{3/2}$.
A computer search  done with these bounds  
shows that   polynomials of the form~\eqref{polF} 
with four real roots and $q\in\{2,3,4,5,7\}$ only exist for $q\le 3$, which proves the corollary. \end{proof}

\begin{rema}\upshape
For $q\in\{2,3\}$, the computer  gives a list of all   
  possible polynomials   
$$
(q=2)\quad H(T)=\left\{\begin{smallmatrix}  T^4 - 3T^3 - 6T^2 + 24T - 16\\
     T^4 - 3T^3 - 6T^2 + 24T - 15&(\star)\\
     T^4 - 3T^3 - 6T^2 + 23T - 13\\
     T^4 - 3T^3 - 6T^2 + 22T - 10&(\star)\\
     T^4 - 3T^3 - 6T^2 + 21T - 7&\\
     T^4 - 3T^3 - 6T^2 + 18T + 1&(\star)\end{smallmatrix}\right. ,\quad
(q=3)\quad H(T)=\left\{\begin{smallmatrix}
  T^4 - 4T^3 - 9T^2 + 48T - 36&(?)\\
     T^4 - 4T^3 - 9T^2 + 47T - 32&(\star)\\
     T^4 - 4T^3 - 9T^2 + 46T - 29&(\star)\\
     T^4 - 4T^3 - 9T^2 + 44T - 22&(?)
\end{smallmatrix}\right. .$$
The nodal cubics of Section~\ref{nodal}, defined over $\F_2$ and
$\F_3$,  correspond to the 
polynomials $T^4 - 3T^3 - 6T^2 + 24T - 15$ and 
$T^4 - 4T^3 - 9T^2 + 47T - 32$, respectively.\ Over $\F_2$, it is possible to list all   genus-4 canonical curves  and one obtains that only the   polynomials marked with   $(\star)$ actually occur (all three are irreducible). 

Over $\F_3$, our computer searches show that the two polynomials marked with   $(\star)$ actually occur (both are irreducible).\ We do not know whether the other two, $T^4 - 4T^3 - 9T^2 + 48T - 36 = (T - 1)(T - 3)(T^2 - 12)$ and $
 T^4 - 4T^3 - 9T^2 + 44T - 22 = (T^2 - 4T + 2)(T^2 - 11)$ (marked with (?)), actually occur.
 \end{rema}

\subsection{Examples of cubic threefolds}\label{secf2}

In this section, we present some of our calculations and illustrate our techniques for some cubic threefolds.\ We begin with    Fermat cubics  
(Section \ref{exfermat}), which have good reduction in all \cha s but 3.\ The case of general Fermat hypersurfaces was worked out   by Weil in \cite{weil} (and was an inspiration for his famous conjectures discussed in Section \ref{sedwc}).\ We explain how Weil's calculations apply to the zeta function of Fermat cubics (Theorem \ref{thweil}) and we compute, in dimension 3,   the zeta function of their surface of lines (Corollary \ref{coroweil}). 

The Fermat cubic threefold  contains the line $L:=\langle  (1 , -1 , 0 ,0,0),  (0, 0 , 1 , -1 , 0) \rangle$ and we  compute the discriminant quintic $\Gamma_L\subset \P^2$ defined in Section \ref{secbsd}, exhibiting strange behavior in \cha\ 2.  

In Section \ref{kleins}, we turn our attention to the Klein cubic, which has good reduction in all \cha s but 11.\ It also contains an ``obvious'' line $L'$ and we compute the discriminant quintic $\Gamma_{L'}\subset \P^2$, again exhibiting strange behavior in \cha\ 2.\ Using the Bombieri--Swinnerton-Dyer method, we determine the zeta function of $F(X)$ over $\F_p$, for $p\le  13$.\ We also compute the geometric Picard numbers of the reduction of $F(X)$ modulo any prime, using the existence of an isogeny between $A(F(X))$ and the self-product of an elliptic curve.

 In Section \ref{algos}, we compute, using the same method, the zeta function of $F(X)$ of a ``random" cubic threefold $X$ containing a line, over the fields $\F_5$, $\F_7$,   $\F_{23}$,   $\F_{29}$, and $\F_{31}$.\ Note that existing programs are usually unable to perform calculations in such   high \cha s.

In Section \ref{noli}, we present   examples,  found by computer searches, of smooth   cubic threefolds defined over $\F_2$, $\F_3$, $\F_4$, or $\F_5$ with no lines.\ We were unable to find examples over $\F_q$   for the remaining values $q\in\{7,8,9\}$ (by Theorem \ref{prop6},  there are always $\F_q$-lines for $q\ge 11$).
 For the example over $\F_2$, we compute directly the number of points over small extensions and deduce the   polynomial $P_1$ for the Fano surface $F(X)$.\ For the example over $\F_3$, we obtain again   the   polynomial $P_1$ for the Fano surface $F(X)$ by applying the Bombieri--Swinnerton-Dyer method over $\F_9$. 
 
 Finally, in Section \ref{nodal}, we exhibit  cubic threefolds with one node but no lines, defined over $\F_2$ or $\F_3$, thereby proving that the bound in Corollary \ref{coronodal} is optimal.

\subsubsection{Fermat cubics}\label{exfermat}

The $n$-dimensional Fermat cubic $X^n\subset \P^{n+1}_{\Z}$ is defined  by the equation
\begin{eqnarray}\label{equ2}
x_1^3 + \dots+x_{n+2}^3=0. 
\end{eqnarray}
It has good reduction at every prime $p\ne 3$.

\begin{rema}\label{cyc}\upshape
In general, if 
$q \equiv 2\pmod 3$ and $X\subset \P^{n+1}_{\F_q}$  is a cyclic cubic hypersurface 
defined by the equation $ f(x_1,\dots,x_{n+1})+x_{n+2}^3 =0$,   the projection  
$\pi \colon  X\to\P^n_{\F_q}$ defined by $(x_1,\dots,x_{n+2})
\mapsto (x_1,\dots,x_{n+1})$ 
induces a bijection $X(\F_q)\to \P^n(\F_q)$, because the map $x\mapsto x^3$ is a bijection of $\F_q$ (\cite[Observation 1.7.2]{ked}).
 \end{rema}

The remark gives in particular $\Card\bigl(X^n(\F_2)\bigr)=\Card\bigl( \P^n(\F_2)\bigr)=2^{n+1}-1$.\ For the number of points of $X^n(\F_4)$,  observe that the cyclic cover $\pi$ is  3-to-1 outside its branch divisor 
$V(f)$.\ Let $ (x_1,\dots,x_{n+1})\in \P^n(\F_4)$.\ Since $x^3\in \{0,1\}$ for any $x\in
\F_4$, either $x_1^3+\dots+x_{n+1}^3=0$ and the inverse image by $\pi$ has one $\F_4$-point, or   $x_1^3+\dots+x_{n+1}^3=1$ and the inverse image by $\pi$ has three $\F_4$-points.\ One obtains the inductive formula
$$\Card\bigl(X^n(\F_4)\bigr) =\Card\bigl(X^{n-1}(\F_4)\bigr)+ 3 \bigl(\Card\bigl(\P^n(\F_4)\bigr)-\Card\bigl(X^{n-1}(\F_4)\bigr)\bigr)
.$$
Since $\Card\bigl(X^0(\F_4)\bigr)=3$, we get
\begin{equation*} 
 \Card\bigl(X^n(\F_4)\bigr) = \frac{1}{3}\left(2^{2n+3} - (-2)^{n+1} - 1 \right).
\end{equation*}

Using \eqref{gsf}, we see that the number of $\F_2$-lines on   $X^n_{\F_2}$  is
\[
 \frac{(2^{n+1}-1)^2 - 2 (1+2^n)(2^{n+1}-1)
 + \frac{1}{3}(2^{2n+3} - (-2)^{n+1} - 1 )}{8}
 =
 \frac{2^{2n}+1+((-1)^n-9)2^{n-2}}{3}.
\]
For example,  the  15 $\F_2$-lines contained in $X^3_{\F_2}$ are the line $L_{\F_2}$ and its images by permutations of the coordinates.

In fact,     general results are available in the literature on   the zeta function  of Fermat hypersurfaces over finite fields (starting with \cite{weil}; see also \cite[Section 3]{shka}), although they do not seem to have been spelled out for cubics.\ Let us first define
$$
P^0_n(X^n_{\F_p},T)=  \begin{cases} P_n(X^n_{\F_p},T)&\quad\hbox{if $n$ is odd,}\\
\frac{P_n(X^n_{\F_p},T)}{1-p^{n/2}T}&\quad\hbox{if $n$ is even}
\end{cases}
$$
(this is the reciprocal characteristic polynomial of the Frobenius morphism acting on the {\em primitive} cohomology of $ X^n_{\F_p}$)  and set $
b_n^0(X^n):=\deg(P^0_n )$; this is $b_n(X^n)$ if $n$ is odd, and $b_n(X^n)-1$ if $n$ is even.
 
\begin{theo}[Weil]\label{thweil}
Let    $X^n\subset \P^{n+1}_\Z$ be the Fermat cubic hypersurface.\ Let $p$ be a prime number other than $3$. 
\begin{itemize}
\item If $p \equiv 2\pmod 3$, we have
$$P^0_n(X^n_{\F_p},T)=(1-(-p)^nT^2)^{b^0_n(X^n)/2}.
$$
\item If $p \equiv 1\pmod 3$,  one can write uniquely  $4p=a^2+27b^2$ with $a\equiv 1\pmod 3$ and $b> 0$, and
$$P^0_n(X^n_{\F_p},T)=  \begin{cases}
1+aT+pT^2&\quad\hbox{when $n=1$,}\\
(1-pT)^6&\quad\hbox{when $n=2$,}\\
(1+apT+p^3T^2)^5&\quad\hbox{when $n=3$,}\\
(1-p(2p-a^2)T+p^4T^2)(1-p^2T)^{20}&\quad\hbox{when $n=4$,}\\
 (1+ap^2T+p^5T^2)^{21}&\quad\hbox{when $n=5$.}
\end{cases}$$
 \end{itemize}
\end{theo}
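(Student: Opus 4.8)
The plan is to derive the theorem from Weil's determination of the zeta functions of diagonal hypersurfaces in \cite{weil} (see also \cite[Section~3]{shka}), specialized to degree~$3$ and to dimensions $n\le 5$. Recall that over $\overline{\F_p}$ the group $\mu_3^{n+2}/\mu_3$ acts on $X^n$ and splits the primitive cohomology $H^n_{\mathrm{prim}}(\overline{X^n},\Q_\ell)$ into one-dimensional eigenspaces $V_{\vec e}$, indexed by the tuples $\vec e=(e_1,\dots,e_{n+2})\in\{1,2\}^{n+2}$ with $e_1+\dots+e_{n+2}\equiv 0\pmod 3$, the group $\gS_{n+2}$ permuting the $V_{\vec e}$ and complex conjugation acting by $\vec e\mapsto(3-e_1,\dots,3-e_{n+2})$. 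The first step is elementary combinatorics: there are $2,6,10,22$ and $42$ such tuples for $n=1,2,3,4,5$, which gives $b^0_n(X^n)$, and grouping the tuples by $\gS_{n+2}$-orbit and by the conjugation involution $\vec e\mapsto 3-\vec e$ already predicts the shape of each $P^0_n$.

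Assume first $p\equiv 1\pmod 3$, so that $\F_p^\times$ carries two nontrivial cubic characters $\chi,\bar\chi$ and each $V_{\vec e}$ is $F^*$-stable. Weil's computation gives the eigenvalue of $F^*$ on $V_{\vec e}$, up to an explicit sign, as $\frac1p\,g(\chi^{e_1})\cdots g(\chi^{e_{n+2}})$; here the product $\chi^{e_1}\cdots\chi^{e_{n+2}}=\chi^{e_1+\dots+e_{n+2}}$ is trivial because the exponent is divisible by $3$. Using $g(\chi)g(\bar\chi)=p$ and $g(\chi)^3=p\pi$, where $\pi:=J(\chi,\chi)$, every such eigenvalue reduces to a monomial in $p$ and $\pi$ in which $\pi$ occurs to at most the second power. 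Feeding in the classical evaluation $\pi\bar\pi=p$ and $\pi+\bar\pi=-a$, with $4p=a^{2}+27b^{2}$ and $a\equiv1\pmod 3$ (equivalently, the point count of the Fermat cubic plane curve, which is the case $n=1$), and regrouping the factors $(1-\omega T)$ according to the orbit and conjugation structure found above, reproduces the polynomials displayed in the theorem; the Weil estimate $|\omega|=p^{n/2}$ fixes the remaining signs.

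Now assume $p\equiv 2\pmod 3$. Then $\F_p^\times$ has no nontrivial cubic character, and by Remark~\ref{cyc} one has $N_r(X^n_{\F_p})=N_r(\P^n)$ for every odd~$r$ (for which $p^r\equiv 2\pmod 3$); hence the primitive Frobenius eigenvalues $\omega_i$ satisfy $\sum_i\omega_i^r=0$ for all odd $r$, so the multisets $\{\omega_i\}$ and $\{-\omega_i\}$ have the same power sums in every degree and therefore coincide. Thus the $\omega_i$ occur in pairs $\{\omega,-\omega\}$, and $P^0_n$ is a product of factors $1-\omega^2T^2$. To evaluate $\omega^2$ I would pass to $\F_{p^2}$: there $p^2\equiv 1\pmod 3$, but $p$ is inert in $\Z[\zeta_3]$, so the Jacobi sums computed as in the previous paragraph are now forced to be rational powers of $p$ (with $g_{\F_{p^2}}(\chi)^3=-p^{3}$, a sign one can also read off from the supersingularity of the genus-$1$ curve $X^1_{\F_p}$), whence $F^{*2}=F^*_{\F_{p^2}}$ acts on $H^n_{\mathrm{prim}}(\overline{X^n},\Q_\ell)$ by the scalar $(-p)^n$, and $P^0_n(X^n_{\F_p},T)=(1-(-p)^nT^2)^{b^0_n(X^n)/2}$.

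The difficulty is bookkeeping rather than conceptual: carrying out cleanly the combinatorial classification of the eigenspaces and of their orbits under $\gS_{n+2}$ and conjugation, and---the genuinely delicate point---tracking all the signs (the factors $\chi(-1)$ and the remaining $\pm1$'s in Weil's formulas, and the sign of $g_{\F_{p^2}}(\chi)^3$). Once these are pinned down, the theorem follows by substitution into Weil's formulas.
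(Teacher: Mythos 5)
Your overall route is the same as the paper's: both arguments reduce to Weil's computation for diagonal cubics, index the primitive Frobenius eigenvalues by the set $\gU_n$ of tuples in $\{1,2\}^{n+2}$ with coordinate sum divisible by $3$ (your counts $2,6,10,22,42$ agree with the paper's enumeration), anchor everything at $n=1$ (Gauss's count attached to $4p=a^2+27b^2$ when $p\equiv1\pmod3$, supersingularity of the plane Fermat cubic when $p\equiv2\pmod3$), and, for $p\equiv2\pmod3$, first prove that $P^0_n$ is even using Remark \ref{cyc} and then show that the Frobenius of $\F_{p^2}$ acts on the primitive cohomology by the scalar $(-p)^n$. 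For that last point the paper argues geometrically, via the inductive structure of Fermat varieties (\cite[Theorem 2.10]{shka}) and K\"unneth, while you recompute Gauss sums over $\F_{p^2}$; both work, and your appeal to supersingularity of $X^1$ to get $g_{\F_{p^2}}(\chi)^3=-p^3$ is the same anchor the paper uses.

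The genuine gap is in the case $p\equiv1\pmod3$, at exactly the point where the paper does its real work. After reducing each eigenvalue to a monomial in $p$ and $\pi=J(\chi,\chi)$, you only know it up to a unit of $\Z[\zeta]$ (equivalently, Stickelberger gives only the ideal $(j(\alpha))=\gp^{A(\alpha)}\bar\gp^{A(\bar\alpha)}$), and your proposed mechanism for removing this ambiguity --- ``the Weil estimate $|\omega|=p^{n/2}$ fixes the remaining signs'' --- cannot work: $\omega$, $-\omega$, and more generally $\zeta^k\omega$ have the same absolute value in every complex embedding, so purity sees nothing. This is not cosmetic: for $n=3$ it is exactly what distinguishes $(1+apT+p^3T^2)^5$ from $(1-apT+p^3T^2)^5$, and the same ambiguity affects $n=4,5$. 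Your fallback (``track all the signs in Weil's exact formulas'') is possible in principle, but it is precisely the delicate bookkeeping you defer to the end; the paper instead resolves the unit ambiguity structurally, proving by induction via the embeddings \cite[(2.17)]{shka} that $j(\alpha)=\omega^{A(\alpha)}\bar\omega^{A(\bar\alpha)}$ on the nose, with the unit pinned down at $n=1$ by Gauss's theorem. To complete your argument you would need either to carry out the sign-tracking in \cite{weil} in full (the normalization $\chi(-1)=1$, the sign relating $j(\alpha)$ to Frobenius eigenvalues through the trace formula, and the identity $g(\chi)^3=p\pi$), or to import the paper's inductive step; as written, the decisive step is missing.
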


As will become clear from the proof, it would be possible to write down  (complicated) formulas for all $n$ in the case $p \equiv 1\pmod 3$.\ We leave that exercise to the interested reader and restrict ourselves to the lower-dimensional cases.

\begin{proof}
Assume first  $p \equiv 2\pmod 3$.\  It follows from Remark \ref{cyc} that the polynomial $P^0_n(X^n_{\F_p},T)$ is even (this is explained by \eqref{mr4} and \eqref{mr44} when $n=4$).\ It is therefore equivalent to prove $P^0_n(X^n_{\F_{p^2}},T)=(1-(-p)^nT )^{b^0_n(X^n) }$.\ We follow the geometric argument of \cite{shka}.

It is well known that $P_1(X^1_{\F_p},T)=1+pT^2$, hence $P_1(X^1_{\F_{p^2}},T)=(1+pT)^2$.\ In other words, the Frobenius morphism of $\F_{p^2}$ acts on the middle cohomology   of $X^1_{\F_{p^2}}$ by multiplication by~$-p$.\ By the K\"unneth formula, it acts by multiplication by $ (-p)^2$ 
on the middle cohomology   of $X^1_{\F_{p^2}}\times X^1_{\F_{p^2}}$.\ The proof by induction on $n$ of \cite[Theorem 2.10]{shka}  then applies and gives that the Frobenius morphism acts by   multiplication by $ (-p)^n$ 
on the middle cohomology   of $X^n_{\F_{p^2}}$.

Assume now  $p \equiv 1\pmod 3$.\ The number of points of $X^1(\F_p)$ was computed by Gauss (\cite[Theorem 4.2]{sita}): writing   $4p=a^2+27b^2$ as in the theorem,  one has $\Card (X^1(\F_p) )=p+1+a$, \ie, $P_1(X^1_{\F_p},T)=1+aT+pT^2=:(1-\omega T)(1-\bar\omega T)$.\ 
In other words, the  eigenvalues of the Frobenius morphism of $\F_p$ acting on the first cohomology group are $\omega$ and $\bar \omega$.\ They are therefore the Jacobi sums denoted by $j(1,2)$ and $j(2,1)$ in \cite[(3.1)]{shka}, and also the generators of the prime ideals $\gp$ and $\bar \gp$ in 
 $\Z[\zeta]$ ($\zeta= \exp(2i\pi/3)$) such that $(p)=\gp \bar \gp$.
 
 The eigenvalues of the Frobenius morphism   acting on the   primitive  middle cohomology   of $X^n_{\F_p}$ are denoted $j(\alpha)$ by Weil, where $\alpha$ runs over the set
 $$\gU_n=\{(\alpha_0,\dots,\alpha_{n+1})\in \{1,2\}^{n+2}\mid \alpha_0+\dots+\alpha_{n+1} \equiv 0\pmod3\}.$$
 The   ideal $(j(\alpha))$ in   $\Z[\zeta]$ is invariant under permutations of the $\alpha_i$ and its decomposition is   computed by Stickelberger (see \cite[(3.10)]{shka}):
 $$(j(\alpha))=\gp^{A(\alpha)} \bar\gp^{A(\bar\alpha)},$$
 with $A(\alpha)=\bigl\lfloor\sum_{j=1}^{n+1}  \frac{\alpha_j}{3} \bigr\rfloor$ and
 $\bar\alpha_j=3-\alpha_j$.   
  
The elements of $\gU_1$ are $(1,1,1)$ and $(2,2,2)$, and the corresponding values of  $A$ are $0$ and $1$.\ The eigenvalues are therefore (up to multiplication by a unit of  $\Z[\zeta]$), $\omega$ and $\bar\omega$.\ By Gauss' theorem, we know they are exactly $\omega$ and $\bar\omega$.\ By induction on $n$, it   then follows  from   the embeddings  \cite[(2.17)]{shka} that 
$$ j(\alpha) =\omega^{A(\alpha)} \bar\omega^{A(\bar\alpha)}.$$

The elements of $\gU_2$ are (up to permutations) $( 1,1,2,2)$  and the corresponding value  of  $A$ is~$1$.\ The only eigenvalue is therefore  $\omega \bar\omega=p $, with multiplicity $\binom42$.  

The elements of $\gU_3$ are (up to permutations) $( 1,1,1,1,2)$  and $( 1,2,2,2,2)$, and the corresponding values of  $A$ are $1$ and $2$.\ The eigenvalues are therefore   $\omega^2 \bar\omega=p \omega$ and $p\bar\omega$, with multiplicity~$5$.  

The elements of $\gU_4$ are (up to permutations) $( 1,1,1,1,1,1)$, $( 1,1,1,2,2,2)$,\break and $( 2,2,2,2,2,2)$, and the corresponding values of  $A$ are $1$, $ 2$, and $3$.\ The eigenvalues are therefore  $p\omega^2  $ and $p\bar\omega^2  $,  with multiplicity $1$, and $p^2$,  with multiplicity $\binom63$.

The elements of $\gU_5$ are (up to permutations) $( 1,1,1,1,1,2,2)$ and $( 1,1,2,2,2,2,2)$,  and the corresponding values of  $A$ are  $ 2$  and $3$.\ The eigenvalues are therefore   $p^2\omega   $ and $p^2\bar\omega $, with multiplicity~$\binom72$.\ This finishes the proof of the theorem.
 \end{proof}

\begin{coro}\label{coroweil}
Let    $X\subset \P^4_\Z$ be the Fermat cubic threefold defined by the equation $x_1^3+\cdots+x_5^3=0$ and let $F(X)$ be its surface of lines.\ Let $p$ be a prime number other than $3$. 

The Albanese variety $A(F(X))_{\F_p}$ is isogenous to $E_{\F_p}^5$, where $E$ is the Fermat plane cubic curve.\ Moreover,
\begin{itemize}
\item if $p \equiv 2\pmod 3$, we have  
$$Z(F(X)_{\F_p},T)=\frac{(1+pT^2)^5(1+p^3T^2)^5}{(1-T)(1-p^2T)(1+pT)^{20}(1-pT)^{25} }
,$$
the Picard number of $F(X)_{\F_p}$ is 25 and that of $F(X)_{\F_{p^2}}$ is 45, and the abelian variety $A(F(X))_{\F_p}$ is supersingular;
\item if $p \equiv 1\pmod 3$, we have (with the notation of Theorem \ref{thweil})
$$Z(F(X)_{\F_p},T)=\frac{(1+aT+pT^2)^5(1+apT+p^3T^2)^5}{(1-T)(1-p^2T)(1+(2p-a^2)T+p^2T^2)^{10}(1-pT)^{25}  }
,$$
the Picard and absolute Picard numbers of $F(X)_{\F_p}$ are 25, and the abelian variety $A(F(X))_{\F_p}$ is ordinary.
\end{itemize}
\end{coro}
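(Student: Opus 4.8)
The plan is to combine Theorem~\ref{thweil} with Theorem~\ref{main} in the obvious way, and then to read off the Picard numbers and the supersingular/ordinary dichotomy from the general criteria collected in Section~\ref{sso} and Corollary~\ref{pic}. The one genuinely geometric claim, that $A(F(X))_{\F_p}$ is isogenous to $E_{\F_p}^5$, will follow from comparing characteristic polynomials of Frobenius: since $F(X)$ is a Fano surface, $A(F(X))$ is its Albanese variety, and Theorem~\ref{main} identifies $Q_1(F(X),T)$ (equivalently $Q_1(A(F(X)),T)$) with $P_3(X,T/q)$ run backwards. For the Fermat threefold, Theorem~\ref{thweil} gives $P^0_3(X_{\F_p},T)=P_3(X_{\F_p},T)=(1+ap\,T+p^3T^2)^5$ when $p\equiv1\pmod 3$, and $(1+p^3T^2)^5$ when $p\equiv2\pmod 3$; dividing the roots by $p$ shows $Q_1(A(F(X)),T)=Q_1(E,T)^5$ in both cases, where $E$ is the Fermat plane cubic (for which $P_1(E_{\F_p},T)=1+aT+pT^2$ resp.\ $1+pT^2$, as recalled in the proof of Theorem~\ref{thweil}). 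By the Honda--Tate theorem the $\F_p$-isogeny class of an abelian variety is determined by the Frobenius characteristic polynomial, so $A(F(X))_{\F_p}\sim E_{\F_p}^5$.

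First I would write down $Z(F(X)_{\F_p},T)$ by substituting the factorization of $P_3(X_{\F_p},T)$ from Theorem~\ref{thweil} into the formula \eqref{zetafx} of Theorem~\ref{main}. Concretely, if $\omega_1,\dots,\omega_{10}$ denote the roots of $P_1(F(X),T)$ (so $q\omega_j$ are the roots of $P_3(X,T)$), then in the case $p\equiv2\pmod 3$ the $\omega_j$ come in five conjugate pairs $\pm i\sqrt p$, so that $P_1(F(X),T)=(1+pT^2)^5$, $P_3(F(X),T)=(1+p^3T^2)^5$, and the products $\omega_j\omega_k$ over $j<k$ yield $(1+pT)^{20}(1-pT)^{25}$ for $P_2(F(X),T)$ after a short bookkeeping of signs (twenty mixed pairs $\omega_j\bar\omega_k$ giving $+p$, and the remaining $45-20=25$ giving $-p$); this reproduces the displayed $Z$. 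In the case $p\equiv1\pmod3$ the same substitution with $P_3(X,T)=(1+apT+p^3T^2)^5$ gives $P_1(F(X),T)=(1+aT+pT^2)^5$, $P_3(F(X),T)=(1+apT+p^3T^2)^5$, and for $P_2$ one gets $(1-pT)^{25}$ from the $j=k'$-conjugate products together with $(1+(2p-a^2)T+p^2T^2)^{10}$ from the genuinely mixed products $\omega_j\omega_k$ with $\{j,k\}$ not a conjugate pair — here one uses $\omega\bar\omega=p$ and $\omega^2+\bar\omega^2=a^2-2p$ to identify the quadratic factor. This is the only place where a small amount of explicit computation is needed, and it is routine.

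Next I would extract the arithmetic invariants. For the Picard number, Corollary~\ref{pic} reduces everything to the multiplicities of the roots of $Q_1(F(X),T)$. When $p\equiv2\pmod3$: if $p$ is a square then $\pm\sqrt p$ are the only roots, each with multiplicity $10$... but $p$ prime is a square only if — it never is, so $\pm i\sqrt p\notin\R$ and we are in the ``$q$ not a square, $Q_1=(1+pT^2)^5$'' case, i.e.\ one conjugate pair of multiplicity $5$; Corollary~\ref{pic} then gives $\rho(F(X)_{\F_p})=5^2=25$, and over $\F_{p^2}$ the roots become $\pm p$ each with multiplicity $10$, giving $\rho=45$ and, by condition (ii) of Section~\ref{sso}, $A(F(X))_{\F_p}$ supersingular. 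When $p\equiv1\pmod3$: $a^2<4p$ forces $\omega,\bar\omega$ non-real and distinct, so $Q_1(F(X),T)$ has a single conjugate pair of multiplicity $5$, again $\rho=25$ both geometrically and over $\F_p$; ordinariness follows because the middle coefficient of $Q_1(E,T)=T^2+aT+p$ is $a$, which is prime to $p$ since $a^2\equiv4p-27b^2\not\equiv0$, hence $E$ is ordinary and therefore so is $E^5\sim A(F(X))$ (the coefficient of $T^5$ in $Q_1(A,T)=Q_1(E,T)^5$ is $\equiv a^5\not\equiv0$). I do not anticipate a serious obstacle anywhere; the ``hard part,'' such as it is, is simply organizing the sign bookkeeping in $P_2(F(X),T)$ and invoking Honda--Tate cleanly for the isogeny statement — both standard.
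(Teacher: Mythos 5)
Your proposal is correct and takes essentially the same route as the paper: substitute Theorem \ref{thweil} into Theorem \ref{main} to get the zeta function, identify $Q_1(A(F(X)),T)=Q_1(E,T)^5$ and conclude the isogeny from the fact that the characteristic polynomial of Frobenius determines the isogeny class (the paper cites \cite[Appendix I, Theorem 2]{MumfordAV}; this is Tate's isogeny theorem rather than Honda--Tate, a harmless attribution slip), then read off the Picard numbers from Corollary \ref{pic} and the ordinary/supersingular dichotomy from the criteria of Section \ref{sso} applied to $E$. One small bookkeeping correction for $p\equiv 2\pmod 3$: there are $25$ mixed pairs $\omega_j\bar\omega_k$ (all with product $+p$, giving $(1-pT)^{25}$) and $20$ same-sign pairs (product $-p$, giving $(1+pT)^{20}$), so your parenthetical count is inverted, though the final formula you state is the correct one.
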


\begin{proof} Theorems \ref{main} and \ref{thweil} imply that the characteristic polynomials of the Frobenius morphisms acting on $H^1$ are the same for the   abelian varieties $A(F(X))_{\F_p}$ and $E_{\F_p}^5$; they are therefore isogenous (\cite[Appendix I, Theorem 2]{MumfordAV}).\ The statements about $A(F(X))_{\F_p}$ being supersingular or ordinary follow from the analogous statements about $E_{\F_p}$.

The values of the zeta functions also follow from Theorem \ref{main} and \ref{thweil},  and  the statements about the  Picard numbers from Corollary \ref{pic}. \end{proof}

 {\em We now restrict ourselves to the  Fermat cubic threefold $X\subset \P^4_\Z$} ($n=3$).
 With the notation of Theorem \ref{thweil}, we compute   (see Remark \ref{atc})
  \begin{equation}\label{atc1}
D_p\bigl(F(X)_{\F_p}\bigr)=\begin{cases}
2^{20} &\hbox{ if } p \equiv 2\pmod 3;\\
\frac{3^{30}b^{20}}{p^{10}}&\hbox{ if } p \equiv 1\pmod 3.
\end{cases}\end{equation}

Next, we parametrize planes containing the line $L:=\langle  (1 , -1 , 0 ,0,0),  (0, 0 , 1 , -1 , 0) \rangle\subset X$ by the   $\P^2$ defined by $x_1=x_3=0$ and determine the discriminant quintic $\Gamma_L\subset \P^2$ (see Section~\ref{secbsd}).

\begin{lemm}\label{fermat2}
In the   coordinates $x_2,x_4, x_5$, an equation of the discriminant quintic $\Gamma_L\subset \P^2$ is $x_2x_4(x_2^3+x_4^3+4x_5^3)=0$.\ Therefore,
\begin{itemize}
\item in \cha s other than 2 and 3,  it is a nodal quintic which is the union of two lines and an elliptic curve, all defined over the prime field; 
\item in \cha\ 2, it  is the union of 5 lines meeting at the point $(0,0,1)$; 3 of them are defined over $\F_2$, the other 2 over $\F_4$.
\end{itemize}
\end{lemm}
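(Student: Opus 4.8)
The plan is to compute the matrix $M_L$ of \eqref{ml} explicitly for the Fermat cubic threefold and the chosen line $L$, then take its determinant. First I would set up coordinates: the line $L=\langle(1,-1,0,0,0),(0,0,1,-1,0)\rangle$ is cut out in $\P^4$ by the three linear equations $x_1+x_2=0$, $x_3+x_4=0$, $x_5=0$. I would therefore change variables, replacing $(x_1,\dots,x_5)$ by coordinates in which $L$ becomes $\{x_1'=x_2'=x_3'=0\}$; a convenient choice is $x_1'=x_1+x_2$, $x_2'=x_3+x_4$, $x_3'=x_5$, and $x_4'=x_1$ (or $x_1-x_2$), $x_5'=x_3$ (or $x_3-x_4$). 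Rewriting $x_1^3+\dots+x_5^3=0$ in these coordinates and collecting terms by degree in $(x_4',x_5')$ gives the normal form $f+2q_1x_4'+2q_2x_5'+\ell_1 x_4'^2+2\ell_2 x_4'x_5'+\ell_3 x_5'^2=0$ used in Section~\ref{secbsd}, where $f,q_1,q_2,\ell_1,\ell_2,\ell_3$ are forms in $x_1',x_2',x_3'$ of degrees $3,2,2,1,1,1$.

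Carrying this out: with $x_1=\tfrac12(x_1'+x_4'-\cdots)$ one finds, after a harmless rescaling, $\ell_1=\ell_3=3x_1'$ or $3x_2'$ (the coefficients of the pure squares $x_4'^2$, $x_5'^2$), $\ell_2=0$, and $q_1,q_2$ linear-combination-free of cross terms, while $f$ is essentially $\tfrac14(x_1'^3+x_2'^3+4x_3'^3)$ up to the precise choice of splitting. The key point is that the off-diagonal structure forces $M_L$ to be, up to scalars on rows/columns, of the shape $\begin{pmatrix} f & q_1 & q_2\\ q_1 & \ell_1 & 0\\ q_2 & 0 & \ell_3\end{pmatrix}$ with $\ell_1,\ell_3$ proportional to $x_2',x_4'$ (renaming to $x_2,x_4$ as in the statement) and the expansion of the determinant collapsing to $\ell_1\ell_3\bigl(f\cdot(\text{unit}) + \cdots\bigr)$. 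I expect the determinant to simplify to a scalar multiple of $x_2 x_4(x_2^3+x_4^3+4x_5^3)$; the ``$4$'' is exactly the coefficient $1+1+\dots$ that remains from the $x_5^3$ term after the substitution, and it is the only place where $\cha 2$ versus $\cha\neq 2$ matters, since $4\equiv 0\pmod 2$.

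Once the equation $x_2x_4(x_2^3+x_4^3+4x_5^3)=0$ is established, the geometric description is immediate. In characteristic $\neq 2,3$: $4\neq 0$, so $x_2^3+x_4^3+4x_5^3=0$ defines a smooth plane cubic $E$ (it is a diagonal cubic with all coefficients nonzero and $\cha\neq 3$), and $\Gamma_L=\{x_2=0\}\cup\{x_4=0\}\cup E$ is a union of two lines and an elliptic curve, all defined over the prime field; one checks the singular locus consists of the pairwise intersections, which are ordinary nodes, so $\Gamma_L$ is a nodal quintic. In characteristic $2$: $4=0$, so the cubic factor degenerates to $x_2^3+x_4^3=(x_2+x_4)(x_2^2+x_2x_4+x_4^2)$, and $x_2^2+x_2x_4+x_4^2$ factors over $\F_4$ as $(x_2+\omega x_4)(x_2+\omega^2 x_4)$ with $\omega\in\F_4$ a primitive cube root of unity. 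Thus $\Gamma_L$ becomes the union of the five lines $x_2=0$, $x_4=0$, $x_2+x_4=0$, $x_2+\omega x_4=0$, $x_2+\omega^2 x_4=0$; each of these passes through $(0,0,1)$ since none involves $x_5$, three are $\F_2$-rational and the remaining two are conjugate over $\F_4$.

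**Anticipated main obstacle.** The only real work is the coordinate change and the determinant computation — bookkeeping rather than ideas. The delicate point is pinning down the exact coefficients (in particular the $4$, and checking no spurious terms survive), which depends on making a clean choice of the complementary coordinates $x_4',x_5'$ so that $\ell_2=0$ and the matrix is as sparse as possible; a careless splitting produces a messier $M_L$ whose determinant still equals the claimed quintic but only after further simplification. I would do this computation by hand for the generic characteristic and then simply observe that reducing the integral equation $x_2x_4(x_2^3+x_4^3+4x_5^3)$ modulo $2$ gives the degeneration, so no separate characteristic-$2$ computation is needed.
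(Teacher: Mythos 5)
Your computation away from characteristic $2$ is sound and is essentially the paper's own: the paper parametrizes the plane spanned by $L$ and $(0,x_2,0,x_4,x_5)$ and reads off the symmetric matrix of the residual conic, which in your normal-form notation comes out as $\ell_1=3x_2$, $\ell_3=3x_4$, $\ell_2=0$, $q_1=-\frac32x_2^2$, $q_2=-\frac32x_4^2$, $f=x_2^3+x_4^3+x_5^3$ (not the value of $f$ you guessed, but this is harmless), and the determinant is $\frac94\,x_2x_4(x_2^3+x_4^3+4x_5^3)$; the geometric description in characteristic $\ne 2,3$ and the factorization into five concurrent lines, three over $\F_2$ and two over $\F_4$, then go exactly as you say.

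The gap is your final assertion that ``no separate characteristic-$2$ computation is needed.'' The identification of $\Gamma_L$ with $\{\det M_L=0\}$ is only set up for odd $q$ (Section~\ref{secbsd} assumes this: the entries $q_1,q_2,\ell_2$ of $M_L$ are halves of the cross coefficients, and indeed here $\det M_L=\frac94x_2x_4(\cdots)$ is not even integral), and in characteristic $2$ the singularity of a conic is not detected by the naive determinant of a symmetric matrix. So ``reduce the integral equation modulo $2$'' is not a definition-chasing step: what must be shown is that the locus of $x$ for which the residual conic $C_x$ is singular in characteristic $2$ is cut out by $x_2x_4(x_2^3+x_4^3)$. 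The paper proves this by a direct characteristic-$2$ argument: by the Jacobian criterion a singular point of $C_x$ must satisfy $y_1=0$, $x_2y_2^2+x_4y_3^2=0$ and $x_2^2y_2+x_4^2y_3=0$, and eliminating $(y_2,y_3)$ gives exactly $x_2x_4(x_2^3+x_4^3)=0$; only afterwards does it remark that this is ``the same'' equation reduced modulo $2$. Your shortcut can be repaired, for instance by invoking the integral half-discriminant of a ternary quadratic form $ay_1^2+by_2^2+cy_3^2+dy_2y_3+ey_1y_3+fy_1y_2$, namely $4abc+def-ad^2-be^2-cf^2$ (here equal to $4\det M_L=9\,x_2x_4(x_2^3+x_4^3+4x_5^3)$), whose vanishing characterizes singular conics in every characteristic, including $2$; but some such argument, or the paper's direct computation, has to be supplied, and it is precisely the delicate point of the characteristic-$2$ case.
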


\begin{proof}
We use the notation of the proof of Proposition \ref{proposition number of points Bombieri} (although the choice of coordinates is different).\ If $x=(0,x_2,0,x_4,x_5)\in \P^2$, the residual conic $C_x$ is defined by  the equation 
$$\frac{1}{y_1}\bigl(y_2^3+(x_2y_1-y_2)^3+y_3^3+(x_4y_1-y_3)^3+y_1^3x_5^3\bigr)=y_1^2(x_2^2+x_4^2+x_5^2)-3x_2^2y_1y_2-3x_4^2y_1y_3+3x_2y_2^2+3x_4y_3^2 
$$
 in the coordinates $(y_1,y_2,y_3)$.\ In \cha s other than 2 and 3, an equation of $\Gamma_L$ is therefore given by
$$\left|\begin{smallmatrix} 
x_2^3+x_4^3+x_5^3 & -\frac32 x_2^2 & -\frac32 x_4^2 \\
-\frac32 x_2^2 & 3x_2& 0 \\
-\frac32 x_4^2& 0 & 3x_4 
\end{smallmatrix}\right|
= \frac94 x_2x_4\left|\begin{smallmatrix} 
4(x_2^3+x_4^3+x_5^3) &  3x_2 & 3x_4 \\
 x_2^2 & 1 & 0 \\
 x_4^2& 0 & 1 
\end{smallmatrix}\right|
=\frac94 x_2x_4(x_2^3+x_4^3+4x_5^3)=0.
$$
In \cha\ 2,  the Jacobian criterion says that the singular points of $C_x$ must satisfy $y_1=0$ and $x_2y_2^2+x_4y_3^2=x_2^2y_2+x_4^2y_3=0$.\ The  curve $\Gamma_L$  is therefore defined by $\left|\begin{smallmatrix} 
x_2^{1/2}  & x_4^{1/2} \\
x_2^2 & x_4^2
\end{smallmatrix}\right| =0$, or $x_2x_4(x_2^3+x_4^3)=0$.\ It is therefore the ``same'' equation reduced modulo 2.
\end{proof}

\subsubsection{The Klein threefold}\label{kleins}
 This is the cubic threefold   $X\subset \P^4_{\Z}$  defined  by  the equation
\begin{eqnarray}\label{equ3}
x^2_1x_2 + x_2^2x_3 + x^2_3x_4 + x^2_4x_5 + x_5^2x_1=0. 
\end{eqnarray}
It has good reduction at every prime $p\ne 11$.

It  contains the line ${L'}= \langle  (1 , 0 , 0 ,0,0),  (0, 0 , 1 , 0 , 0) \rangle $ and we parametrize planes containing ${L'}$ by the   $\P^2$ defined by $x_1=x_3=0$.  

\begin{lemm}\label{klein2}
In the   coordinates $x_2,x_4, x_5$, an equation of the discriminant quintic $\Gamma_{L'}\subset \P^2$ is $x_2^5+x_4x_5^4-4x_2x_4^3x_5=0$.\
Therefore,
\begin{itemize}
\item in \cha s other than 2 and 11,  it is a geometrically irreducible quintic with a single singular point, $(0,1,0)$, which is a node; 
\item in \cha\ 2, it  is a geometrically irreducible  rational quintic with a single singular point of multiplicity 4, $(0,1,0)$.
\end{itemize}
\end{lemm}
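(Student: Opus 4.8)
The plan is to run the conic-bundle construction of Section~\ref{secbsd} for the line $L'$ in odd characteristic, to run the variant used in Lemma~\ref{fermat2} in characteristic $2$, and then to read off the geometry of the resulting plane quintic by a direct singularity analysis.

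First I would put \eqref{equ3} into the normal form of Section~\ref{secbsd} relative to $L'$, where the triple of variables cutting out the line is $(x_2,x_4,x_5)$ and the two ``extra'' variables are $(x_1,x_3)$.\ Collecting \eqref{equ3} by degree in $x_1,x_3$ gives $f=x_4^2x_5$, $q_1=\tfrac12 x_5^2$, $q_2=\tfrac12 x_2^2$, $\ell_1=x_2$, $\ell_2=0$, $\ell_3=x_4$, so the matrix $M_{L'}$ of \eqref{ml} is completely explicit; expanding $\det(M_{L'})$ along its first row gives $x_2x_4^3x_5-\tfrac14 x_4x_5^4-\tfrac14 x_2^5=-\tfrac14\bigl(x_2^5+x_4x_5^4-4x_2x_4^3x_5\bigr)$, which is the asserted equation whenever $p\neq 2$.\ In characteristic $2$ there is no determinantal shortcut: I would substitute the parametrization of the plane $P_x$ directly into \eqref{equ3}, factor out the equation of $L'$ to obtain the residual conic $C_x$ in coordinates $(y_1,y_2,y_3)$, and apply the Jacobian criterion as in Lemma~\ref{fermat2}.\ This locates the common zero $(0,x_2^2,x_5^2)$ of the three polars $\partial_{y_i}C_x$, and the condition that this point lie on $C_x$ reads $x_2^5+x_4x_5^4=0$, i.e.\ the same polynomial reduced modulo $2$.

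Next I would analyze the quintic $G:=x_2^5+x_4x_5^4-4x_2x_4^3x_5$ over $\overline{\F_p}$.\ Setting the partials of $G$ to zero, the factor $\partial G/\partial x_5=4x_4(x_5^3-x_2x_4^2)$ splits off the branch $x_4=0$ (which leads to the point $(1,0,0)$, not on the curve), while on the locus $x_5^3=x_2x_4^2$ one computes $\partial G/\partial x_4=x_5(x_5^3-12x_2x_4^2)=-11\,x_2x_4^2x_5$; this is exactly where the hypothesis $p\neq 11$ is used, since away from $11$ it forces $x_2=0$ or $x_5=0$, hence the single point $(0,1,0)$, which does lie on $G$ with all partials vanishing.\ In the chart $x_4=1$ the local equation is $x_2^5+x_5^4-4x_2x_5$, whose degree-$2$ part $-4x_2x_5$ is a nondegenerate quadratic form for $p\neq 2$, so $(0,1,0)$ is a node.\ Geometric irreducibility then follows formally: a factorization $G=G_1G_2$ into coprime factors of degrees $d_1+d_2=5$ would, by B\'ezout, make $V(G_1)$ and $V(G_2)$ meet with total local intersection multiplicity $d_1d_2\ge 4$ at points that are all singular on $\Gamma_{L'}$, impossible since the only singularity is a node (intersection multiplicity $1$), while a repeated factor would give a one-dimensional singular locus; hence $\Gamma_{L'}$ is geometrically integral.\ In characteristic $2$ the same chart gives $x_2^5+x_5^4$, a point of multiplicity $4$, and it is the unique singular point because the partials of $x_2^5+x_4x_5^4$ there are $(x_2^4,\,x_5^4,\,0)$; irreducibility is immediate since $x_2^5+x_4x_5^4$ is linear in $x_4$ with coprime coefficients $x_5^4,\,x_2^5$, and rationality follows because projection from a point of multiplicity $4$ on a plane quintic is birational onto $\P^1$.

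The step I expect to need the most care is the determination of $\Gamma_{L'}$ in characteristic $2$: unlike the odd case one has no determinantal formula, so one must write $C_x$ out explicitly from \eqref{equ3}, compute its polars $\partial_{y_i}C_x$, find their common zero, and impose that it lie on $C_x$ --- and only after this bookkeeping does the outcome turn out to be literally the mod-$2$ reduction of the odd-characteristic quintic.\ The remaining ingredients, a $3\times3$ determinant expansion and a finite case check on the partials of $G$, are routine.
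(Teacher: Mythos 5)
Your proposal is correct and follows essentially the same route as the paper: identify $f,q_1,q_2,\ell_1,\ell_2,\ell_3$ for $L'$, take $\det M_{L'}$ in odd characteristic, and in characteristic $2$ write out the residual conic and apply the Jacobian criterion as in Lemma~\ref{fermat2}; your matrix, its determinant, and the characteristic-$2$ locus $x_2^5+x_4x_5^4=0$ all agree with the paper's computation. The paper dismisses the singularity, irreducibility, and rationality assertions as ``easily determined,'' and your node/B\'ezout argument and the projection from the multiplicity-$4$ point correctly supply those omitted details.
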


\begin{proof}
We proceed as in the proof of Lemma \ref{fermat2}.\ If $x=(0,x_2,0,x_4,x_5)\in \P^2$, an    equation of the residual conic $C_x$ is 
$$\frac{1}{y_1}\bigl(y_2^2x_2y_1+x_2^2y_1^2y_3+ y_3^2x_4y_1+x_4^2y_1^2x_5y_1+x_5^2y_1^2y_2\bigr)=y_2^2x_2 +x_2^2y_1y_3+ y_3^2x_4+x_4^2y_1^2x_5+x_5^2y_1y_2 
$$
in the coordinates $(y_1,y_2,y_3)$.\ In \cha\ other than 2, an equation of $\Gamma_{L'}$ is therefore  
$$
\left|\begin{matrix} 
x_4^2 x_5 &  \frac12 x_5^2 &  \frac12 x_2^2 \\
 \frac12 x_5^2 &  x_2& 0 \\
 \frac12 x_2^2& 0 &  x_4 
\end{matrix}\right| =  \frac14   (x_2^5+x_4x_5^4-4x_2x_4^3x_5)=0.
$$
In \cha\ 2,  one checks that  $\Gamma_{L'}$  is defined by the  equation $x_2^5+x_4x_5^4=0$.\ In both cases, the singularities are easily determined.  
\end{proof}

 In \cha\ 11, $X_{\F_{11}}$ has   a unique singular point, $  (1 , 3 , 3^2 , 3^3 , 3^4)$, which has type $A_2$.\ The quintic $\Gamma_{L'}\subset \P^2$ is still geometrically irreducible,    with a node at $(0,1,0)$ and an ordinary cusp (type $A_2$) at  $(5,1,3)$.
 
In \cha\ 2, the isomorphism $(x_1,\dots,x_5)\mapsto
(x_1 + x_5,x_2 + x_5,x_3 + x_5,x_4 + x_5,
x_1+x_2+x_3+x_4+x_5)$ maps $X_{\F_2}$ to the
cyclic cubic defined by $ x_5^3 + (x_1+x_2+x_3+x_4)^3 
+ x_1^2x_2 + x_2^2x_3 + x_3^2x_4=0$.\  
Thus $M_{2m+1}(X_{\F_2}) = 0$ for any $m\ge0$ (reasoning
as in Example \ref{exfermat}).\
The computer gives $ M_2 (X_{\F_2}) = M_4(X_{\F_2})   =0$.\ Using \eqref{gsf}, we find that $X_{\F_2}$ contains 5 $\F_2$-lines; they are the line ${L'}$ and its images by the cyclic permutations of the coordinates.

By the reciprocity property \eqref{rec}, we obtain
$$P_1(F(X)_{\F_2},T)=P_3(X_{\F_2},T/2)=1+2^5T^{10}.$$
Since this polynomial has simple roots,   the Picard number of $F(X)_{\F_2}$ is $5$ (Corollary \ref{pic}).\ The eigenvalues of the Frobenius morphism $F$  are $\omega \exp( 2ik\pi/10)$, for $k\in\{0,\dots,9\}$, where $\omega^{10}=-2^5$; hence  $F^{10}$ acts by multiplication by $-2^5$.\  This implies $P_1(F(X)_{\F_{2^{10}}},T)= (1+2^5T)^{10}$.\ It follows that $F(X)_{\F_{2^{10}}}$ has maximal Picard number 45 (Corollary \ref{pic}) and that $A(F(X)) $ is  isogenous to~$E^5 $ over $\F_{2^{10}}$, where $E$ is the Fermat plane cubic defined in Section~\ref{exfermat}.

We also get $P_2(F(X)_{\F_2},T)=(1-2^5T^5)(1-2^{10}T^{10})^4=(1-2^5T^5)^5(1+2^5T^5)^4$ and
$$Z(F(X)_{\F_2},T)=\frac{(1+2^5T^{10})(1+2^{15}T^{10})}{(1-T)(1-4T)(1-2^5T^5)^5(1+2^5T^5)^4 }
.$$
 This implies
 (see Remark \ref{atcsrem})
 \begin{equation}\label{atc2}
D_2(F(X)_{\F_2})=\lim_{s\to1}\frac{P_{2}(F(X)_{\F_2},2^{-s})}{(1-2^{1-s})^{5}}=2^{4}\cdot  5^5.
\end{equation}

Over other small fields, we find, using the Bombieri--Swinnerton-Dyer method (Proposition~\ref{proposition number of points Bombieri}) and a computer,
 \begin{equation}\label{atc3}
\begin{array}{rcl}
P_1(F(X)_{\F_3},T)&=&  1+31T^5+3^5T^{10}\\
P_1(F(X)_{\F_5},T)&=& 1-57T^5+5^5T^{10}\\
P_1(F(X)_{\F_7},T)&=& 1 +7^5T^{10}\\
P_1(F(X)_{\F_{13}},T)&=& 1 +13^5T^{10}
\end{array}\qquad
\begin{array}{rcl}
D_3(F(X)_{\F_3})&=& 5^5\cdot11^2/3^{10} \\
D_5(F(X)_{\F_5})&=&11^2\cdot29^4/5^5 \\
D_7(F(X)_{\F_7})&= &2^4 \cdot5^5 \\
D_{13}(F(X)_{\F_{13}})&=&2^4\cdot 5^5  .
\end{array}
\end{equation} 
Note that $A(F(X))$ is ordinary in the first two cases and supersingular with maximal Picard number in the other two cases.\ One can easily compute the Picard numbers and write down the corresponding zeta functions if desired.\  We compute the geometric Picard numbers by a different method.\ Note that $-11$ is   a square modulo $3$ or $5$, but not modulo $7$ or $13$.

\begin{prop}
Let    $X\subset \P^4_\Z$ be the Klein cubic threefold with equation \eqref{equ3}  and let $F(X)$ be its surface of lines.\ 
Suppose $p\ne 2$.\ If $-11$ is   a square modulo $p$, the reduction modulo $p$ of $F(X)$ has geometric
Picard number   $25 $, otherwise it has geometric
Picard number $45$.\end{prop}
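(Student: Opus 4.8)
The strategy is to imitate the treatment of the Fermat threefold in Corollary \ref{coroweil}, replacing $\Q(\sqrt{-3})$ by $\Q(\sqrt{-11})$. The input is the isogeny used throughout this section: over $\overline\Q$, the Albanese variety $A(F(X))$ of the Klein cubic is isogenous to $E^5$ for a single elliptic curve $E$. This $E$ necessarily has complex multiplication: the order‑$11$ automorphism of $X$ induces on $A(F(X))$ an endomorphism whose characteristic polynomial on $H^1(\overline{A(F(X))},\Q_\ell)$ is $\Phi_{11}$ (it has no nonzero fixed vector, by the Torelli theorem of Clemens--Griffiths), so $\Q(\zeta_{11})$ embeds into $\End^0(A(F(X))_{\overline\Q})=M_5(\End^0(E_{\overline\Q}))$; a field of degree $10$ cannot embed into $M_5(\Q)$, so $\End^0(E_{\overline\Q})\neq\Q$, and since $\Q(\sqrt{-11})$ is the unique imaginary quadratic subfield of $\Q(\zeta_{11})$, the curve $E$ has CM by an order of $\Q(\sqrt{-11})$. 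As $h(-11)=1$ one may take $E$ defined over $\Q$, with good reduction at every prime $p\neq 11$.

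Next I would fix $p\notin\{2,11\}$, so that $X$, $A(F(X))$ and $E$ all have good reduction at $p$, and reduce the isogeny modulo a place above $p$ to get $A(F(X))_{\overline{\F_p}}\sim E_p^5$, where $E_p$ is the reduction of $E$. Since $A(F(X))$ is the Albanese variety of $F(X)$, one has $H^1(\overline{F(X)})\cong H^1(\overline{A(F(X))})$ compatibly with Frobenius, so by Theorem \ref{main}, applied over every extension $\F_{p^r}$, $$Q_1(F(X)_{\F_{p^r}},T)=Q_1(A(F(X))_{\F_{p^r}},T)=Q_1(E_{\F_{p^r}},T)^5\qquad(r\ge1).$$ By Deuring's theorem on reduction of CM elliptic curves, $E_p$ is supersingular precisely when $p$ does not split in $\Q(\sqrt{-11})$, i.e. (for $p\neq2,11$) precisely when $-11$ is not a square modulo $p$, and it is ordinary otherwise.

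Now I would split into the two cases using Corollary \ref{pic}. If $-11$ is a square modulo $p$, then $E_{\F_{p^r}}$ is ordinary for every $r$ (ordinariness is geometric), so its Frobenius $\pi_r$ lies in $\Q(\sqrt{-11})$ but not in $\Q$ (it satisfies $\pi_r\bar\pi_r=p^r$ and cannot be real, else $E_{\F_{p^r}}$ would be supersingular). Hence for every $r$ the polynomial $Q_1(F(X)_{\F_{p^r}},T)$ is the fifth power of an irreducible quadratic with a pair of non‑real conjugate roots, so in the notation of Corollary \ref{pic} one has $m_+=m_-=0$, $c=1$, $m_1=5$, and $\rho(F(X)_{\F_{p^r}})=5^2=25$; letting $r$ grow gives geometric Picard number $25$. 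If instead $-11$ is not a square modulo $p$, then $E_p$ is supersingular, so by the characterization of supersingularity recalled in Section \ref{sso} there is an $r\ge1$ with $Q_1(E_{\F_{p^r}},T)=(T\pm p^{r/2})^2$, whence $Q_1(F(X)_{\F_{p^r}},T)=(T\pm p^{r/2})^{10}$; by the last assertion of Corollary \ref{pic} this forces $\rho(F(X)_{\F_{p^r}})=45$, and since $45=b_2(F(X))$ (Theorem \ref{main}) is the maximal possible value, the geometric Picard number is $45$.

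The one delicate point is the first paragraph: one must have at hand — or reprove, e.g. through the $\Q(\zeta_{11})$‑action on the primitive cohomology — the isogeny $A(F(X))\sim E^5$ and check that it is compatible with reduction at all $p\neq2,11$; once that is granted, the rest is Deuring's criterion together with the root‑multiplicity bookkeeping already packaged in Corollary \ref{pic}. The case $p=2$ is genuinely outside the statement: the naive condition ``$-11$ is a square modulo $2$'' is vacuously true, yet $2$ is inert in $\Q(\sqrt{-11})$ (as $-11\equiv5\pmod8$), so $E_2$ is supersingular and the geometric Picard number is in fact $45$, in agreement with the equality $P_1(F(X)_{\F_2},T)=1+2^5T^{10}$ computed above.
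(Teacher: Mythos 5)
Your strategy coincides with the paper's: pass to the Albanese variety $A(F(X))$, identify it up to isogeny with $E^5$ for an elliptic curve $E$ with complex multiplication by $\Q(\sqrt{-11})$ admitting a model over $\Q$, reduce modulo $p$, apply Deuring's criterion, and convert ordinary/supersingular into geometric Picard number $25$/$45$. Your bookkeeping in the two cases (via Theorem \ref{main}, Remark \ref{tatec} and Corollary \ref{pic}, including the observation that ordinarity persists over every extension so the Picard number stays $25$ geometrically) is correct and in fact more explicit than the paper's, and your closing remark about $p=2$ agrees with the $\F_2$ computation of Section \ref{kleins}.

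The genuine gap is the input you describe as ``the isogeny used throughout this section''. No isogeny $A(F(X))_{\overline\Q}\sim E^5$ is established elsewhere in the paper: for the Klein cubic the only such statement appears over $\F_{2^{10}}$, deduced from the zeta function, and it says nothing about characteristic $0$ or about other primes. That isogeny is precisely what the paper's proof supplies by quoting Adler's theorem \cite{Adler}, namely $A(F(X))_\C\isom (E'_\C)^5$ with $E'_\C=\C/\Z[(-1+\sqrt{-11})/2]$, together with a model of $E'$ over $\Q$ from \cite{Silverman}. Your proposed substitute --- the $\Q(\zeta_{11})$-action coming from the order-$11$ automorphism --- is not sufficient as stated: it only shows that $A(F(X))$ is an abelian fivefold with CM by $\Q(\zeta_{11})$, and such a variety is \emph{simple} over $\overline\Q$ whenever its CM type is primitive, in which case there is no elliptic factor at all and the reduction pattern is governed by the splitting of $p$ in $\Q(\zeta_{11})$, not merely in $\Q(\sqrt{-11})$. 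What must be verified (and what Adler's result encodes) is that the CM type of this particular intermediate Jacobian is induced from $\Q(\sqrt{-11})$, e.g.\ by checking that the eigenvalues of the automorphism on $H^{2,1}(X_\C)$ are the $\zeta^t$ with $t$ running over the quadratic residues modulo $11$. Your argument pinning down the CM field of $E$ (a degree-$10$ field cannot embed in $M_5(\Q)$, and $\Q(\sqrt{-11})$ is the unique imaginary quadratic subfield of $\Q(\zeta_{11})$) is fine, but it presupposes the splitting $A(F(X))\sim E^5$ that it was meant to help justify. Once that input is granted, the remainder of your proof is sound; a minor caveat is that the reduced isogeny is a priori defined only over some finite extension $\F_{p^s}$, so the identity $Q_1(F(X)_{\F_{p^r}},T)=Q_1(E_{\F_{p^r}},T)^5$ should be used for $s\mid r$, which costs nothing since the geometric Picard number can be computed along any cofinal family of extensions.
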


\begin{proof}
Set   $\nu:=\frac{-1+\sqrt{-11}}{2}$ and $E'_\C:=\C/\Z[\nu]$.
By \cite[Corollary 4, p.~138]{Adler},  $A(F(X))_\C$ is isomorphic to $(E'_{\C})^{5}$.\  
 By \cite[Appendix A3]{Silverman},
the elliptic curve $E'_\C$ has a   model  defined by the equations
\[
 y^{2}+y=x^{3}-x^{2}-7x+10 =0 
\]
over $\Q$, which we   denote by $E'$.\ 
Since $A(F(X))_\C$ and  $E_\C^{\prime 5}$ are isomorphic,  
$A(F(X)) $ and  $E^{\prime 5}$ are isomorphic over some number field (\cite[Appendix I,  p.~240]{MumfordAV}).

We use  Deuring's criterion \cite[Chapter 13, Theorem 12)]{Lang}: for odd $p\ne 11$, the reduction of~$E'$ modulo $p$ is supersingular if and only if $p$ is inert or ramified in $\Z[\nu]$.\ By classical results in number theory, an odd prime $p\ne 11$ is inert or ramified in $\Z[\nu]$ if and only if $-11$ is not a square modulo $p$.
The geometric Picard number of the reduction modulo $p$ of $A(F(X))$ is therefore $45$ if~$-11$ is not a square modulo $p$, and $25$ otherwise.
\end{proof}

\subsubsection{An implementation of our algorithm}\label{algos}
We use the notation of Section \ref{secbsd}.
Let $X\subset \P^{4}_\Z$ be the cubic threefold defined by the equation
\[
f+2q_{1}x_{4}+2q_{2}x_{5}+x_{1}x_{4}^{2}+2x_{2}x_{4}x_{5}+x_{3}x_{5}^{2}=0,
\]
 where
\begin{eqnarray*}
f&=&x_{2}^{2}x_{3}-(x_{1}^{3}+4x_{1}x_{3}^{2}+2x_{3}^{3}),\\
q_{1}&=&x_{1}^{2}+2x_{2}^{2}+x_{2}x_{3}+x_{3}^{2},\\
q_{2}&=&x_{1}x_{2}+4x_{2}x_{3}+x_{3}^{2}.
\end{eqnarray*}
It contains the line $L$   defined by the equations $ x_{1}=x_{2}=x_{3}=0 $.

In characteristics $\le 31$,
the cubic $X$ is smooth except in characteristics $2$ or $3$ and the plane quintic curve
$ \Gamma_{L}$ is smooth except in characteristics $2$ or $5$.

We   implemented in {\tt Sage} the  algorithm described in Algorithm \ref{algo} (see \cite{magma}).\
Over $\F_{5}$, we get
\[
P_{1}(F(X)_{\F_5},T)=(1+5T^{2})(1+2T^{2}+8T^{3}-6T^{4}+40T^{5}+50T^{6}+625T^{8}).
\]
It follows that $A(F(X)_{\F_5})$ is not ordinary and not simple (it contains an elliptic curve).
  We also compute    (see Remark \ref{atcsrem})
\begin{equation}\label{atc4}
D_{5}(F(X)_{\F_5})=\lim_{s\to1}\frac{P_{2}(F(X)_{\F_5},5^{-s})}{(1-5^{1-s})^{5}}=\frac{2^{18}\cdot3^{5}\cdot157}{5^{10}}.
\end{equation} 

 \medskip

 Over the field $\F_{7}$, we compute that  $P_1(F(X)_{\F_7},T) $ is equal to
$$
1+4T+15\,T^{2}+46\,T^{3}+159\,T^{4}+460\,T^{5}+1\,113\,T^{6}
+2\,254\, T^{7}+5\,145\,T^{8}+9\,604\,T^{9}+16\,807\,T^{10}.
$$
This polynomial is irreducible   over $\Q$; it follows that $A(F(X)_{\F_7})$ is   ordinary and   simple (Section~\ref{sso}).\   We can even get more by using a nice criterion from \cite{HZ}.

 \begin{prop}
 The abelian variety $A(F(X)_{\F_7})$ is   absolutely simple,   \ie, it remains simple over any field extension. 
 \end{prop}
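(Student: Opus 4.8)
The plan is to show that the characteristic polynomial $Q_1(A(F(X)_{\F_7}),T) = T^{10}Q_1(F(X)_{\F_7},1/T)$ (equivalently, the reciprocal of the degree-$10$ polynomial $P_1(F(X)_{\F_7},T)$ displayed above) defines a $5$-dimensional abelian variety which stays simple over every finite extension, by verifying a Newton-polygon/number-theoretic criterion for absolute simplicity. Since $A(F(X)_{\F_7})$ has dimension $5$ and the polynomial $Q_1$ is irreducible over $\Q$ (already noted in the excerpt), $A(F(X)_{\F_7})$ is simple over $\F_7$; moreover, since its $T^5$-coefficient is prime to $7$, it is ordinary (Section~\ref{sso}). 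For an ordinary simple abelian variety, $K:=\Q[T]/(Q_1)$ is a CM field of degree $2\cdot 5 = 10$ and $\End^0(A(F(X)_{\F_7})_{\overline{\F_q}})$ is governed by the splitting behavior of the Frobenius $\pi$; the variety fails to be absolutely simple precisely when some power $\pi^m$ generates a proper CM subfield of $K$, equivalently when $K = \Q(\pi)$ contains a proper CM subfield over which $\pi^m$ lies for some $m$. So the strategy is: (1) confirm $\deg Q_1 = 10$ and irreducibility, giving $[K:\Q]=10$; (2) invoke the criterion of Howe--Zhu (this is the ``nice criterion from \cite{HZ}'' alluded to just before the statement): an ordinary simple abelian variety over $\F_q$ with Frobenius $\pi$ is absolutely simple if and only if $\Q(\pi^m) = \Q(\pi)$ for all $m\ge 1$, and this can be checked by a finite computation on the roots of $Q_1$; (3) carry out that finite check for the specific degree-$10$ polynomial above.

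The key steps, in order. First I would set $K = \Q(\pi)$ where $\pi$ is a root of $Q_1$; since $[K:\Q]=10$ and $K$ is CM (complex roots of modulus $\sqrt 7$, closed under $x\mapsto q/x = \bar x$), the proper CM subfields of $K$ have degree $2$ (the imaginary quadratic case) — there is no degree-$6$ CM subfield of a degree-$10$ CM field, and a degree-$10$ CM field can only contain a CM subfield of degree $2$ (since the index must be odd for the quotient to remain totally real... more precisely, if $L\subset K$ is a proper CM subfield then $[K:L]$ is such that $K/L$ has a real place structure forcing $[L:\Q]\mid 10$ and even, so $[L:\Q]\in\{2\}$). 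Second, by Howe--Zhu, $A$ is \emph{not} absolutely simple iff some power $\pi^m$ lies in a proper (hence imaginary quadratic) subfield of $K$; this would force all the $\pi_j^m$ (over the $j$-th Frobenius eigenvalue) to take at most two distinct values, with the ratios $\pi_i/\pi_j$ being roots of unity for many pairs $(i,j)$. Concretely, I would check: (a) that $Q_1(T)$ is not of the form $R(T^d)$ or $T^{10}R(c/T + \ldots)$ for any $d>1$ — no ``cyclotomic'' structure; (b) that the ratios $\pi_i/\pi_j$ of distinct roots are not roots of unity, which can be tested by computing $Q_1(T)$'s resultants or by checking that $\prod_{i\ne j}(\pi_i^m - \pi_j^m)\ne 0$ for the relevant small $m$ (only finitely many $m$ up to an explicit bound depending on the degree, by Howe--Zhu's effective version). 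Third, I would note the numerical input is already at hand — the explicit $P_1(F(X)_{\F_7},T) = 1+4T+15T^2+46T^3+\cdots+16807T^{10}$ — so the verification reduces to a finite machine computation with {\tt Magma} or {\tt Sage}, exactly the tools used throughout the paper.

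The main obstacle is step (2)--(3): translating ``absolutely simple'' into a finite, checkable condition and then certifying it. The cleanest route is to quote the Howe--Zhu criterion directly (they give an explicit list of the finitely many $m$ one must test, bounded in terms of $\dim A$, so for $\dim A = 5$ only a short list of exponents $m$ arises), and then to observe that the computer check over those $m$ comes back negative — i.e., no $\pi^m$ drops into a smaller field. One must be slightly careful that the ordinariness (already established via the $7\nmid 16807/7^{?}$... rather: the middle coefficient $460$ being prime to $7$) is what licenses the use of the CM-field description of the endomorphism algebra; without ordinariness one would need the full Honda--Tate classification. Given ordinariness and irreducibility of $Q_1$, the argument is essentially: $K=\Q(\pi)$ is a degree-$10$ CM field, its only candidate proper CM subfield is imaginary quadratic, the Howe--Zhu test for whether $\pi$ power-generates such a subfield is a finite computation, and that computation returns ``no,'' whence $\End^0$ does not change under base extension and $A(F(X)_{\F_7})$ is absolutely simple. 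I would present exactly this chain, deferring the finitely many explicit resultant/root-of-unity checks to the computer as is done elsewhere in the paper.
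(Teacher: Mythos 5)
Your proposal is correct and follows essentially the same route as the paper: both rest on the Howe--Zhu criterion from \cite{HZ} (after noting that $Q_1$ is irreducible, so simplicity holds and the characteristic polynomial is the minimal polynomial) and reduce absolute simplicity to a finite computer check on powers of the Frobenius. The paper simply makes the finite list explicit --- the relevant root of unity must lie in $\Q(F)$, so $\phi(d)\mid 10$, i.e.\ $d\in\{2,3,4,6,11,22\}$ --- and verifies that the characteristic polynomial of $F^d$ stays irreducible of degree $10$ for those $d$.
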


  \begin{proof}
 We want to apply the criterion \cite[Proposition 3 (1)]{HZ} to the \av\ $A:=A(F(X)_{\F_7})$.\ Let $d>1$.\ Since the \cha\ polynomial $ Q_1(A,T)$ (which is also the minimal polynomial) of the Frobenius morphism $F$ is not in $\Z[T^d]$, it is enough to check that, for any $d>1$,  there are no $d$th roots of unity $\zeta$ such that $\Q(F^d)\subsetneq \Q(F)$ and $\Q(F^d,\zeta)=\Q(F)$.\ If this is the case, $\Q(\zeta)$ is contained in  $\Q(F)$, hence $\phi(d)$ (where 
  $\phi$ is the Euler totient function) divides $\deg\bigl(Q_1(A,T)\bigr)=10$.\ This implies $d\in\{2,3,4,6,11,22\}$.\ But for these values of $d$, one computes that the \cha\ polynomial $Q_1(A_{\F_{7^d}},T)$ of $F^d$ is irreducible (of degree 10), and this contradicts  $\Q(F^d)\subsetneq \Q(F)$.\ Thus $A$ is absolutely simple. 
  \end{proof}
 
 We also compute (see Remark \ref{atcsrem})
\begin{equation}\label{atc5}
D_7(F(X)_{\F_7})=\lim_{s\to1}\frac{P_{2}(F(X)_{\F_7},7^{-s})}{(1-7^{1-s})^{5}}=\frac{2^{4}\cdot83^{2}\cdot557\cdot5\,737}{7^{10}}.
\end{equation} 

Here are some more computations in ``high'' \cha s:
\begin{eqnarray*}
\scriptstyle{P_{1}(F(X)_{\F_{23}},T)} &\scriptstyle{=}&\scriptstyle{ 1 +21\,T^2-35\,T^3+759\,T^4-890\,T^5+17\,457\,T^6-18\,515\,T^7+255\,507\,T^8+6\,436\,343\,T^{10},}\\
\scriptstyle{P_{1}(F(X)_{\F_{29}},T)}&\scriptstyle{=}&\scriptstyle{1+3\,T+5\,T^2+15\,T^3+352\,T^4+2\,828\,T^5+10\,208\,T^6+12\,615\,T^7+121\,945\,T^8+2\,121\,843\,T^9+20\,511\,149\,T^{10},}\\
\scriptstyle{P_{1}(F(X)_{\F_{31}},T) }&\scriptstyle{=}&\scriptstyle{1+2\,T+2\,T^2+72\,T^3+117\,T^4-812\,T^5+3\,627\,T^6+69\,192\,T^7+59\,582\,T^8+1\,847\,042\,T^9+28\,629\,151\,T^{10},}
\end{eqnarray*}
 and the corresponding limits
\begin{equation}\label{atc44}
\begin{array}{rcl}
\scriptstyle{D_{23}(F(X)_{\F_{23}})} &\scriptstyle{=}&\scriptstyle{\frac{3^2\cdot 5^2\cdot 13\cdot 157\cdot 19\,861 \cdot10\,004\,497}{23^{10}},}\\
\scriptstyle{D_{29}(F(X)_{\F_{29}})}&\scriptstyle{=}&\scriptstyle{\frac{2^2\cdot 17\cdot 173\,194\,374\,702\,432\,997}{29^{10}},}\\
\scriptstyle{D_{31}(F(X)_{\F_{31}})}&\scriptstyle{=}&\scriptstyle{\frac{2^9\cdot 47^2\cdot 683\cdot 8\,087 \cdot5\,312\,689}{31^{10}}.}
\end{array}
\end{equation}

\subsubsection{Smooth cubic threefolds over $\F_2$, $\F_3$, $\F_4$, or $\F_5$ with no lines}\label{noli}

Using a computer, it is easy to find many smooth cubic threefolds defined over $\F_2$ with no $\F_2$-lines (see Example \ref{cex}).\ For example, the cubic threefold   $X\subset\P^4_{\F_2}$ defined by the equation
$$
x_1^3 + x_2^3 + x_3^3 + x_1^2 x_2+ x_2^2 x_3
+ x_3^2 x_1 + x_1 x_2x_3+x_1x_4^2 + x_1^2 x_4+ x_2 x_5^2 + x_2^2 x_5+x_4^2x_5 =0
$$
contains no $\F_2$-lines.\ We also have\footnote{Among smooth cubics in $\P^4_{\F_2}$ with no $\F_2$-lines, the computer found examples whose number of   $\F_2$-points is any odd number between $3$ and $13$.} 
$$N_1(X)=9, N_2(X)=81,  N_3(X)=657, N_4(X)=4\,225, N_5(X)=34\,049,
$$
hence (see \eqref{mr} for the definition of $M_r(X)$)
$$M_1(X)=-3, M_2(X)=-1, M_3(X)=9, M_4(X)=-9, M_5(X)=7.
$$
The polynomial $P_1(F(X),T)=P_3(X,T/2)=\prod_{j=1}^{10}(1-\omega_jT)$ is then given by
$$ \exp \Bigl(\sum_{r=1}^5M_r(X)\frac{T^r}{r}\Bigr)+O(T^6)=1-3T+4T^2-10T^4+20T^5+O(T^6).$$
Using the reciprocity property \eqref{rec}, we obtain
$$P_1(F(X),T)=1-3T+4T^2-10T^4+20T^5-10\cdot 2 T^6+4\cdot 2^3  T^8-3\cdot 2^4  T^9+2^5 T^{10}.$$
  Since this polynomial has no multiple roots, the Picard number of $F(X)$ is 5 (Corollary \ref{pic}).

We found by random computer search the smooth cubic threefold   $X'\subset \P^4_{\F_3}$ defined by the equation
$$2x_1^3 + 2x_2^3 + x_1x_3^2 + x_2^2x_4+ 2x_3^2x_4 +x_1^2x_5 + x_2x_3x_5 + 2x_1x_4x_5 + 2x_2x_4x_5 +2x_4^2x_5 + 2x_4x_5^2 + x_5^3=0.$$
It contains no $\F_3$-lines and  25 $\F_3$-points.\ Computing directly the number of points on extensions of $\F_3$, as we did above for $\F_2$, takes too much time, and it is quicker to use  the Bombieri--Swinnerton-Dyer method (Proposition \ref{proposition number of points Bombieri}) on $X'_{\F_9}$, which contains an  $\F_9$-line.\ The result is that $P_1(F(X')_{\F_9},T) $ is equal to
$$1-5T+8T^2+10T^3-124T^4+515T^5-1\,116T^6+810T^7+5\,832T^8-32\,805T^9+59\,049T^{10}.
$$
Using the fact that  $X'$ has $25$ $\F_3$-points and that the roots of $P_1(F(X')_{\F_3},T)$ are square roots of the roots of $P_1(F(X')_{\F_9},T)$, one finds 
$$P_1(F(X')_{\F_3},T) =1-5T+10T^2-2T^3-36T^4+95T^5-108T^6-18T^7+270T^8-405T^9+243T^{10},$$
and the numbers of $\F_{3^r}$-lines in $X'_{\F_{3^r}}$, for $r\in\{1,\dots,5\}$, are  $0$, $40$, $1\,455$, $5\,740$,  $72\,800$, respectively.

Similarly, the smooth cubic threefold in $\P^4_{\F_4}$ defined by the equation
$$
x_1^3 + x_1^2x_2 + x_2^3 + x_1^2x_3 + ux_1x_3^2 + ux_2x_3^2 + 
    u^2x_1x_2x_4 + x_2^2x_4 + ux_4^3 + x_2^2x_5 + ux_2x_3x_5 + 
    x_3^2x_5 + x_3x_5^2 + x_5^3=0,
$$
where $u^2+u+1 = 0$, contains no $\F_4$-lines and  61 $\F_4$-points.

Finally, the smooth cubic threefold in $\P^4_{\F_5}$ defined by the equation
\begin{multline*}
x_1^3 + 2x_2^3 + x_2^2x_3 + 3x_1x_3^2 + x_1^2x_4 + x_1x_2x_4 + x_1x_3x_4+ 
    3x_2x_3x_4 + 4x_3^2x_4 + x_2x_4^2  \\
{}+ 4x_3x_4^2 + 3x_2^2x_5 + x_1x_3x_5 + 
    3x_2x_3x_5 + 3x_1x_4x_5 + 3x_4^2x_5 + x_2x_5^2 + 3x_5^3=0
\end{multline*}
contains no $\F_5$-lines and  126 $\F_5$-points.  

We were unable to find   smooth cubic threefolds defined over $\F_q$ with no $\F_q$-lines for the remaining values $q\in\{7,8,9\}$ (by Theorem \ref{prop6},   there are always $\F_q$-lines for $q\ge 11$).

\subsubsection{Nodal cubic threefolds over $\F_2$ or $\F_3$  with no lines}
\label{nodal}

Regarding cubic threefolds with one node and no lines, we found the following examples.

The unique singular point of the 
cubic in $\P^4_{\F_2}$ defined by the equation
\[
 x_2^3 + x_2^2x_3 + x_3^3 + x_1x_2x_4 + x_3^2x_4 
 + x_4^3 + x_1^2x_5 + x_1x_3x_5 + x_2x_4x_5=0
\]
is an ordinary double point at $ x:=  (0,1,0,0,1)$ and this cubic contains no $\F_2$-lines.\ As we saw during the proof of Corollary \ref{coronodal}, the base of the cone $\T_{X,x}\cap X$ is a smooth genus-4 curve defined over $\F_2$ with no $\F_4$-points.\ The   pencils $g^1_3$ and $h^1_3$  are defined over $\F_2$.

The unique singular point of the 
cubic in $\P^4_{\F_3}$ defined by the equation
\begin{multline*}
\qquad 2x_1^3 + 2x_1^2x_2 + x_1x_2^2 + 2x_2x_3^2 + 2x_1x_2x_4 + x_2x_3x_4  \\
{}+x_1x_4^2 + 2x_4^3 + x_2x_3x_5 + 2x_3^2x_5 + x_2x_5^2 + x_5^3=0\qquad
\end{multline*}
is an ordinary double point at $x:=(1,0,0,0,1)$ and this cubic contains no $\F_3$-lines.\ Again, the base of the cone $\T_{X,x}\cap X$ is a smooth genus-4 curve defined over $\F_3$ with no $\F_9$-points, and the   pencils $g^1_3$ and $h^1_3$  are defined over $\F_3$.

\subsection{Average number of lines} \label{remaverage}
Consider the   Grassmannian $G:= \Gr(1, \P^{n+1}_{\F_q})$, the parameter space  $\P=\P\bigl(H^0\bigl(\P^{n+1}_{\F_q},\cO_{\P^{n+1}_{\F_q}}(d)\bigr)\bigr)$ for all degree-$d$ hypersurfaces in $\P^{n+1}_{\F_q} $, and the incidence variety $I=\{(L,X)\in G\times\P\mid L\subset X\}$.\ The first projection $I\to G$ is a projective bundle, hence it is easy to compute the number of $\F_q$-points of $I$.\ The fibers of the second projection $I\to \P$ are the varieties of lines.\ The average number of lines (on {\em all} degree-$d$ $n$-folds) is therefore
\begin{equation}\label{eq}
\frac{\Card\bigl(G(\F_q)\bigr) (q^{\dim(\P)-d}-1)}{q^{\dim(\P)+1}-1}\sim  \Card\bigl(G(\F_q)\bigr)  q^{\dim(\P)-d-1}
.\end{equation} 
Recall that $\Card\bigl(G(\F_q)\bigr)=\sum_{0\le i<j\le n+1}q^{i+j-1}$.\ For cubic 3-folds, the right side of \eqref{eq} is
$$q^2+q+2+2q^{-1}+2q^{-2}+ q^{-3}+ q^{-4}.$$
 For $q=2$, the average number of lines on a cubic threefold is therefore $\sim 9.688$ (compare with Example \ref{cex} below).

 \begin{exam}[Computer experiments]\upshape\label{cex}
For a random sample of  $5\cdot 10^4$
     cubic threefolds defined over $\F_2$, we computed for each the number of $\F_2$-lines.
    \begin{center}
\includegraphics[scale=.4]{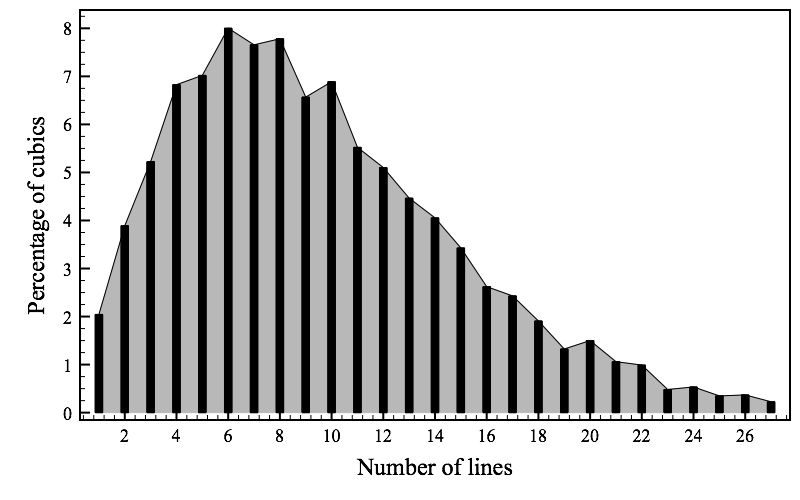}
\end{center}
The average number of lines in this sample is $\sim 9.651$.

Smooth cubic threefolds contain less lines: here is the distribution of the numbers of $\F_2$-lines for a random sample
 of $5\cdot 10^4$
  {\em smooth} cubic threfolds defined over $\F_2$. 
 \begin{center}
\includegraphics[scale=.4]{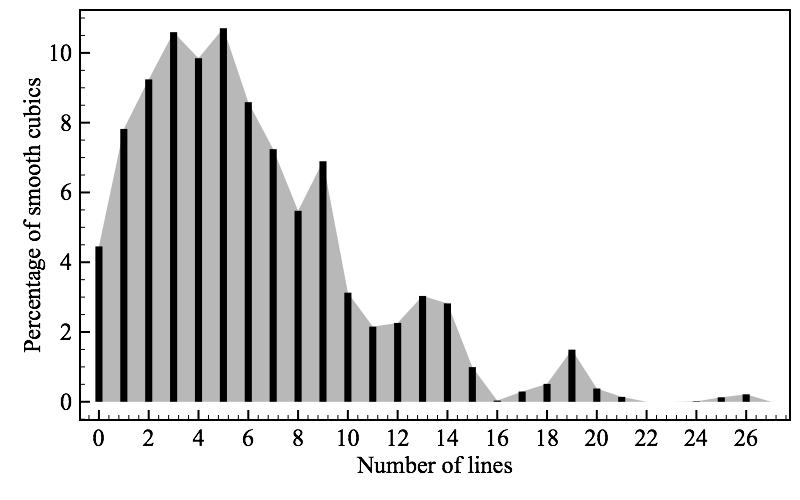}
\end{center}
The average number of lines  in this sample  is $\sim 6.963$.
\end{exam}

\section{Cubic fourfolds}

We now examine cubic fourfolds over $\F_q$.\ We expect them to contain ``more'' lines than cubic threefolds (indeed, all the examples we computed do contain $\F_q$-lines).\ Unfortunately, we cannot just take $\F_q$-hyperplane sections  and apply our results from Section \ref{secuth}, because these results
only concern  mildly singular cubic threefolds, and   there is no {\em a priori} reason why there would exist  a   hyperplane section defined over $\F_q$ with these suitable singularities.

We follow the same path as in Section \ref{secuth}.\ Recall that for any field $\kk$, the   scheme $F(X)$ of  lines contained in   a    cubic fourfold 
  $X\subset \P^5_\kk$ with finite singular set  is a geometrically connected   local complete intersection fourfold (Section \ref{segs}) with trivial canonical sheaf (\cite[Proposition (1.8)]{ak}).

\subsection{The zeta function of the fourfold of lines}

Let    $X\subset \P^5_{\F_q}$ be a {\em smooth} cubic hypersurface defined over $\F_q$.\ Its Betti numbers  are $1$, $0$, $1$, $0$, $23$, $0$, $1$, $0$, $1$, and the eigenvalues of the Frobenius morphism acting on $H^4(\overline X,\Q_\ell)$  
 are all divisible by $q$ as algebraic integers (\cite[Remark~5.1]{kat}).\ We  write
\begin{equation}\label{n1x}
N_r(X)= 1+q^r+q^{3r}+q^{4r}+q^r\sum_{j=1}^{23}\omega_j^r,
\end{equation}
where  the complex algebraic integers $\omega_j$ (and all their  conjugates) have modulus 
$q$, with $\omega_{23}=q$ (it corresponds to  the part of the cohomology that comes from $H^4( \P^5_{\overline{\F_q}},\Q_\ell)$).\ The trace formula~\eqref{zeta} reads
$$
Z(X,T)=\frac{1}{(1-T)(1-qT)(1-q^2T)(1-q^3T)(1-q^4T)P^0_4(X,T)},
$$
where 
\begin{equation}\label{defp0}
P^0_4(X,T) :=\frac{P_4(X,T)}{1-q^2T}=\prod_{j=1}^{22}(1-q\omega_jT)
.\end{equation}
 If we set 
\begin{equation}\label{mr4}
M_r(X):=
\frac1{q^r}\bigl(
N_r(X)-(1+q^r+q^{2r}+q^{3r}+q^{4r})\bigr) 
=\sum_{j=1}^{22}\omega_j^r,
\end{equation}
we obtain
\begin{equation}\label{mr44}
P^0_4(X,T)=\exp \Bigl (\sum_{r\ge 1}M_r(X)\frac{(qT)^r}{r}\Bigr)
.
\end{equation}

 \begin{theo}\label{main4}
Let    $X\subset \P^5_{\F_q}$ be a {\em smooth} cubic hypersurface defined over $\F_q$ and let $F(X)$ be the smooth fourfold of lines contained in $X$.\ With the notation above, we have $P_i(F(X),T)=0$ for $i$ odd and
\begin{eqnarray*}
P_2(F(X),T)&=& P_6(F(X),T/q^2)= P_4(X,T/q )=\prod_{1\le j\le 23}(1-\omega_jT)\\
P_4(F(X),T)&=&  \prod_{1\le j\le k\le 23}(1-\omega_j\omega_kT),
\end{eqnarray*}
where the complex numbers $\omega_1,\dots,\omega_{22}$ have  modulus $q$ and $\omega_{23}=q$, and 
\begin{equation}\label{zetafx2}
 Z(F(X),T) 
 = 
 \frac{1}
 {(1-T)(1-q^4T)\prod_{1\le j\le 23}\bigl((1-\omega_jT)(1-q^2\omega_jT)\bigr)\prod_{1\le j\le k\le 23}(1-\omega_j\omega_kT)}
.\end{equation}
\end{theo}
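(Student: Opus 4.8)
The plan is to mimic exactly the proof of Theorem~\ref{main}, substituting the Galkin--Shinder formula \eqref{gsf} into the definition of the zeta function. Since $X\subset\P^5_{\F_q}$ is smooth, $\Sing(X)=\varnothing$, so for $n=4$ the formula \eqref{gsf} reads
$$N_r(F(X))=\frac{N_r(X)^2-2(1+q^{4r})N_r(X)+N_{2r}(X)}{2q^{2r}}.$$
First I would plug in the expression $N_r(X)=1+q^r+q^{3r}+q^{4r}+q^r\sum_{j=1}^{23}\omega_j^r$ coming from the Weil conjectures and the Katz divisibility (\cite[Remark~5.1]{kat}), writing $S_r:=\sum_{j=1}^{23}\omega_j^r$ so that $N_r(X)=1+q^r+q^{3r}+q^{4r}+q^rS_r$. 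A direct expansion of $N_r(X)^2$, $-2(1+q^{4r})N_r(X)$ and $+N_{2r}(X)$, followed by division by $2q^{2r}$, should collapse — after the dust settles — to
$$N_r(F(X))=1+q^{4r}+\sum_{1\le j\le 23}(\omega_j^r+q^{2r}\omega_j^r)+\sum_{1\le j\le k\le 23}\omega_j^r\omega_k^r,$$
using the elementary identity $\tfrac12(S_r^2+S_{2r})=\sum_{j\le k}\omega_j^r\omega_k^r$ for the quadratic part. (One should double-check that the cross terms involving $1,q^r,q^{3r},q^{4r}$ against $q^rS_r$, together with the $-2(1+q^{4r})$ terms, cancel to leave precisely $q^{2r}S_r+S_r$; this is the one genuinely mechanical verification, and it parallels the surface case where the same bookkeeping produced the $\omega_j^r$ and $q\omega_j^r$ terms.)

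Once $N_r(F(X))$ is in this form, I would feed it into $Z(F(X),T)=\exp\bigl(\sum_{r\ge1}N_r(F(X))T^r/r\bigr)$ and use $\exp\bigl(\sum_r c^r T^r/r\bigr)=(1-cT)^{-1}$ term by term. The constant $1$ gives $(1-T)^{-1}$, the $q^{4r}$ gives $(1-q^4T)^{-1}$, the sums $\sum_j\omega_j^r$ and $\sum_j q^{2r}\omega_j^r$ give $\prod_j(1-\omega_jT)^{-1}$ and $\prod_j(1-q^2\omega_jT)^{-1}$, and the diagonal-inclusive double sum gives $\prod_{j\le k}(1-\omega_j\omega_kT)^{-1}$. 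This is exactly \eqref{zetafx2}.

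It then remains to read off the individual factors $P_i(F(X),T)$ from this product via \eqref{zeta}. Here one uses the known Betti numbers of $F(X)$: since $X$ is a smooth cubic fourfold, $F(X)$ is a smooth fourfold with $b_1=b_3=b_5=b_7=0$ (for instance because $h^1(F(X),\cO_{F(X)})=0$ by Proposition~\ref{ak}, so $P_i=0$ for $i$ odd), $b_2=b_6=23$, and $b_4=\binom{23}{2}+23=\binom{24}{2}=276$, matching the degrees of the claimed polynomials. The factor $(1-T)$ in the denominator of \eqref{zetafx2} is $P_0$, the factor $(1-q^4T)$ is $P_8$, the numerator is empty, and the remaining denominator factors must be distributed among $P_2$, $P_4$, $P_6$. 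Poincar\'e duality forces $P_6(F(X),T)=\prod_j(1-q^4\omega_j^{-1}T)$, which — since $\{q^2/\omega_j\}=\{\bar\omega_j\}$ for the $22$ non-trivial eigenvalues and $q^2/\omega_{23}=q$ — equals $\prod_j(1-q^2\omega_jT)$; hence $P_2(F(X),T)=\prod_{1\le j\le 23}(1-\omega_jT)=P_4(X,T/q)=q^{23}P_4(X,T)\big/$ (up to the normalization in \eqref{defp0}–\eqref{pi}), and $P_4(F(X),T)=\prod_{j\le k}(1-\omega_j\omega_kT)$ is what is left. The relations $P_2(F(X),T)=P_6(F(X),T/q^2)=P_4(X,T/q)$ and the modulus statement $|\omega_j|=q$ with $\omega_{23}=q$ are then immediate.

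The main obstacle is not conceptual but the risk of an arithmetic slip in the expansion of \eqref{gsf}: one must be careful that the ``trivial'' eigenvalue $\omega_{23}=q$ is kept inside the sums (so that, e.g., the term $q^{2r}\cdot\omega_{23}^r=q^{3r}$ and $\omega_{23}^r=q^r$ correctly reconstitute the $q^r+q^{3r}$ pieces of $N_r(F(X))$ that a naive count of Betti numbers would otherwise misplace), and that the diagonal term $j=k=23$ contributes $\omega_{23}^{2r}=q^{2r}$, i.e. the factor $(1-q^2T)$ hidden inside $\prod_{j\le k}(1-\omega_j\omega_kT)$. Alternatively — and this is the cleaner route, exactly as in Theorem~\ref{main} — one can avoid the explicit bookkeeping by invoking the geometric isomorphisms $H^4(\overline X,\Q_\ell)\isomto H^2(\overline{F(X)},\Q_\ell(-1))$ (via $\pr_{1*}\pr_2^*$ on the incidence variety, cf.\ \cite{chpi}) and $H^4(\overline{F(X)},\Q_\ell)\isomto\Sym^2 H^2(\overline{F(X)},\Q_\ell)$ together with $H^6\isomto H^2(1)$ by hard Lefschetz or Poincar\'e duality, which give the $P_i$ directly; then \eqref{zetafx2} follows from \eqref{zeta}. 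I would present the Galkin--Shinder computation as the primary proof and mention the cohomological route as an alternative.
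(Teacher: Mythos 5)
Your proposal is correct and follows essentially the same route as the paper, whose proof simply notes that the methods of Theorem~\ref{main} (the Galkin--Shinder substitution or the cohomological isomorphisms $H^4(\overline X,\Q_\ell)\isom H^2(\overline{F(X)},\Q_\ell(1))$ and $\Sym^2H^2\isom H^4$) carry over and leaves the details to the reader. Your expansion of \eqref{gsf} with $N_r(X)=1+q^r+q^{3r}+q^{4r}+q^rS_r$ does collapse to $1+q^{4r}+(1+q^{2r})S_r+\sum_{j\le k}(\omega_j\omega_k)^r$, and the identification of the individual $P_i$ via purity and the Betti numbers $23,\,276,\,23$ is exactly the intended bookkeeping.
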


\begin{proof} The various methods of proof described in the proof of Theorem \ref{main} are still valid here.\ For example, one may deduce the theorem from the  isomorphisms
\begin{equation*}
H^4(\overline X,\Q_\ell)\isomto H^2\bigl(\overline{F(X)},\Q_\ell(1)\bigr) \quad{\rm and}\quad  \Sym^2 H^2(\overline{F(X)},\Q_\ell) \isomto H^4(\overline{F(X)},\Q_\ell)
\end{equation*}
obtained from the Galkin--Shinder relation \eqref{gsfgrot} (\cite[Example 6.4]{gash}) or the analogous (known) statements in \cha\ $0$.\  We leave the details to the reader.\end{proof}

 \subsection{Existence of lines over large finite fields}

As we did for cubic threefolds, we use the Deligne--Weil estimates to 
 find a lower bound for the number of $\F_q$-lines on a smooth cubic fourfold defined over $\F_q$.

\begin{theo}\label{th52}
Let $X$ be a smooth cubic fourfold defined over $\F_q$ and let $N_1 ( F(X) )$ be the   number of $\F_q$-lines contained in $X$.\ For $q\ge 25$, we have
$$N_1 ( F(X) )\ge
q^4-23q^3+276q^2-23q+1  
$$
and, for smaller values of $q$,
\renewcommand{\arraystretch}{1.5}
$$\begin{tabular}{|c|cccccccccc|}
  \hline
\strut  q &\ $5$ & \ $7$&\ $8$&\ $9$&\ $11$&\ $13$&\ $16$&\ $17$&\ $19$&\ $23$ 
\\
  \hline
 $N_1 ( F(X) )\ge$ &\  $44$\  &\  $715$\ &\ $1\,448$\ &\ $2\,572$\ &\ $6\,443$\ 
 &\ $13\,365$\ &\ $32\,245$\ &\ $41\,543$\ &\ $65\,912$\ &\ $144\,548$
 \\ 
  \hline
\end{tabular}
$$\renewcommand{\arraystretch}{1}
In particular, $X$ always contains an $\F_q$-line when $q\ge5$.
\end{theo}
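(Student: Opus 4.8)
The plan is to mimic exactly the argument used for cubic threefolds in Theorem \ref{prop6}, now applied to the 22 relevant Frobenius eigenvalues on the primitive cohomology $H^4_{\mathrm{prim}}(\overline X,\Q_\ell)$. First I would use Theorem \ref{main4} together with the Grothendieck--Lefschetz formula \eqref{grolef2} to write $N_1(F(X))$ explicitly in terms of the $\omega_j$. Since $b_1=b_3=b_5=b_7=0$ for $F(X)$, only even cohomology contributes, and Theorem \ref{main4} gives $H^2$ with eigenvalues $\omega_1,\dots,\omega_{23}$ (with $\omega_{23}=q$), $H^4$ with eigenvalues $\omega_j\omega_k$ for $1\le j\le k\le 23$, $H^6$ with eigenvalues $q^2\omega_j$, and $H^0,H^8$ contributing $1$ and $q^4$. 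Collecting terms, and separating off the ``trivial'' eigenvalue $\omega_{23}=q$, one gets an expression of the shape
\[
N_1(F(X)) = (\text{polynomial in }q) + (\text{linear combination of }q^a\textstyle\sum_j\omega_j\text{ and }\sum_{j\le k}\omega_j\omega_k).
\]
Writing $s_1=\sum_{j=1}^{22}\omega_j$ and $s_2=\sum_{1\le j\le k\le 22}\omega_j\omega_k=\tfrac12(s_1^2+\sum_j\omega_j^2)$ and noting $\sum_j\omega_j^2 = M_2(X)$ is a real number, the point is that $N_1(F(X))$ becomes a function of $s_1$ and $\sum_j\omega_j^2$ alone, whose extrema over the Weil-allowed region I can bound.

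The key subtlety, compared with the threefold case, is that here the eigenvalues $\omega_j$ have modulus $q$ (not $\sqrt q$) and are \emph{not} in general closed under complex conjugation in an a priori controlled way on the primitive part — but since $H^4$ carries a (symmetric) Frobenius-equivariant pairing, the multiset $\{\omega_j\}_{j=1}^{22}$ together with $q$ satisfies $\prod$-type functional equations, and in particular it is stable under $\omega\mapsto q^2/\omega$ and under complex conjugation. So I would argue: the $22$ primitive eigenvalues split into pairs $\{\omega, \bar\omega\}$ with $|\omega|=q$; writing $t_j=\omega_j+\bar\omega_j\in[-2q,2q]$ for $j=1,\dots,11$, one has $s_1=\sum_{j=1}^{11}t_j$ and $\sum_{j=1}^{22}\omega_j^2=\sum_{j=1}^{11}(t_j^2-2q^2)$. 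Substituting, $N_1(F(X))$ becomes an explicit function $G_q(t_1,\dots,t_{11})$ on the box $[-2q,2q]^{11}$; crucially $s_2$ is quadratic but of the special form $\tfrac12(s_1^2+\sum\omega_j^2)$, so that when all but one $t_j$ are held fixed, $G_q$ is quadratic in the remaining variable with positive leading coefficient $\tfrac12$, hence convex, hence minimized at an interior critical point and maximized at $t_j=\pm 2q$. A short computation of the critical point (it is the same for every coordinate by symmetry) reduces the minimization to a one-parameter problem: set all $t_j$ equal to a common value $t$, minimize over $t\in[-2q,2q]$ the resulting explicit quadratic in $t$. This yields the stated lower bound $q^4-21q^3+210q^2-21q+1$ for $q\ge 23$, and for $q\in\{5,7,8,9,11,13,16,17,19\}$ one must instead compare the interior critical value with the boundary values (since for small $q$ the unconstrained minimizer may fall outside $[-2q,2q]$, or the nearest admissible integer vector of $t_j$'s differs), which accounts for the case-by-case table.

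The main obstacle I expect is bookkeeping rather than conceptual: getting the coefficient of each power of $q$ exactly right in $N_1(F(X))$, correctly isolating the contribution of $\omega_{23}=q$ from $H^2$, $H^6$ (where it produces terms $q$, $q^3$ respectively) and from $H^4$ (where $\omega_{23}^2=q^2$ and the cross terms $q\,\omega_j$ appear), and then verifying that the resulting function really does have the claimed convexity in each variable — i.e. that no linear cross-terms $t_it_j$ with $i\ne j$ survive with a sign that would spoil the ``extrema on the boundary or at the symmetric critical point'' argument. In fact $s_2$ does contain such cross terms ($\omega_i\omega_j$ with $i\ne j$ on the primitive part), so one should be slightly careful: the correct statement is that $G_q$ is, for each fixed $j$, a convex quadratic in $t_j$, which is all that is needed for the minimum to be attained at a critical point (in the interior) and the maximum at the boundary; the cross terms are harmless because the Hessian's diagonal is positive and the argument proceeds one variable at a time. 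Once the convexity-in-each-variable is checked and the symmetric critical point computed, the remaining inequalities for each listed $q$ are a finite, mechanical verification, and the conclusion ``$X$ always contains an $\F_q$-line when $q\ge 5$'' follows since each displayed bound is $\ge 1$ (indeed the $q=5$ bound is $26$). For $q\le 4$ the theorem makes no claim here, consistent with the introduction.
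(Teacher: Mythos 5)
Your skeleton is the same as the paper's: use Theorem \ref{main4} and the Lefschetz trace formula to express $N_1(F(X))$ through the $23$ Frobenius eigenvalues on $H^2(\overline{F(X)},\Q_\ell)$, pass to the real variables $t_j=\omega_j+\bar\omega_j\in[-2q,2q]$, and optimize over the box. The genuine difference lies in the function being optimized. In the paper's displayed computation the diagonal contributions $\omega_j^2+\bar\omega_j^2=r_j^2-2q^2$ coming from the $j=k$ terms of $\Sym^2H^2$ do not appear, so its expression is linear in each $r_j$ and the minimum is sought among the vertices $r_j=\pm2q$ only (together with the $a,b,\eps$ bookkeeping); that is exactly how the polynomial $q^4-21q^3+210q^2-21q+1$ and the table arise. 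Your version keeps those quadratic terms, which is indeed what the trace of Frobenius on $H^4(F(X))\simeq\Sym^2H^2(F(X))$ requires, so your optimization problem is genuinely different from the one solved in the paper.

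However, your execution has two concrete problems. First, the regime analysis is backwards. Separating off the eigenvalue $q$ and setting $\Sigma=\sum_{j=1}^{11}t_j$, your function is $(1+q+q^3+q^4-10q^2)+(1+q+q^2)\Sigma+\tfrac12\Sigma^2+\tfrac12\sum_j t_j^2$, jointly convex and symmetric, with unconstrained minimizer at the symmetric point $t_j=-(1+q+q^2)/12$; this lies inside $[-2q,2q]$ precisely for $q\le 22$ and outside for $q\ge 23$ (this is where the threshold $23$ comes from). So in the main case $q\ge23$ the minimum is attained at the vertex $t_1=\dots=t_{11}=-2q$, not at an interior critical point. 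Second, evaluating there gives $q^4-21q^3+232q^2-21q+1$, not the stated $q^4-21q^3+210q^2-21q+1$, and for the small values of $q$ your interior critical value is likewise strictly larger than the table entries (for $q=5$ one gets about $66$, not $26$). So your computation cannot ``yield the stated bounds'' as claimed; you must add the (easy but necessary) remark that your bounds are stronger, so the theorem's inequalities follow a fortiori. Finally, a small caveat you gloss over: the $22$ primitive eigenvalues need not split into genuine conjugate pairs, since $+q$ and $-q$ may both occur among them with odd multiplicity; treating a leftover pair $(q,-q)$ as a ``pair with $t=0$'' replaces its contribution $+2q^2$ to $\sum\omega_j^2$ by $-2q^2$, which only lowers the modeled count, so the lower bound survives -- but this should be said explicitly (the paper handles it with its $a,b,c$ decomposition and the observation that $q$ is always an eigenvalue).
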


When $q$ is a power of $2$, we will see in  Proposition~\ref{ked}  that $X$ always contains an $\F_q$-line.\ These leaves only the case $q=3$  open (see Section \ref{f34}).

\begin{proof}
Write the roots of  $Q_2(F(X),T)$  
as $q$ (with multiplicity $a$), $-q$ (with multiplicity $b$), $\omega_1,\dots ,\omega_c ,\overline\omega_1,\dots ,\overline\omega_c$, with $a+b+2c=23$.\ The $r_j:=\omega_j+\overline \omega_j$ are then real numbers in $[-2q,2q]$ and, by \eqref{grolef2} and Theorem \ref{main4}, we  have
\begin{eqnarray}
N_1\bigl( F(X)\bigr)&=&1 + q^4 +\sum_{1\le j\le k\le 23}\omega_j\omega_k  + (1 + q^2)\sum_{1\le j \le 23}\omega_j\label{n1fx}
\\
&=&1 + q^4 +\Bigl( \frac12\bigl( a(a+1)+b(b+1)\bigr) -ab\Bigr)q^2
+(a-b)q\sum_{1\le j\le c}r_j\nonumber\\
&&\qquad{}-cq^2+\sum_{1\le j\le k\le c}r_j  r_k+ (1 + q^2)\Bigl((a-b)q+\sum_{1\le j\le c}r_j\Bigr)\nonumber\\
&=&1 + q^4 +\frac12 \bigl( (a-b)^2+23-4c\bigr) q^2+(1 + q^2)(a-b)q\nonumber\\
&&\qquad{}+ \sum_{1\le j\le k\le c}r_j  r_k+ \bigl(1 + q^2+(a-b)q\bigr)\sum_{1\le j\le c}r_j.\nonumber
\end{eqnarray}
Since $a+b=23-2c$ is odd, it is enough to study the cases $a=1$ and $b=0$, or $a=0$ and $b=1$, since we can always consider pairs $q,q$, or $-q,-q$, as $\omega, \overline\omega$.\ We then have $c=11$ and  setting $\eps:=a-b\in\{-1,1\}$, we obtain 
\begin{equation*}
N_1\bigl( F(X)\bigr)=   q^4+\eps q^3 -10q^2+\eps q+1 +4q^2 f_u\Bigl(\frac{r_1}{2q} ,\dots,\frac{r_{11}}{2q} \Bigr),
\end{equation*}
where $u= \frac12(q+\eps +\frac{1}{q})$ and
$$\forall (t_1,\dots,t_c)\in [-1,1]^c\qquad f_u(t_1,\dots,t_c):=\sum_{1\le j\le k\le c}t_j  t_k+u \sum_{1\le j\le c}t_j
.$$
The strictly convex function $f_u$   achieves its minimum over $[-1,1]^c$ at a unique point and since $f_u$  is symmetric in the variables $(t_1,\dots,t_c)$, this point must be of the form $(t_0,\dots,t_0)$. Since $f_u(t,\dots,t)=\frac{c(c+1)}{2}t^2+cut$, we obtain (since $u>0$):
\begin{itemize}
\item either $u< c+1$ and $t_0=- \frac{u}{c+1}$, and the minimum is $-\frac{cu^2}{2(c+1)}$;
\item or $u\ge c+1$  and $t_0=- 1$,  and the minimum is $\frac{c(c+1)}{2}-cu$.
\end{itemize}
Assume first $\eps=-1$. When $q\ge 25$, we have
\begin{eqnarray*}
N_1\bigl( F(X)\bigr)&\ge &q^4- q^3 -10q^2- q+1 +4q^2\Bigl( \frac{11\cdot 12}{2}-\frac{11}{2}\Bigl(q-1 +\frac{1}{q}\Bigr)\Bigr)\\
&=&q^4- 23q^3 +276q^2- 23q+1.
\end{eqnarray*}
When $q\le 23$, we have instead
\begin{eqnarray*}
N_1\bigl( F(X)\bigr)&\ge &q^4- q^3 -10q^2- q+1 +4q^2\Bigl( -\frac{11(q-1 +\frac{1}{q})^2}{4\cdot 2\cdot 12}\Bigr)\\
&=&\frac{13}{24}q^4-  \frac{1}{12}q^3 - \frac{91}{8}q^2- \frac{1}{12}q+\frac{13}{24}.
\end{eqnarray*}
This lower bound gives the numbers in the table.
When $\eps=1$, the bounds we get are higher.
%
\end{proof}

\subsection{Existence of lines over some finite fields}

The cohomology of the structure sheaf of the fourfold $F(X)$ is particularly simple and this can be used to prove congruences for its number of  
$\F_q$-points by using the Katz formula \eqref{trka}.

\begin{prop}[Altman--Kleiman]\label{ak}
Let $X\subset \P^5_\kk$ be a cubic hypersurface defined over a field $\kk$, with finite singular set.\ We have
\begin{eqnarray*}
&h^0(F(X),\cO_{F(X)})=h^2(F(X),\cO_{F(X)})=h^4(F(X),\cO_{F(X)})=1&\\
&h^1(F(X),\cO_{F(X)})=h^3(F(X),\cO_{F(X)})=0.&
\end{eqnarray*}
\end{prop}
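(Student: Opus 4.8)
The plan is to compute the cohomology of $\cO_{F(X)}$ by realizing $F(X)$ inside the Grassmannian $G:=\Gr(1,\P^5_\kk)$ as the zero locus of a section of an explicit rank-$4$ bundle, and then running Koszul and Bott-type computations.\ Concretely, let $\cS$ be the tautological rank-$2$ subbundle on $G$, so that $F(X)$ is the zero scheme of the section of $\Sym^3\cS^\vee$ (a rank-$4$ bundle on the $8$-fold $G$) determined by the cubic form defining $X$.\ Since $\Sing(X)$ is finite, $F(X)$ has the expected codimension $4$ and the Koszul complex
$$0\to \bw{4}(\Sym^3\cS)\to \bw{3}(\Sym^3\cS)\to \bw{2}(\Sym^3\cS)\to \Sym^3\cS\to \cO_G\to \cO_{F(X)}\to 0$$
is exact (a regular-section resolution; the finiteness of the singular locus is exactly what guarantees this, cf.\ the Altman--Kleiman references already cited for the local-complete-intersection property).\ This reduces everything to computing $H^\bullet(G,\bw{k}(\Sym^3\cS))$ for $0\le k\le 4$ and then chasing the resulting spectral sequence (or hypercohomology of the complex) down to $H^\bullet(F(X),\cO_{F(X)})$.

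The next step is to identify the bundles $\bw{k}(\Sym^3\cS)$ as direct sums of Schur functors $\Sigma^\lambda\cS$ (equivalently, irreducible $\GL_2$-representations applied to $\cS$), using the plethysm decomposition of $\bw{k}(\Sym^3 V)$ for $V$ two-dimensional — these are small, classical decompositions.\ Then I would apply Bott's theorem on $G=\Gr(1,\P^5)$ (valid in characteristic $0$; in positive characteristic one uses instead the flat base change / semicontinuity argument below) to read off which $\Sigma^\lambda\cS$ are acyclic and which contribute, and in which cohomological degree.\ Assembling these contributions through the Koszul hypercohomology spectral sequence yields the stated Hodge–like numbers $h^0=h^2=h^4=1$, $h^1=h^3=0$.\ In fact a cleaner route, and the one I would actually write, is: it suffices to prove the statement when $\kk$ is algebraically closed of characteristic $0$ and $X$ is \emph{smooth}, because for the general $X$ with finite singular locus one spreads out over a suitable base, uses that $F(X)$ is a flat family of fourfolds with trivial canonical sheaf (Proposition already recalled, $\omega_{F(X)}\cong\cO_{F(X)}$ via Altman--Kleiman), and invokes semicontinuity together with the constancy of $\sum(-1)^i h^i(\cO_{F(X)})=\chi(\cO_{F(X)})$ in flat families to propagate the values; and for smooth $X$ over $\C$ the numbers $h^i(\cO_{F(X)})=h^{0,i}(F(X))$ can be extracted from the Hodge theory of the Fano variety of lines on a cubic fourfold, where it is classical (Beauville--Donagi) that $F(X)$ is an irreducible holomorphic symplectic fourfold, hence $h^{0,0}=h^{0,2}=h^{0,4}=1$ and $h^{0,1}=h^{0,3}=0$.\ One still owes the characteristic-$p$, singular, and non-closed-field reductions, but those are the routine base-change steps indicated above (smooth and proper base change for the étale side is not needed here since we are dealing with coherent cohomology and a flat proper family).

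The main obstacle I anticipate is the passage to positive characteristic: Bott vanishing on the Grassmannian can fail in small characteristics, and the Beauville--Donagi symplectic structure argument is a characteristic-$0$ input, so the honest work is in making the flatness/semicontinuity reduction airtight — one must exhibit $F(X)$ as a member of a flat proper family with a characteristic-$0$ fiber having the desired cohomology, control $\chi(\cO_{F(X)})$ (which is deformation-invariant and computable once, e.g.\ via the Koszul complex over $\Z$ after inverting nothing, since Euler characteristics of the terms $\bw{k}(\Sym^3\cS)$ are characteristic-free), and then force each individual $h^i$ by semicontinuity plus the known alternating sum plus the a priori bounds $h^0\ge 1$, $h^4=h^0$ (Serre duality with $\omega_{F(X)}\cong\cO_{F(X)}$) and $h^1=h^3$ (Serre duality again).\ With $\chi=h^0-h^1+h^2-h^3+h^4=2h^0-2h^1+h^2$ pinned to the value $3$ computed in characteristic $0$, and $h^0=1$, one gets $h^2-2h^1=1$; semicontinuity from the generic (characteristic-$0$, smooth) fiber then forces $h^1=0$, $h^2=1$, completing the argument uniformly.
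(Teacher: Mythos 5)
Your opening move --- realizing $F(X)$ as the zero scheme of a section of the rank-$4$ bundle $\Sym^3\cS^\vee$ on $G=\Gr(1,\P^5_\kk)$ and resolving $\cO_{F(X)}$ by the Koszul complex --- is exactly the paper's route, and the finiteness of $\Sing(X)$ does indeed guarantee the expected codimension, hence exactness. The genuine gap comes when you abandon the direct computation of $H^\bullet(G,\bw{k}{(\Sym^3\cS)})$ in favour of a characteristic-$0$ input (Bott, or Beauville--Donagi for smooth $X$ over $\C$) propagated by flatness and semicontinuity: semicontinuity gives inequalities in the \emph{wrong direction}. For a flat proper family over a discrete valuation ring whose generic fibre has characteristic $0$ and whose special fibre is your characteristic-$p$ Fano fourfold, upper semicontinuity yields $h^i(\mathrm{special})\ge h^i(\mathrm{generic})$, i.e.\ lower bounds on the numbers you need to bound above. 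Combined with constancy of $\chi(\cO)=3$ and Serre duality ($h^4=h^0$, $h^3=h^1$), the constraints $h^0=1$, $h^2-2h^1=1$, $h^1\ge 0$, $h^2\ge 1$ are also satisfied by, say, $h^1=1$, $h^2=3$, so nothing in your argument forces $h^1=0$ and $h^2=1$. The same directional problem recurs when you try to pass, within characteristic $0$, from smooth $X$ to $X$ with finite singular locus. A secondary unproved point: you take $h^0=1$ as an a priori bound, but for singular $X$ this requires geometric connectedness \emph{and} reducedness of $F(X)$, which you do not establish.

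The repair is simply to carry out your first plan characteristic-freely, which is what the paper does: by Altman--Kleiman \cite[Theorem (5.1)]{ak}, over any field the only non-zero cohomology groups of the Koszul terms $\bw{k}{(\Sym^3\cS)}$ are $H^4(G,\bw{2}{(\Sym^3\cS)})\isom H^8(G,\bw{4}{(\Sym^3\cS)})\isom\kk$, so no Bott vanishing, plethysm bookkeeping, lifting to characteristic $0$, or Hodge theory is needed; chasing the Koszul complex then gives $H^1=H^3=0$ and identifies $H^0(F(X),\cO_{F(X)})\isom H^0(G,\cO_G)$, $H^2(F(X),\cO_{F(X)})\isom H^4(G,\bw{2}{(\Sym^3\cS)})$, $H^4(F(X),\cO_{F(X)})\isom H^8(G,\bw{4}{(\Sym^3\cS)})$, which in particular settles $h^0=1$ without any reducedness discussion.
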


\begin{proof}
The scheme $F(X)$ is the zero scheme of a section    of the rank-4 vector bundle $\cE^\vee:= \Sym^3\!\cS^\vee$ on $G:=\Gr(1,\P^5_\kk)$ and
the  Koszul complex
 \begin{equation}\label{kr}
0\to\bw{4}{\cE}\to\bw{3}{\cE}\to \bw{2}{\cE} \to \cE 
\to
 \cO_G\to \cO_{F(X)}\to 0
\end{equation}
is exact.\ By \cite[Theorem (5.1)]{ak}, the only nonzero cohomology groups of $\bw{r}{\cE} $ are
$$H^8(G,\bw{4}{\cE})\isom H^4(G,\bw{2}{\cE})\isom \kk.
$$
Chasing through the cohomology sequences associated with \eqref{kr}, we obtain $H^1(F(X),\cO_{F(X)})=H^3(F(X),\cO_{F(X)})=0$ and
\begin{eqnarray*}
H^0(F(X),\cO_{F(X)})&\isom&H^0(G,\cO_G),\\
H^2(F(X),\cO_{F(X)})&\isom&H^4(G,\bw{2}{\cE}),\\
H^4(F(X),\cO_{F(X)})&\isom&H^8(G,\bw{4}{\cE}).
\end{eqnarray*}
This proves the proposition. \end{proof}

Since $\omega_{F(X)}$ is trivial, the multiplication product
\begin{equation}\label{cup}
H^2(F(X),\cO_{F(X)})\otimes H^2(F(X),\cO_{F(X)})\to H^4(F(X),\cO_{F(X)})
\end{equation}
is the Serre duality pairing.\ It is therefore an isomorphism.

 \begin{coro}\label{coro}
Let $X\subset \P^5_{\F_q}$ be a cubic hypersurface with finite singular set, defined over  $\F_q$.\ If   $q \equiv 2\pmod3$, the hypersurface $X$ contains  an $\F_q$-line.
\end{coro}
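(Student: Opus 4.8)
The plan is to combine the Katz trace formula (\ref{trka}) with the cohomological computation of Proposition \ref{ak}, exploiting the hypothesis $q\equiv 2\pmod 3$ to pin down the Frobenius action on $H^2(F(X),\cO_{F(X)})$. First I would apply \eqref{trka} with $r=1$ and $n=4$: since $h^1=h^3=0$ and $h^0=h^2=h^4=1$ by Proposition \ref{ak}, we get
\[
N_1(F(X))\cdot 1_{\F_q}\equiv 1+\Tr\bigl(\gF_q,H^2(F(X),\cO_{F(X)})\bigr)+\Tr\bigl(\gF_q,H^4(F(X),\cO_{F(X)})\bigr)\quad\text{in }\F_q.
\]
Because $H^0$, $H^2$, $H^4$ are all one-dimensional, $\gF_q$ acts on each by a scalar; call the scalar on $H^2$ by $\lambda\in\F_q$. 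On $H^0$ it acts by $1$ (it is the Frobenius on constants), and on $H^4$ — here is the key point — it acts by $\lambda^2$, because the multiplication pairing \eqref{cup} is a $\gF_q$-equivariant isomorphism $H^2\otimes H^2\to H^4$ (it is the Serre duality pairing, hence an isomorphism, and $\gF_q$ is a ring endomorphism of $H^\bullet(F(X),\cO_{F(X)})$ so it respects cup products). Therefore
\[
N_1(F(X))\equiv 1+\lambda+\lambda^2\pmod p,
\]
where $p=\ch(\F_q)$, and in fact the right-hand side lies in the prime field $\F_p$.

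Next I would argue that $\lambda$ itself lies in $\F_p$, not merely in $\F_q$: the right side of \eqref{trka} is always in the prime subfield, and running the argument over $\F_{q}$ and over $\F_{q^2}$ (i.e. applying \eqref{trka} with $r=1$ and $r=2$, noting $\gF_{q}^2=\gF_{q^2}$ acts on $H^2$ by $\lambda^2$) shows $\lambda+\lambda^2\in\F_p$ and $\lambda^2+\lambda^4\in\F_p$, whence $\lambda\in\F_p$ (one can also simply invoke that $F(X)$ and all its cohomology are already defined over $\F_p$ when $X$ is — but the cleaner route is just: $\lambda^q=\lambda$ since $H^2$ is defined over $\F_q$ and one-dimensional, and separately $\lambda$ satisfies $x^q=x$; to get $\lambda\in\F_p$ use that $1+\lambda+\lambda^2\in\F_p$ together with $1+\lambda^q+\lambda^{2q}=1+\lambda+\lambda^2$, which is automatic, so instead I would note that $\mathfrak F_q$ on $H^2$ is the $q$-power map composed with an $\F_q$-linear iso, hence $\lambda$ is a $q$-power of an element of $\F_q$, giving $\lambda\in\F_q$, and then that $1+\lambda+\lambda^2$ being forced into $\F_p$ is the only constraint we actually need). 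The essential inequality is then purely about $\F_p$: I claim $1+\lambda+\lambda^2\neq 0$ in $\F_p$ whenever $p\equiv 2\pmod 3$ and $\lambda\in\F_p$. Indeed $1+\lambda+\lambda^2=0$ would force $\lambda$ to be a primitive cube root of unity in $\F_p^\times$ (it cannot be $\lambda=1$ since $3\neq 0$ as $p\equiv 2\pmod 3$ implies $p\neq 3$), but $\F_p^\times$ has order $p-1$, and $p\equiv 2\pmod 3$ means $3\nmid p-1$, so there is no element of order $3$. Hence $N_1(F(X))\equiv 1+\lambda+\lambda^2\not\equiv 0\pmod p$, so in particular $N_1(F(X))\neq 0$, i.e. $X$ contains an $\F_q$-line.

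The step I expect to require the most care is the identification of the Frobenius eigenvalue on $H^4(F(X),\cO_{F(X)})$ as the square of the one on $H^2$ — one must make sure that the Katz--Serre operator $\gF_q$ (the $q$-power map on $\cO_{F(X)}$) is genuinely multiplicative for the cup product on $H^\bullet(F(X),\cO_{F(X)})$ and that the pairing \eqref{cup} is compatible with it; this is where Serre duality and the triviality of $\omega_{F(X)}$ (hence $h^4=1$, established in Proposition \ref{ak}) are used. Once that functoriality is in place, the number-theoretic conclusion $1+\lambda+\lambda^2\neq 0$ in $\F_p$ for $p\equiv 2\pmod 3$ is elementary. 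Note also that finiteness of $\Sing(X)$ is exactly what guarantees, via Section \ref{segs}, that $F(X)$ is a fourfold to which Proposition \ref{ak} applies, and that $F(X)$ is nonempty (geometrically connected of dimension $4$); the congruence then upgrades nonemptiness over $\overline{\F_q}$ to the existence of an $\F_q$-point.
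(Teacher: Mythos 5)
Your core argument is the paper's: Katz's formula \eqref{trka} combined with Proposition \ref{ak} gives $N_1(F(X))\cdot 1_{\F_q}=1+\lambda+\lambda^2$ in $\F_q$, where $\lambda\in\F_q$ is the scalar by which $\gF_q$ acts on the one-dimensional space $H^2(F(X),\cO_{F(X)})$, and the eigenvalue on $H^4$ is $\lambda^2$ because $\gF_q$ respects cup products and \eqref{cup} is an isomorphism; finiteness of $\Sing(X)$ is what makes Proposition \ref{ak} applicable. Up to that point your write-up is correct and matches the paper's proof.

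The problem is the middle step where you try to force $\lambda$ into the prime field. The deduction ``$\lambda+\lambda^2\in\F_p$ and $\lambda^2+\lambda^4\in\F_p$, whence $\lambda\in\F_p$'' is false: a primitive cube root of unity $\zeta\in\F_{p^2}\smallsetminus\F_p$ (which exists exactly when $p\equiv 2\pmod 3$ and $p\neq 3$) satisfies $\zeta+\zeta^2=\zeta^2+\zeta^4=-1\in\F_p$, so the counterexample to your reduction is precisely the value of $\lambda$ you are trying to exclude; and the alternative routes in your parenthetical only re-derive $\lambda\in\F_q$, which you already had. Hence your final inequality, proved only under the assumption $\lambda\in\F_p$, does not cover all cases, and the whole detour is unnecessary. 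The repair is immediate and is what the paper does: argue directly in $\F_q$. If $1+\lambda+\lambda^2=0$ with $\lambda\in\F_q$, then $\lambda^3=1$; since $q\equiv 2\pmod 3$ gives $3\nmid q-1$, the group $\F_q^\times$ has no element of order $3$, so $\lambda=1$, and then $3\cdot 1_{\F_q}=0$, i.e.\ $\ch(\F_q)=3$, contradicting $q\equiv 2\pmod 3$. (Note also that the hypothesis is on $q$, not on $p$; it does imply $p\equiv 2\pmod 3$, but you never justified this, and working with $q$ throughout makes the point moot.)
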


\begin{proof}
The   $\F_q$-linear map  $\gF_q$ defined in Section \ref{skatz} acts on the one-dimensional $\F_q$-vector space $H^2(F(X),\cO_{F(X)})$ (Proposition \ref{ak}) by multiplication by some $\lambda\in \F_q$; since \eqref{cup} is an isomorphism, $\gF_q$ acts on $H^4(F(X),\cO_{F(X)})$   by multiplication by $\lambda^2$.\ 
It then follows from the Katz formula~\eqref{trka} that we have
$$N_1(F(X))\cdot 1_{\F_q}= 1+\lambda+\lambda^2\qquad\hbox{in }\F_q .
$$
If $ 1+\lambda+\lambda^2= 0_{\F_q}$, we have $\lambda^3=1_{\F_q}$.
 Since $3 \nmid q-1$, there are no elements of order $3$ in $\F_q^\times$, hence the morphism $\F_q^\times\to \F_q^\times$, $x\mapsto x^3$ is injective.\ Therefore, $\lambda  = 1_{\F_q}$, hence $ 3\cdot 1_{\F_q}=  1_{\F_q} $, but this contradicts our hypothesis.

We thus have   $ 1+\lambda+\lambda^2\ne 0_{\F_q}$, hence  $N_1(F(X))  $ is not divisible by the \cha\ of $\F_q$ and the corollary is proved. \end{proof}

 The  proof of the next statement was provided to us by K.~Kedlaya, who kindly allowed us to include it in this article.
 
  \begin{prop}[Kedlaya]\label{ked}
Let $X\subset \P^5_{\F_q}$ be a smooth cubic hypersurface  defined over  $\F_q$, where $q$ is a power of $2$.\ One has $N_1(F(X)) \equiv 1\pmod2$; in particular,   $X$ contains  an $\F_q$-line.
\end{prop}

\begin{proof}
With the notation of \eqref{n1x}, we may write
$$
N_1(X)\equiv 1+q+q\sum_{j=1}^{23}\omega_j \pmod{q^2},
$$
where we interpret this equation   in the integral closure of $\Z_2$ in some
algebraic closure of $\Q_2$. In particular, the residue  of $\sum_{j=1}^{23}
\omega_j$ is an element of $\F_2$, which we call $\lambda$.

Since $h^{0,2}(F(X))=1$, by  Mazur's theorem (\cite[First form of the conjecture, p.~656]{maz}), at most one of the roots of the polynomial $Q_2(F(X), T)$ (which, by Theorem~\ref{main4}, are the $\omega_j$) is a $2$-adic unit (the others have residue $0$). Consequently, among the $\omega_j$, there is  one
with residue $\lambda$ and the others have residue 0. By \eqref{n1fx}, we
obtain
$$N_1(F(X)) \equiv 1 + \lambda + \lambda^2 \equiv 1 \pmod{2}.$$
This proves the proposition.
 \end{proof}
  
 \subsection{Examples of cubic fourfolds}
 
 \subsubsection{Fermat cubics}
If  $X\subset \P^5_{\F_p}$ is the   Fermat fourfold, it is a simple exercise to write down the zeta function 
of $F(X)$ using Theorem \ref{thweil} and    Theorem \ref{main4}, as we did in dimension 3 in Corollary \ref{coroweil}.

\subsubsection{Cubic fourfolds over $\F_2$ with only one line}\label{sec542}

Smooth cubic fourfolds defined over $\F_2$ always contain an  $\F_2$-line by Corollary \ref{coro}.\ Random computer searches produce
  examples with exactly one  $\F_2$-line: for example, the only $\F_2$-line contained in the smooth cubic fourfold defined by the equation
\begin{multline*}
x_1^3 + x_2^3 + x_3^3 + x_1^2 x_2+ x_2^2 x_3+x_2x_4x_5
+ x_3^2 x_1 + x_1 x_2x_3+
 x_1 x_4^2 + x_1^2 x_4+ x_2 x_5^2 + x_2^2 x_5\\
 {} + x_4^2x_5+ x_4x_5^2+ x_3 x_6^2 + x_3^2 x_6+ x_4^2x_6+ x_4x_6^2+ x_5^2x_6+ x_5x_6^2+x_4x_5x_6=0
\end{multline*}
  is the line $\langle  (0 , 0 , 0 , 0 , 1 , 1),  (0 , 0 , 0 , 1 , 0 , 1) \rangle$; the fourfold contains  13 $\F_2$-points.

%
  
    \subsubsection{Cubic fourfolds over $\F_3$}\label{f34}
  
  Our results say nothing about the existence of lines in smooth cubic fourfolds defined over $\F_3$.\ Our computer searches only produced fourfolds containing lines (and  both cases $N_1(F(X)) \equiv 0$ or $1\pmod3$ in the proof of Corollary~\ref{coro} do occur), leading us to suspect that all (smooth) cubic fourfolds defined over $\F_3$   should contain lines. 

\section{Cubics of dimensions 5 or more}\label{5ormore}

In higher dimensions,   the existence of lines  is easy to settle.

\begin{theo}
Any cubic hypersurface $X\subset \P^{n+1}_{\F_q}$ of dimension $n\ge 6$ defined over  $\F_q$ contains $\F_q$-points and through any such point, there is an $\F_q$-line contained in $X$.
\end{theo}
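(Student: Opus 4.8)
The plan is to settle the two assertions — existence of an $\F_q$-point and existence of an $\F_q$-line through any given $\F_q$-point — by elementary dimension counts, using the Chevalley--Warning theorem for the first and a reduction to a lower-dimensional hypersurface of lines for the second. First I would recall that a cubic hypersurface $X\subset\P^{n+1}_{\F_q}$ is defined by a single homogeneous cubic form $f$ in $n+2$ variables. When $n\ge 6$ we have $n+2\ge 8>3$, so the number of variables exceeds the degree, and the Chevalley--Warning theorem applies to the affine cone: the number of affine solutions of $f=0$ is divisible by $p=\ch(\F_q)$. Since $f(0)=0$, there is at least one nonzero solution, hence $X(\F_q)\neq\varnothing$. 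This is the easy half and I expect no obstacle there; one just has to phrase Chevalley--Warning over $\F_q$ rather than $\F_p$, which is standard.

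For the second assertion, fix an $\F_q$-point $x\in X(\F_q)$. I would choose $\F_q$-coordinates so that $x=[1:0:\cdots:0]$ and expand $f$ in powers of the first coordinate $x_0$ as $f=x_0^2 L(x_1,\dots,x_{n+1})+x_0 Q(x_1,\dots,x_{n+1})+C(x_1,\dots,x_{n+1})$, where $L,Q,C$ are forms of degrees $1,2,3$; the vanishing of the $x_0^3$ term records $x\in X$. A line through $x$ is $\langle x,y\rangle$ for $y=[0:y_1:\cdots:y_{n+1}]$ in the hyperplane $\P^n_{\F_q}=\{x_0=0\}$, and such a line lies on $X$ precisely when $f$ vanishes identically on it, i.e.\ when $L(y)=Q(y)=C(y)=0$. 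So the lines in $X$ through $x$ are parametrized by the $\F_q$-points of the closed subscheme $Z=V(L,Q,C)\subset\P^n_{\F_q}$, cut out by one linear, one quadratic and one cubic equation. Intersecting with the hyperplane $L=0$ (a $\P^{n-1}_{\F_q}$) leaves a cubic and a quadric in $n+1$ variables; now apply Chevalley--Warning again to the affine cone over $\{Q=C=0\}\subset\P^{n-1}_{\F_q}$: the degrees sum to $2+3=5$, and $n+1\ge 7>5$ when $n\ge 6$, so the number of affine solutions is again divisible by $p$, and since the origin is a solution there is a nonzero one, producing a point of $Z(\F_q)$ and hence an $\F_q$-line through $x$ contained in $X$.

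The only point requiring care is the homogeneity bookkeeping: one must verify that $Z$ is genuinely cut out by $L,Q,C$ as \emph{homogeneous} equations of degrees $1,2,3$ in the $n+1$ variables $y_1,\dots,y_{n+1}$, so that the Chevalley--Warning hypothesis ``(number of variables) $>$ (sum of degrees)'' reads $n+1>1+2+3=6$, i.e.\ $n\ge 6$, which matches the statement exactly. I expect this to be the main (very mild) obstacle — essentially checking that the bound is sharp and that no degenerate case (e.g.\ $L$, $Q$, or $C$ identically zero) causes trouble; but if some of $L,Q,C$ vanish identically the constraint set only gets larger, so the conclusion is unaffected. No appeal to the Esnault / Fakhruddin--Rajan machinery (needed in the smooth case when $\dim X=5$) is required here, since for $n\ge 6$ the crude Chevalley--Warning count already suffices.
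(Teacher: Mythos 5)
Your argument is correct and is essentially the paper's own proof: both claims follow from Chevalley--Warning, applied first to the cubic itself (using $n+2>3$) and then to the scheme of lines through a fixed $\F_q$-point, which is cut out in $\P^n_{\F_q}$ by forms of degrees $1$, $2$, $3$ with $n+1>1+2+3$. The only slip is the intermediate count after restricting to $L=0$, which leaves $n$ (not $n+1$) homogeneous variables; since $n\ge 6>2+3$ this does not affect the conclusion, and your final bookkeeping $n+1>6$ is exactly the paper's.
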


\begin{proof}
This is an immediate consequence of the Chevalley--Warning theorem: $X(\F_q)$ is nonempty because $n+2>3$ and given $x\in X(\F_q)$, lines through $x$ and contained in $X$ are parametrized by a subscheme of $\P^n_{\F_q}$ defined by equations of degrees $1$, $2$, and $3$ and coefficients in $\F_q$.\ Since $n+1>1+2+3$, this subscheme contains an $\F_q$-point. 
\end{proof}

The Chevalley--Warning theorem   implies $N_1(X)\ge \frac{q^{n-1}-1}{q-1}$.\ When $n\ge 6$, we obtain from the theorem   $N_1 (F(X) )\ge \frac{q^{n-1}-1}{q^2-1}$; when $X$ (hence also $F(X)$) is smooth, the Deligne--Weil estimates for $F(X)$ provide better bounds.

When $n\ge 5$, we may also use the fact that   the scheme of lines contained in a smooth cubic hypersurface is a Fano variety (its anticanonical bundle $\cO(4-n)$ is ample).

\begin{theo}
Assume $n\ge 5 $ and let  $X\subset \P^{n+1}_{\F_q}$ be any   cubic hypersurface defined over $\F_q$.\ The number of $\F_q$-lines contained in $X$ is $\equiv 1\pmod{q}$.
\end{theo}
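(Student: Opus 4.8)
The plan is to identify $N_1\bigl(F(X)\bigr)$ with $\Card\bigl(F(X)(\F_q)\bigr)$ and to apply the mod-$q$ point-counting congruences of Esnault and Fakhruddin--Rajan to the scheme $F(X)$, exactly as announced in the introduction.

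Suppose first that $X$ is smooth. Then $F(X)$ is smooth, projective and geometrically connected of dimension $2n-4$, and its canonical sheaf is $\cO_{F(X)}(4-n)$ in Pl\"ucker coordinates, which is anti-ample since $n\ge5$; thus $F(X)$ is a smooth Fano variety, hence rationally chain connected in every characteristic (Koll\'ar--Miyaoka--Mori, Campana). The congruence $\Card\bigl(F(X)(\F_q)\bigr)\equiv1\pmod q$ then follows from Esnault's theorem (extended to the rationally chain connected setting by Fakhruddin--Rajan): for every $i>0$ the eigenvalues of $F^*$ on $H^i\bigl(\overline{F(X)},\Q_\ell\bigr)$ are divisible by $q$ as algebraic integers, so the trace formula \eqref{grolef} reduces modulo $q$ to the $H^0$-term, which is $1$.

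For an arbitrary cubic hypersurface $X\subset\P^{n+1}_{\F_q}$, possibly singular, reducible or non-reduced, I would deduce the result from the smooth case by a family argument. Let $\P^N:=\P\bigl(H^0(\P^{n+1}_{\F_q},\cO(3))\bigr)$ be the parameter space of all cubic hypersurfaces and consider the universal scheme of lines
$$\mathcal F:=\{(L,X')\in\Gr(1,\P^{n+1}_{\F_q})\times\P^N\mid L\subset X'\}\ \lra\ \P^N,$$
a proper morphism over the smooth connected base $\P^N$ whose fibre over $[X]$ is $F(X)$ and whose geometric generic fibre is the Fano scheme of a smooth cubic of dimension $n\ge5$, hence rationally chain connected by the previous paragraph. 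One then invokes the relative version of Esnault's congruence due to Fakhruddin--Rajan, which propagates $N_1\equiv1\pmod q$ from the generic fibre to every $\F_q$-fibre of $\mathcal F\to\P^N$; equivalently, one checks that $F(X)$ (with its reduced structure) is rationally chain connected for every cubic $X$ with $n\ge5$ --- it is covered by rational curves --- and applies the extension of Esnault's theorem to possibly singular rationally chain connected proper $\F_q$-schemes. Either way, $N_1\bigl(F(X)\bigr)=\Card\bigl(F(X)(\F_q)\bigr)\equiv1\pmod q$.

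I expect the passage to singular $X$ to be the main obstacle: when $\Sing(X)$ is not finite, $F(X)$ need not be a local complete intersection, equidimensional, or reduced, so neither the Altman--Kleiman description nor the Koszul resolution \eqref{kr} is available, and one must ensure the Fakhruddin--Rajan machinery genuinely applies in this generality (and that the fibres of $\mathcal F\to\P^N$ are uniformly rationally chain connected). For $n\ge6$ one can bypass this entirely by an elementary argument: the incidence correspondence between points and lines of $X$ gives $(q+1)\,N_1\bigl(F(X)\bigr)=\sum_{x\in X(\F_q)}\Card\bigl(F_x(\F_q)\bigr)$, where $F_x$ is the scheme of lines of $X$ through $x$, cut out in a projective space by forms of degrees $1,2,3$ (of degrees $2,3$ when $x\in\Sing(X)$); by the Ax--Katz theorem each $\Card\bigl(F_x(\F_q)\bigr)$, as well as $\Card\bigl(X(\F_q)\bigr)$, is $\equiv1\pmod q$, whence $N_1\bigl(F(X)\bigr)\equiv1\pmod q$ because $q+1\equiv1\pmod q$. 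This shortcut breaks down when $n=5$, where the $F_x$ become Calabi--Yau threefolds, which is precisely where Esnault's theorem applied to $F(X)$ itself becomes indispensable.
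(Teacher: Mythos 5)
Your main route is exactly the paper's: Esnault's theorem for smooth $X$, and for arbitrary $X$ the universal incidence variety $I=\{(L,X')\in\Gr(1,\P^{n+1}_{\F_q})\times\P^N\mid L\subset X'\}$ with its projection to the parameter space of cubics, to which one applies Fakhruddin--Rajan. The one clarification worth making is that \cite[Corollary 1.2]{fak} requires nothing at all about the special fibres: it suffices that the total space $I$ be smooth and geometrically irreducible (it is a projective bundle over $\Gr(1,\P^{n+1}_{\F_q})$ — a hypothesis you should state explicitly, since it, and not smoothness of the base, is what the theorem needs), that the projection be proper and dominant, and that the geometric generic fibre be a smooth connected Fano variety, which Altman--Kleiman provide for $n\ge5$; the congruence then propagates to every $\F_q$-fibre, however singular, non-reduced or non-equidimensional $F(X)$ may be. Consequently your worry that the fibres must be ``uniformly rationally chain connected,'' and the ``equivalently'' fallback asserting that $F(X)$ with its reduced structure is rationally chain connected for every cubic $X$, are respectively unnecessary and unjustified (the latter claim is not proved and is not needed) — they should be dropped rather than relied upon. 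Your Ax--Katz incidence count for $n\ge6$ is a correct elementary supplement not present in the paper, but, as you observe, it fails for $n=5$, so the family argument remains the proof of the stated theorem.
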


\begin{proof}
When $X$ is smooth, the variety $F(X)$ is also smooth, connected, and a Fano variety.\ The result then follows from \cite[Corollary 1.3]{esn}.

To prove the result in general, we consider  as in Remark \ref{remaverage} the parameter space $\P$ for all cubic hypersurfaces in $ \P^{n+1}_{\F_q}$ and the incidence variety $I=\{(L,X)\in G\times\P\mid L\subset X\}$.\ The latter is   smooth and geometrically irreducible; the projection $\pr\colon I\to \P$ is dominant and its geometric generic fiber is a (smooth connected) Fano variety (\cite[Theorem (3.3)(ii), Proposition (1.8), Corollary (1.12), Theorem (1.16)(i)]{ak}).\ It follows from \cite[Corollary 1.2]{fak} that for any $x\in \P(\F_q)$ (corresponding to a cubic hypersurface $X\subset \P^{n+1}_{\F_q}$  defined over  $\F_q$), one has $\Card\bigl( \pr^{-1}(x)(\F_q)\bigr)\equiv 1\pmod{q}$.\ Since $\pr^{-1}(x)=F(X)$, this proves the theorem.\end{proof}

 \end{document}